%
\documentclass[runningheads]{llncs}

\usepackage{amssymb,latexsym}
\usepackage{amsmath}
\usepackage{graphicx}
\usepackage{hyperref}
\usepackage{titletoc}
\usepackage{cite}
\numberwithin{equation}{section}

\newtheorem{prop}[theorem]{Proposition}

\renewcommand{\P}{\mathbb P}
\newcommand{\Z}{\mathbb Z}

\newcommand{\EE}{\mathcal E}

\newcommand{\N}{\mathbb N}

\newcommand{\dbf}{\mathbf{d}}
\newcommand{\T}{\mathbb{T}}
\newcommand{\Torus}{\mathbb{T}}
\newcommand{\Tl}{\mathfrak{T}}
\newcommand{\Tgw}{\mathrm{T}}
\newcommand{\tbf}{\mathbf{t}}
\newcommand{\Tbf}{\mathbf{T}}
\newcommand{\sbr}{\mathbf{s}}
\newcommand{\cI}{\mathcal{I}}
\newcommand{\cV}{\mathcal{V}}
\newcommand{\Pl}{\Xi}
\newcommand{\dTV}{\mathbf{d}_{\mathrm{TV}}}
\newcommand{\dout}{\mathbf{d}_\mathrm{o}}

\newcommand{\ssubset}{\subset\subset}
\newcommand{\Capa}{\mathrm{Cap}}
\newcommand{\BCap}{\mathrm{BCap}}
\newcommand{\Es}{\mathrm{Es}}
\newcommand{\Esc}{\mathrm{Esc}}

\newcommand{\Snake}{\mathcal S}

\newcommand{\dist}{\rho}

\newcommand{\musb}{\tilde{\mu}}
\newcommand{\mumo}{\mu_{-1}}
\newcommand{\degree}{\mathbf{d}}
\newcommand{\tc}{\Pi_{\mathrm{c}}}
\newcommand{\tv}{\Pi_{\mathrm{v}}}

\newcommand{\eqqc}{\cong_{\mathrm{c}}}

\newcommand{\shiftc}{\tau_{\mathrm{c}}}
\newcommand{\shiftv}{\tau_{\mathrm{v}}}

\newcommand{\shiftsc}{\tau^*_{\mathrm{c}}}
\newcommand{\shiftsv}{\tau^*_{\mathrm{v}}}
\newcommand{\stc}{\Pi^*_{\mathrm{c}}}
\newcommand{\stv}{\Pi^*_{\mathrm{v}}}
\newcommand{\nuc}{\nu_{\mathrm{c}}}
\newcommand{\nuv}{\nu_{\mathrm{v}}}
\newcommand{\Tfini}{\mathbb{T}_\mathrm{f}}
\newcommand{\Tinf}{\mathbb{T}_\infty}
\newcommand{\Tinfv}{\mathbb{T}'_{\infty}}
\newcommand{\Tinfs}{\mathbb{T}^*_\infty}
\newcommand{\Tinfsv}{\mathbb{T}^{\star}_\infty}
\newcommand{\Shift}{\tau}
\newcommand{\cc}{s}
\newcommand{\Bbs}{\mathbf{B}}
\newcommand{\Comp}{\mathrm{Comp}}

\begin{document}
\title{Branching interlacements and tree-indexed random walks in tori\thanks{The author is supported by ISF grant 1207/15 and ERC starting grant 676970}}
%
%
\author{Qingsan Zhu\inst{1}}
\authorrunning{Qingsan Zhu}
%
\institute{School of Mathematical Sciences, Tel Aviv University, Tel Aviv, 69978 ISRAEL
\email{qingsanz@mail.tau.ac.il}}
\maketitle              
\begin{abstract}
We introduce a model of branching interlacements for general critical offspring distributions. It consists of a countable collection of infinite tree-indexed random walk trajectories on $\Z^d$, $d\geq 5$. We show that this model turns out to be the local limit of the tree-indexed random walk in a discrete torus, conditioned on the size proportional to the volume of the torus. This generalizes the previous results of Angel, R\'{a}th and the author, for the critical geometric offspring distribution. Our model also includes the model of random interlacements introduced by Sznitman as a degenerate case.

To obtain the local convergence, we establish results on decomposing large random trees into small trees, local limits of random trees around a prefixed vertex, and asymptotics of the visiting probability of a set by a tree-indexed random walk with a given size in a torus. These auxiliary results are interesting in themselves. As another application, we show that when $d\geq 5$ the cover time of a $d$-dimensional torus of side-length $N$ by tree-indexed random walks is concentrated at $N^d\log N^d/\BCap(\{0\})$.

\keywords{Branching interlacements \and tree-indexed random walk  \and branching random walk \and local limit \and Galton-Watson tree \and cover time \and branching capacity.}
\end{abstract}
\section{Introduction}
This article introduces a model of branching interlacements which consists of a countable collection of infinite tree-indexed random walk trajectories on $\Z^d,d\geq5$. A non-negative parameter $u$ measures the amount of trajectories that enter the picture. The union of the range of these tree-indexed random walk trajectories defines the branching interlacement at level $u$. It is an infinite translation invariant random subset of $\Z^d$. This model is of special interest since it offers a microscopic description of the structure left by tree-indexed random walks. In fact, we show that the branching interlacement at level $u$ does appear as the local limiting distribution of the trace of the tree-indexed random walk in a large discrete torus, with a size proportional to the volume of the torus.

Let us first recall a similar model, the random interlacements model, which was introduced by Sznitman in \cite{S10V}. This model describes the local picture left by the trace of a random walk in a discrete torus when it runs up to times proportional to the volume of the torus. Roughly speaking, the model of random interlacements can be constructed via a Poisson point process with intensity measure $u\nu_0$. The measure $\nu_0$ is supported on the space of doubly infinite nearest-neighbour trajectories modulo time shifts on $\Z^d,d\geq3$. The parameter $u\geq 0$, referred to as the level, measures the amount of trajectories that enter the picture. The union of the range of trajectories contained in the support of this Poisson point process defines the random subset $I^u$ of $\Z^d$, called the random interlacement at level $u$. Its law can be characterized as the unique distribution on $\{0,1\}^{\Z^d}$ such that
\begin{equation}\label{RI-Cap}
P[I^u\cap K=\emptyset]=\exp(-u\cdot\Capa(K)), \mathrm{~for~every~finite~}K\subseteq \Z^d,
\end{equation}
where $\Capa(K)$ is the (discrete) capacity of $K$. In addition, there is an equivalent way to construct a set with the same law as $I^u\cap K$.
\begin{property}
   Let $N_K$ be a Poisson random variable with parameter $u\cdot\Capa(K)$, and $(X^j)_{j\geq 1}$ be i.i.d.\ random walks with the harmonic measure from infinity of $K$ as the initial measure. Then $K\cap\left(\cup_{j=1}^{N_K}\mathrm{Range}(X^j)\right)$ has the same distribution as $I^u\cap K$.
\end{property}

On the other hand, branching interlacements (only) for the critical geometric offspring distribution were constructed in \cite{ARZ15} (or see \cite{ZThesis}). The construction is essentially based on the fact that the so-called contour walk of the Galton-Watson tree with the critical offspring distribution is just the simple random walk which is Markovian. This article extends this construction to general critical offspring distributions, by introducing two shift transformations and the corresponding invariant measures on a certain set of infinite trees.

Before entering the formal construction of our model, let us emphasize that either of the two characterizations above for random interlacements can be used to characterize the branching interlacement at level $u$, except that we need to use the corresponding subjects for branching random walks. Branching capacity $\BCap(K)$ and the corresponding `harmonic (entering) measure from infinity' $m_K$ for any finite subset $K\ssubset \Z^d$ were introduced by the author in \cite{Z161} (or see \cite{ZThesis}). They are analogues to capacity and harmonic measure of random walk, in the setting of critical branching random walks. We write $K\ssubset \Z^d$ to indicate that $K$ is a finite subset of $\Z^d$.
\begin{theorem}
For any $u>0$, we can define a random subset of $\Z^d$,  called the branching interlacement at level $u$ and denoted by $\cI^u$. Its law is characterized by
\begin{equation}\label{BI-Bcap}
P[\cI^u\cap K=\emptyset]=\exp(-u\cdot\BCap(K)), \mathrm{~for~any~}K\ssubset \Z^d.
\end{equation}
\end{theorem}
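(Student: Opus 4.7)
The plan is to mirror Sznitman's construction, with the simple random walk replaced by the tree-indexed random walk. The set $\cI^u$ will be defined as the union of the ranges of the atoms of a Poisson point process $\omega$ on a space of equivalence classes of infinite tree-indexed trajectories on $\Z^d$, with intensity $u\nu$ for an appropriate $\sigma$-finite measure $\nu$. The identity \eqref{BI-Bcap} will then be the void-probability formula for $\omega$ restricted to trajectories whose range meets $K$. Uniqueness of the law satisfying \eqref{BI-Bcap} is soft: by inclusion--exclusion the quantities $P[\cI\cap K=\emptyset]$ for $K\ssubset\Z^d$ determine $P[A\cap\cI=\emptyset,\,B\subset\cI]$ for all disjoint finite $A,B$, and hence the distribution on $\{0,1\}^{\Z^d}$.

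The first, and most delicate, step is to construct $\nu$. Following the hint in the introduction, one works on a space of infinite trees labelled by $\Z^d$, using the two shift transformations $\shiftc$ and $\shiftv$ (corner shift and vertex shift) together with their invariant measures to quotient out the arbitrary choice of root. Because for a general critical offspring distribution the contour walk is no longer the simple random walk, the approach of \cite{ARZ15} has to be replaced by one intrinsic to the tree: build the shift-invariant measure on two-sided infinite trees from a size-biased Galton--Watson measure with an infinite spine, and then put i.i.d.\ step distributions on the edges.

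The core computation is then the following description of the restriction $\nu_K$ of $\nu$ to equivalence classes whose labelled range meets $K$. I would show that $\nu_K$ is a finite measure with
\[
\nu_K\bigl(\{\text{range meets }K\}\bigr) \;=\; \BCap(K),
\]
and that the normalized measure $\nu_K/\BCap(K)$ can be realized by picking a starting vertex from the harmonic entering measure $m_K$ from \cite{Z161} and running two independent tree-indexed walks in the ``past'' and ``future'' directions, the future one conditioned never to re-enter $K$. This is the branching analogue of the Property stated for random interlacements, and matching the mass with $\BCap(K)$ uses precisely the definition of branching capacity. Compatibility $\nu_K = \nu_{K'}\big|_{\{\text{range meets }K\}}$ for $K\subset K'$ is immediate, so $\nu$ is well defined and the Poisson process $\omega$ with intensity $u\nu$ exists.

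Given $\omega$, set $\cI^u = \bigcup_{\gamma\in\omega} \mathrm{Range}(\gamma)$. The Poisson void formula then yields
\[
P[\cI^u\cap K=\emptyset] \;=\; \exp\bigl(-u\,\nu_K(\{\text{range meets }K\})\bigr) \;=\; \exp\bigl(-u\,\BCap(K)\bigr),
\]
which is \eqref{BI-Bcap}. The main obstacle is the third paragraph: exhibiting an explicit shift-invariant $\sigma$-finite measure on tree-indexed trajectories for an arbitrary critical offspring distribution whose mass on $\{\text{range meets }K\}$ is exactly $\BCap(K)$. In \cite{ARZ15} this relied on the Markov property of the contour walk, which is not available here, so the identification must be carried out directly in terms of the tree geometry and the ``harmonic entering measure from infinity'' machinery of \cite{Z161}. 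Once this step is done, all remaining Poisson-process manipulations are routine.
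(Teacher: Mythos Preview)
Your outline matches the paper's Section~2 closely: build the shift-invariant family $(P^x_{\mathrm v})_{x\in\Z^d}$ on spatial trees in $\Tinfsv$ via the size-biased spine construction, pass to equivalence classes modulo the vertex shift to obtain $\nu$, identify $\nu(\overline W_K)=\BCap(K)$ through the escape probabilities of \cite{Z161}, and apply the Poisson void formula. Uniqueness by inclusion--exclusion is also how the paper argues.

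There is one genuine gap. You write that ``compatibility $\nu_K=\nu_{K'}|_{\{\text{range meets }K\}}$ for $K\subset K'$ is immediate''. It is not. The shift-invariant measure $\sum_x P^x_{\mathrm v}$ lives on $W$, not on the quotient $\overline W=W/\!\cong$; since each equivalence class is countably infinite, one cannot simply push it forward by $\pi$. The paper instead defines $Q_K(\bullet)=\sum_{x\in K}P^x_{\mathrm v}(\bullet\cap W_K^0)$, i.e.\ it selects in each class meeting $K$ the representative with entrance time $H_K=0$, and then \emph{proves} (Theorem~\ref{pro_consis}) that for $K\subset K'$ the measures $Q_K$ and $Q_{K'}$ agree on $\pi^{-1}(\overline W_K)$, by partitioning according to the entrance points and times of $K$ and $K'$ and using shift-invariance to match each piece. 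Only then is $\nu$ well defined on $\overline W$. So your ``immediate'' should become ``follows from shift-invariance via a short but non-vacuous computation''.

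A second imprecision: your description of $\nu_K/\BCap(K)$ as ``pick a start from $m_K$, run independent past and future tree-indexed walks, the future conditioned not to re-enter $K$'' is borrowed from the random-walk case, where the two halves of a bi-infinite path are conditionally independent given the current position. For a tree-indexed walk the negative- and positive-index parts under the vertex shift are two portions of a single spatial tree sharing spine structure; they are not cleanly independent in that sense. The paper avoids this by working directly with the event $W_K^0$ rather than with a past/future decomposition; the link to $m_K$ appears only afterwards as a consequence (Remark~\ref{rm1}.9), not as the definition.
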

Similarly, we have the following.
\begin{prop}\label{BI-mK}
   Let $N_K$ be a Poisson random variable with parameter $u\cdot\BCap(K)$, and $(X^j)_{j\geq 1}$ be i.i.d.\ branching random walks with $m_K$ as the initial measure. Then $K\cap\left(\cup_{j=1}^{N_K}\mathrm{Range}(X^j)\right)$ has the same distribution as $\cI^u\cap K$.
\end{prop}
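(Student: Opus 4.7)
The plan is to use the characterization (\ref{BI-Bcap}) of $\cI^u$ stated in the preceding theorem, together with elementary Poisson functional calculus. Since both $\cI^u\cap K$ and $\mathcal{J}^u := K\cap\bigcup_{j=1}^{N_K}\mathrm{Range}(X^j)$ are random subsets of the finite set $K$, their distributions are determined by the family of avoidance probabilities $P[\,\cdot\,\cap A=\emptyset]$ with $A$ ranging over subsets of $K$. It therefore suffices to prove $P[\mathcal{J}^u\cap A=\emptyset]=\exp(-u\cdot\BCap(A))$ for every such $A$.

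First I would compute the avoidance probability of $\mathcal{J}^u$. Writing $p_A := P_{m_K}[X\text{ visits }A]$ for the probability that a branching random walk $X$ whose root is distributed as $m_K$ visits $A$, and using that the $X^j$ are i.i.d.\ and independent of $N_K$, the standard conditioning on $N_K=n$ followed by summation against the Poisson$(u\BCap(K))$ law yields
$$P[\mathcal{J}^u\cap A=\emptyset]=E\bigl[(1-p_A)^{N_K}\bigr]=\exp\bigl(-u\cdot\BCap(K)\cdot p_A\bigr).$$

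The second step is to identify $\BCap(K)\cdot p_A=\BCap(A)$ for every $A\subseteq K$. This is the branching analogue of the classical identity $\Capa(K)\cdot P_{m_K^{\mathrm{SRW}}}[\text{SRW visits }A]=\Capa(A)$. I expect it to be read off from the properties of branching capacity and the harmonic measure from infinity developed in \cite{Z161}: viewing $m_K$ as the $y\to\infty$ limit of the conditional entering distribution in $K$ of a branching random walk started at $y$, and decomposing such a trajectory at the first hit of $K$ via the Markov/regeneration structure of tree-indexed walks, one obtains
$$P_y[X\text{ visits }A]= P_y[X\text{ hits }K]\cdot P_{m_K}[X\text{ visits }A]+\text{lower order},$$
which, combined with the tail asymptotics $P_y[X\text{ hits }B]\sim g(y)\BCap(B)$ (valid uniformly in finite $B$), produces the displayed identity.

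Combining the two steps gives $P[\mathcal{J}^u\cap A=\emptyset]=\exp(-u\BCap(A))=P[\cI^u\cap A=\emptyset]$, which proves the proposition. The main obstacle is the identity $\BCap(K)\cdot p_A=\BCap(A)$: it depends on a Markov-type decomposition at the first hit of $K$ for a branching random walk, which is subtler than in the ordinary random-walk setting because after the first hit of $K$ an entire residual subtree still has to evolve and can revisit $A$. All the required machinery is, however, already developed in \cite{Z161}; once it is invoked, the remainder of the argument is the short Poisson computation above.
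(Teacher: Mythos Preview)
The paper does not give a full proof of this proposition; it is dispatched in one sentence in Remark~\ref{rm1}(9), and the route there is different from yours. The paper works directly with the Poisson point process: restricting to $\overline{W}_K$ yields a Poisson$\bigl(u\,\BCap(K)\bigr)$ number of i.i.d.\ trajectories with law $\BCap(K)^{-1}\sum_{x\in K}\Es_K(x)\,P^{\mathrm v}_x[\,\cdot\mid H_K=0]$; since the past of each such trajectory avoids $K$ by construction, $\cI^u\cap K$ is already exhibited as the $K$-trace of $N_K$ i.i.d.\ tree-indexed walks, and it remains only to identify the entering law with $m_K$ by quoting \cite{Z161}. No avoidance-probability computation for subsets $A\subseteq K$ is needed.

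Your approach is legitimate in outline and reduces the proposition to the sweeping identity $\BCap(K)\,p_A=\BCap(A)$ for every $A\subseteq K$, with $p_A=P_{m_K}[\text{BRW visits }A]$. The weak point is your justification of that identity. Decomposing a branching random walk from a distant point $y$ at its first (depth-first) hit of $K$ does \emph{not} leave a fresh branching random walk from the entrance point: the ancestral line of the hitting vertex still carries unexplored younger-sibling subtrees, and those rooted near $K$ contribute to the event $\{\text{visits }A\}$ with probability bounded away from zero even as $y\to\infty$. Consequently the displayed relation $P_y[\text{visits }A]\approx P_y[\text{hits }K]\cdot P_{m_K}[\text{BRW visits }A]$ does not follow from any Markov-type property, and the hand-off to \cite{Z161} does not close the gap unless that reference proves the sweeping identity by some other means. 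The paper's Poisson-thinning route sidesteps this issue entirely, because it never needs to decompose a finite branching random walk at a stopping time: the i.i.d.\ pieces come for free from the point-process structure.
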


We now describe our model. The model of branching interlacements is constructed via a Poisson point process on the space of infinite tree-indexed random walks modulo an equivalence relation. Hence, the key step for the construction is to produce a measure on that space as the intensity measure.

We consider tree-indexed random walks, also called random spatial trees or branching random walks. Fix a probability measure $\mu$ on $\N$ which serves as the offspring distribution (for the random tree mechanism) and a probability measure $\theta$ on $\Z^d$ which serves as the jump distribution (for the random walk mechanism). We assume that $\mu$ is nondegenerate (i.e.\ not the Dirac mass at $1$) with mean one and finite variance, and that $\theta$ is with mean zero and finite range, not supported on any strict subgroup of $\Z^d$. First, we introduce a probability measure $\tc$ on the space of infinite rooted ordered trees. In fact, this measure is supported on the subset, denoted by $\Tinf$, of those trees which have a unique infinite ray from the root. Such infinite rays are called spines. The measure $\tc$ can be uniquely determined by the following property. For any $\tbf\in\Tinf$ and a non-root vertex $v_0$ of $\tbf$ in the spine, the probability under $\tc$ of being equal to $\tbf$ up to vertex $v_0$ is
\begin{equation}\label{tc-k}
\frac{1}{2}\prod_{v}\mu(\degree(v)-1),
\end{equation}
The product is over all vertices that are in the same component of the root, when deleting $v_0$; $\degree(v)$ is the degree of $v$; $1/2$ is used for a normalization, i.e.\ to make $\tc$ a probability measure. We still need to define an equivalence relation, denoted by $\eqqc$. Informally, $\tbf_1\eqqc \tbf_2$ if and only if $\tbf_2$ is $\tbf_1$ re-rooted at another corner. Figure 1 explains how to re-root a tree at different corners. A key feature of $\tc$ is that this measure is invariant under the corner shift, which can be easily verified using \eqref{tc-k}.
\begin{figure*}[h]
	\includegraphics[width=4.9in]{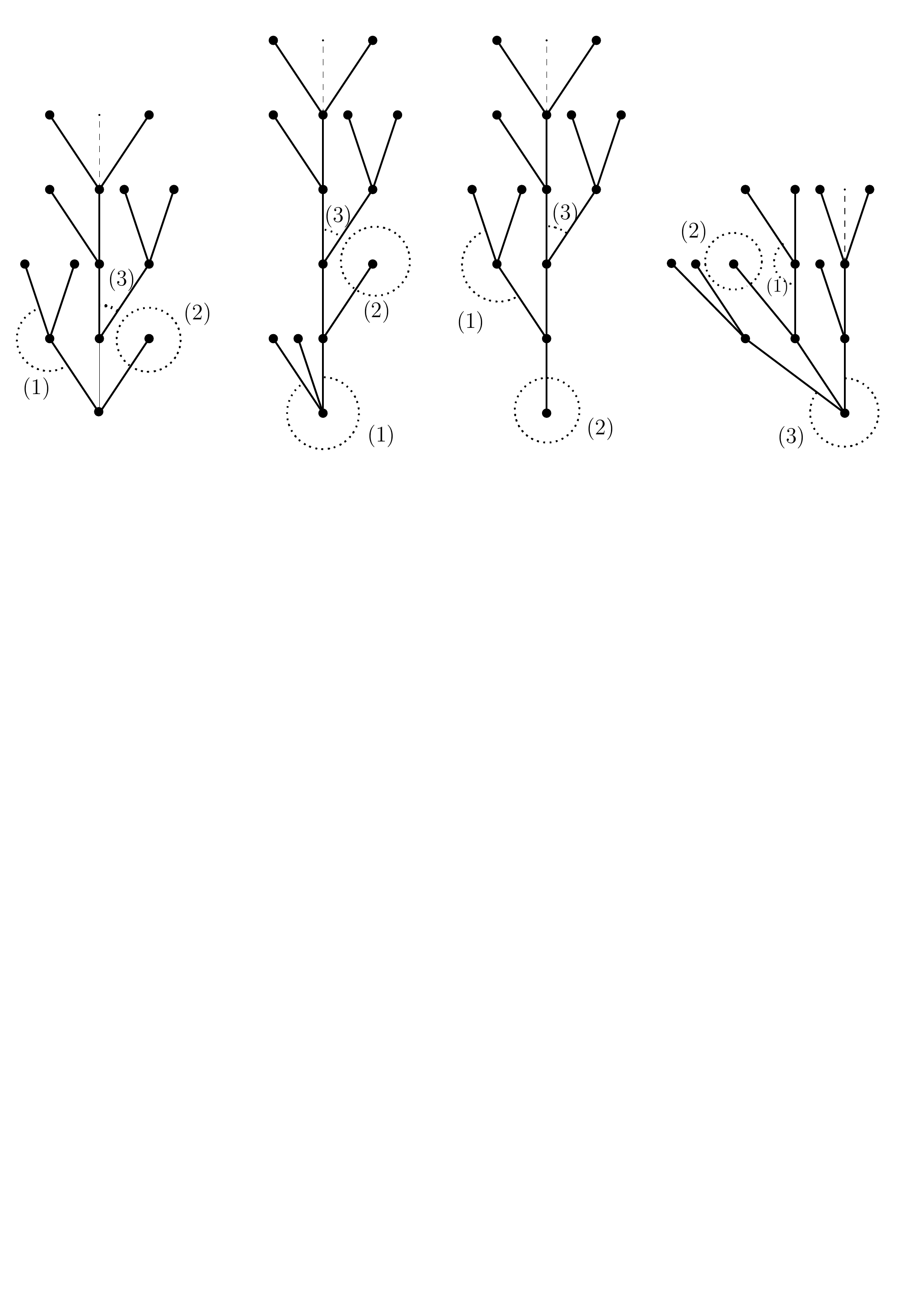}
	\caption{Re-root a tree at different corners}
\end{figure*}

We now introduce a measure on the set of spatial trees. Let $\Tinfs$ be the set of all pairs $(\tbf,\Snake_\tbf)$, where $\tbf\in\Tinf$ and $\Snake_\tbf:\tbf\rightarrow \Z^d$ is a map from the vertex set $\tbf$ into $\Z^d$. For simplicity, we also write $\tbf$ for the vertex set of tree $\tbf$. Similarly, we can define a measure $\stc$ as follows. For any $(\tbf,\Snake_\tbf)\in\Tinfs$ and one non-root vertex $v_0$ of $\tbf$ in the spine, the total measure, under $\stc$, of the set of all those spatial trees that are equal to $(\tbf,\Snake_\tbf)$ up to $v_0$ is
\begin{equation}\label{stc-k}
\frac{1}{2}\prod_{v}\mu(\degree(v)-1)\prod_{(v,v')}\theta(\Snake_\tbf(v')-\Snake_\tbf(v)).
\end{equation}
The first product is, as before, over all vertices that are in the same component as the root, when deleting $v_0$; the second product is over all edges at this component, oriented in such a way that $v$ is closer to $v_0$ than $v'$. Note that $\stc$ is not a probability measure, but a $\sigma$-finite measure. The mass of the set of all spatial trees that send the root to a prefixed point is one. Similarly, we can define an equivalence relation between spatial trees, also denoted by $\eqqc$. $(\tbf_1,\Snake_{\tbf_1})\eqqc(\tbf_2,\Snake_{\tbf_2})$ if and only if $\tbf_1\eqqc \tbf_2$ and as maps, $\Snake_{\tbf_1}=\Snake_{\tbf_2}$. Note that when $\tbf_1\eqqc \tbf_2$, $\tbf_2$ has the same vertex set as $\tbf_1$ via a graph isomorphism. Therefore, $\Snake_{\tbf_1}$ and $\Snake_{\tbf_2}$ will have the same domain under this isomorphism. Similarly to the tree case, since both products are independent of the choice of the corner rooted, one can see that $\stc$ is invariant under the corner shift.

Due to this invariance property, we can obtain a measure $\nuc$ on $\overline{W}=\Tinfs/\eqqc$, induced by $\stc$. The branching interlacement at level $u$ is governed by a Poisson point process $\omega=\sum_{i\geq 0}\delta_{\overline{w}_i}$ on $\overline{W}$, with intensity measure $2u\nuc$. Denote by $\P^u$ the law which turns $\omega$ into a Poisson point process with intensity $2u\nuc$. Then the branching interlacement at level $u$ is defined by
$$
\cI^u(\omega)=\bigcup_{i\geq 0}\mathrm{Range}(\overline{w}_i),\;\mathrm{if}\,\omega=\sum_{i\geq 0}\delta_{\overline{w}_i},
$$
where $\mathrm{Range}(\overline{w}_i)$ is the range of any element in the equivalence class of $\overline{w}_i$. The vacant set at level $u$ is $\cV^u=\Z^d\setminus\cI^u$.

\begin{remark}
The reason why we put the constant factor ``$2$'' in the intensity $2u\nuc$ is to make \eqref{BI-Bcap} consistent with \eqref{RI-Cap}. In fact, for any  $A\ssubset\Z^d$, $\nuc(\overline{W}_A)=\BCap(A)/2$, where $\overline{W}_A$ is the set of all equivalence classes of those spatial trees that intersect $A$, cf.\ \eqref{def-WA}. Note that $\P^u[\cI^u \cap A=\emptyset]=\exp(-2u\nuc(\overline{W}_A))$ then \eqref{BI-Bcap} follows. Moreover, we will define another invariant measure $\stv$ on (a proper subset of) $\Tinfs$, and thus obtain $\nuv$ on the space of equivalence classes of spatial trees. It turns out that $\nuv(\overline{W}_A)=2\nuc(\overline{W}_A)$ for any $A\ssubset \Z^d$. Hence, using $\nuv$ instead of $2\nuc$, we also obtain the branching interlacement set with the same distribution, cf.\ Section 2. The factor ``$2$'' comes from the fact that on average, a vertex in a large tree has two corners.
\end{remark}

Until now, we have not used the assumption that $d\geq 5$. In fact, we need this assumption to guarantee that when $u>0$ $\cI^u$ is nontrivial, i.e.\ a nonempty proper subset of $\Z^d$. It is showed in \cite{Z162} that the spatial tree with law $\stc$ (at least when $\mu$ has finite third moment and $\theta$ is symmetric) almost surely visits every vertex on $\Z^d$ for $d\leq 4$. On the other hand, on $\Z^d,d\geq 5$, every finite subset is visited finitely many times almost surely. It is worth pointing out that most of our construction can be applied to a transient tree-indexed random walk attached to an infinite locally finite connected graph, see Remark~\ref{rm1}.10.

Similarly to random interlacements, the branching interlacement set $\cI^u$ presents a long range dependence:
$$
\mathrm{Cov}(1_{x\in \cI^u},1_{y\in \cI^u})\sim \frac{cu}{\|x-y\|^{d-4}}, \mathrm{~as~} \|x-y\|\rightarrow \infty.
$$

We now turn to our main result that the branching interlacement gives the local limit of the tree-indexed random walk in a large torus, conditioned on the size proportional to the volume of the torus. By a tree-indexed random walk with size $n$ and starting point $x\in\Z^d$, we mean a random spatial tree $(\tbf,\Snake_\tbf)$ as follows. First, $\tbf$ is a Galton-Watson tree with offspring distribution $\mu$, conditioned on the total number of vertices being $n$. Second, conditionally on $\tbf$, the conditional probability weight of $\Snake_\tbf$ is determined by
$$
1_{\Snake_\tbf(o)=x}\cdot\prod_{(v,v')}\theta(\Snake_\tbf(v')-\Snake_\tbf(v)),
$$
where $o$ is the root and the product is over all edges, oriented in such a way that $v$ is closer to the root than $v'$. In fact, we will assume that $\theta$ is symmetric for the local limit result, and then the orientation is not needed. Denote by $\varphi=\varphi_N:\Z^d\rightarrow \Torus_N=(\Z/N\Z)^d$ the canonical projection map induced by $\mathrm{mod}~N$. Via this $\varphi$, one can define the tree-indexed random walk in $\Torus_N$ with size $n$ and a uniform starting point in an obvious way. In Theorem~\ref{ll-thm-main}, we show that assuming further that $\mu$ has finite exponential moments and that $\theta$ is symmetric and aperiodic, for any $K\ssubset \Z^d$, $u\in(0,\infty)$ and $n=n(N)$, an integer-valued function satisfying $\lim_{N\rightarrow \infty}n(N)/N^d=u$,
$$
\lim_{N\rightarrow \infty}P[\mathrm{Range}(\Snake^{n,N})\cap \varphi(K)=\emptyset]=\exp(-u\BCap(K)),
$$
where $\Snake^{n,N}$ is a tree-indexed random walk in $\Torus_N$ with size $n$ and a uniform starting point. By the inclusion-exclusion principle, this result implies the local convergence of the trace left by a tree-indexed random walk.

\begin{theorem}\label{thm-lc}
Under the same assumptions above, for any $A\subset K\ssubset\Z^d$, we have
$$
\lim_{N\rightarrow \infty}P[\mathrm{Range}(\Snake^{n,N})\cap \varphi(K)=\varphi(A)]=P[\cI^u\cap K=A].
$$
\end{theorem}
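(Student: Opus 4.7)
The plan is to derive Theorem~\ref{thm-lc} from Theorem~\ref{ll-thm-main} by a two-step application of inclusion-exclusion. Since $K\ssubset\Z^d$ is finite, the projection $\varphi=\varphi_N$ is injective on $K$ for all $N$ large enough, so we may identify subsets of $\varphi(K)$ with subsets of $K$ and view $S_N\DeFine\mathrm{Range}(\Snake^{n,N})\cap\varphi(K)$ and $S\DeFine\cI^u\cap K$ as random subsets of the same finite set $K$. Proving convergence in distribution of $S_N$ to $S$ then amounts to showing pointwise convergence of the probability mass functions, which is exactly what the theorem asks for.

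The first step is to reduce containment-based probabilities to the non-hitting probabilities furnished by Theorem~\ref{ll-thm-main}. For any $C\subset K$, expanding the product $\prod_{x\in C}\bigl(1-1_{\{x\notin S_N\}}\bigr)$ and taking expectations gives
$$P[C\subset S_N]=\sum_{D\subset C}(-1)^{|D|}\,P\bigl[\mathrm{Range}(\Snake^{n,N})\cap\varphi(D)=\emptyset\bigr].$$
Since this sum has only $2^{|C|}$ terms, Theorem~\ref{ll-thm-main} yields
$$\lim_{N\to\infty}P[C\subset S_N]=\sum_{D\subset C}(-1)^{|D|}\exp\bigl(-u\cdot\BCap(D)\bigr).$$

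The second step is to expand the event $\{S_N=A\}$ into containments. Writing $\{S_N=A\}=\{A\subset S_N\}\cap\bigcap_{x\in K\setminus A}\{x\notin S_N\}$ and again expanding indicators,
$$P[S_N=A]=\sum_{B\subset K\setminus A}(-1)^{|B|}\,P[A\cup B\subset S_N].$$
Combining with the first step gives
$$\lim_{N\to\infty}P[S_N=A]=\sum_{B\subset K\setminus A}(-1)^{|B|}\sum_{D\subset A\cup B}(-1)^{|D|}\exp\bigl(-u\cdot\BCap(D)\bigr).$$
The same two inclusion-exclusion identities apply verbatim with $S$ in place of $S_N$, now using the characterization \eqref{BI-Bcap} of the branching interlacement, so that $P[\cI^u\cap D=\emptyset]=\exp(-u\BCap(D))$ plays the role of the limiting non-hitting probability. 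Hence $P[S=A]$ equals the same double sum, which completes the proof.

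All the analytical content is concentrated in Theorem~\ref{ll-thm-main}; the present deduction is purely combinatorial. The only point that needs to be checked is the injectivity of $\varphi|_K$ for $N$ large, which is trivial since $K$ is finite, so no genuine obstacle arises in this step — the main difficulty of the paper lies in proving the hitting asymptotics rather than in this corollary.
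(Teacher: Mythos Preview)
Your proof is correct and follows essentially the same approach as the paper, which simply invokes the inclusion-exclusion principle to pass from Theorem~\ref{ll-thm-main} to Theorem~\ref{thm-lc} (see the sentence preceding Theorem~\ref{thm-lc} and the line after Theorem~\ref{ll-thm-main}). The only cosmetic difference is that the paper's Remark~\ref{rm1}(1) organizes the inclusion-exclusion as a single sum, $P[\cI^u\cap K=A]=\sum_{B\subseteq A}(-1)^{|A\setminus B|}\exp(-u\BCap(K\setminus B))$, whereas you decompose it into two nested sums; the two are trivially equivalent.
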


We now briefly describe our idea to show the local convergence. The method is similar to the one for ``the law of rare events". Similarly to the case of the geometric distribution in \cite{ARZ15}, two main ingredients are needed. One is to find exact asymptotics for the visiting probability of a set in a torus by a tree-indexed random walk with a small size (cf.\ Theorem~\ref{visit-small}); the other is to decompose a large random tree into small random trees (cf.\ Theorem~\ref{cutting}). As mentioned earlier, in the geometric case, the contour walk of the random tree is just the simple random walk. Note that a (simple) random walk is Markovian, and that exact counting of paths is usually achievable for simple random walk. Moreover, in the geometric case, the corresponding random tree with a given size is invariant under the corner shift. In our case, we do not have such nice properties. This creates serious difficulties when one tries to establish the corresponding results. To overcome such difficulties, we switch to the so-called Lukasiewisz walks which still have the Markov property. One might compare the proofs in this article with the ones in \cite{ARZ15}.

It is worth pointing out that we also establish two intermediate results on random trees, which are interesting in their own right. On the one hand, as we mentioned earlier, we show (cf.\ Theorem~\ref{cutting}) that, with high probability, a large random tree, precisely, a Galton-Watson tree conditioned on the total size, can be decomposed into small random trees with the same distributions as the Galton-Watson trees conditioned on size, without losing too many vertices. We can even require that those small subtrees are relatively far from each other.

On the other hand, since we aim to establish the local limit of tree-indexed random walks, we first establish a result (cf.\ Theorem~\ref{thm-ll}) for the local limit of large random trees, i.e.\ the local picture around a vertex which is not too close to the root. It is well-known that the local limit around the root is given by the so-called Galton-Watson tree conditioned on survival. Local limits of large random trees have been studied in \cite{A91A,S18L}. However, the local limits constructed there are around a uniformly selected vertex in the random tree, while we consider the local limit around any prefixed vertex as long as it is not too close to the root, say, the $m$-th vertex in the Depth-first search order when $m$ is not too small or too big. We also give bounds for the error terms.

Finally, as another application of our results on random trees and tree-indexed random walks in tori, we obtain a result (Theorem~\ref{thm-cover}) on the cover time of a torus by tree-indexed random walks. Cover times of finite graphs by a simple random walk have been studied extensively (see e.g.\ \cite{A83O, A91T, BH91C, DPRZ04C, B13G}). One important example is the cover time of the discrete torus. The cover time $\mathrm{Cov}_N$ of the torus $\Torus_N=(\Z/N\Z^d)^d$ is the time taken for a simple random walk to visit every vertex of $\Torus_N$. It is well-known that when $d\geq 3$, $E[\mathrm{Cov}_N]\sim g(0)N^d\log N^d$ where $g$ is the discrete Green function, and $\mathrm{Cov}_N/(g(0)N^d\log N^d)\rightarrow 1$ in probability. Recently, by constructing a strong coupling between random walks in tori and random interlacements, it is proved that the fluctuations of $\mathrm{Cov}_N$ are governed by the so-called Gumbel distribution \cite{B13G}. The above is for cover times by random walks. However, we do not find any reference for cover times by tree-indexed random walks. It is even vague to define the cover time by tree-indexed random walks, since unlike random walk, there does not exist a simple way to couple tree-indexed random walks (random trees) with different sizes such that the smaller one is embedded into the bigger one. It is even not well-understood whether there exists such a coupling or not (see e.g.\ Problem 1.15 in \cite{J06R} and \cite{LW04B,J06C}). Nevertheless, we show that for $d\geq 5$, the cover time of $\Torus_N$ is concentrated at $N^d\log N^d/\BCap(\{0\})$ in the following sense. For any $\epsilon>0$, the probability that every vertex in $\Torus_N$ is visited by a tree-indexed random walk with size $n=n(N)$ goes to one if $n(N)\geq (1+\epsilon)N^d\log N^d/\BCap(\{0\})$, and zero if $n(N)\leq (1-\epsilon)N^d\log N^d/\BCap(\{0\})$, as $N\rightarrow \infty$. Note that $g(0)N^d\log N^d=N^d\log N^d/\Capa(\{0\})$, and notice the obvious analogy between the results for a random walk and a tree-index random walk.

There are many natural questions about branching interlacements that are untouched in this article. For example, does the vacant set percolate or not? The answer is similar to the random interlacement case: there exists a critical value $u^*\in(0,\infty)$, such that when $u<u^*$ it does and when $u>u^*$, it does not, see the forthcoming paper \cite{Z191}. Moreover, it is probable that by constructing suitable couplings between tree-indexed random walks in tori and branching interlacements, we could obtain new results on cover times as well as the largest component in the vacant set of the torus. Since the model of random interlacements has been constructed, a great effort has been made and many beautiful results have been established in that model itself and its relation with random walk (see e.g.\ \cite{SS09P, S12D, PT15S, B13G, RS11O, SS10C, TW11O} and the references therein). It seems that many results for random interlacements can be generalized to our model.

The article is organized as follows. In Section 2, we construct the model of branching interlacements. The main task is to construct the measures $\nuc$ and $\nuv$ entering the intensity of the Poisson point process that we are after. In Section 3, we consider random trees and establish our results of the local limit and of the decomposition of large random trees. Section 4 deals with asymptotics of the visiting probability by a tree-indexed random walk in a torus and the local convergence of tree-indexed random walks. Section 5 is devoted to the cover time result.

We finish this section with a remark on constants and notations. Throughout the text, we use $C,c,C_1,c_2$ etc.\ to denote positive constants depending only on the dimension $d$, the offspring distribution $\mu$ and the jump distribution $\theta$. These constants may change from place to place. Dependence of constants on additional parameters will be made or stated explicitly. For example, $C(\lambda)$ stands for a positive constant depending on $d, \mu,\theta$ and $\lambda$. For functions $f(x)$ and $g(x)$, we write $f\sim g$ if $\lim_{x\rightarrow \infty}(f(x)/g(x))=1$. We write $f\preceq g$ and $f\succeq g$, respectively, if there exists a constant $C$ such that, $f\leq Cg$ and $f\geq Cg$ . We use $f\asymp g$ to express that $f\preceq g$ and $f\succeq g$. We write $f\ll g$ if $\lim_{x\rightarrow \infty}(f(x)/g(x))=0$. We use the notation $[[a,b]]=[a,b]\cap \Z$ for $a\leq b$. For any finite or infinite set $A$, we write either $|A|$ or $\sharp A$ for the cardinality of $A$. We write $a\wedge b$ for $\min\{a,b\}$.

\section{Branching interlacements}

\subsection{Finite and infinite trees}
We consider rooted ordered trees (also called plane trees). Recall that a tree is rooted if one vertex is distinguished as the root $o$. If $v\neq o$ is a non-root vertex, then the parent of $v$ is just the unique neighbour of $v$ closer to $o$ than $v$ (hence $o$ has no parent). Conversely, for any vertex $v$, the neighbours of $v$ that are further away from $o$ than $v$ are the children of $v$. The number of children of $v$ is the out-degree $\dout(v)$ of $v$. We write $\degree(v)$ for the regular degree of $v$. Recall further that a rooted tree is ordered if the children of each vertex are ordered.

We are interested in Galton-Watson trees (GW-trees) and their companions. Given a probability measure $\mu$ on $\N$, the GW-tree (with offspring distribution $\mu$) can be defined recursively, starting with the root and then giving each vertex, independently, a random number of children according to $\mu$. Throughout this work, we fix $\mu$ and always assume that $\mu$ is critical (i.e.\ $\sum_{i\in\N }i\mu(i)=1$) and with finite variance $\sigma^2>0$.

We also need to consider the so-called GW-tree conditioned on survival, denoted by $\Tbf^\infty$, defined as follows. \begin{itemize}
    \item Each vertex is normal or special.
    \item The root is special.
    \item A normal vertex produces only normal individuals, independently, according to $\mu$.
    \item A special vertex produces individuals independently, according to the so called size-biased distribution $\musb$ (i.e. $\musb(k)=k\mu(k)$). One of them, chosen uniformly at random, is special; the others (if any) are normal.
\end{itemize}
For convenience, we denote the corresponding sample space by
\begin{multline*}
    \Tinf=\{\tbf:\tbf\mathrm{~has~a~unique~infinite~path~starting~from~the~root;}\\
    \mathrm{there~are~infinite~vertices~on~each~side~of~the~infinite~path}\}.
\end{multline*}
For $\tbf\in\Tinf$, the unique infinite path starting from the root is called the spine of $\tbf$. Note that for any $\tbf\in\Tinf$, the depth-first search (from the root) explores only those vertices on the left hand side of $\tbf$, i.e.\ the vertices that are either in the spine or the descendants of those vertices which are elder siblings of the spine vertices (any vertex is regarded as a descendant and an ancestor of itself). For any $\tbf\in\Tinf$, we do have a full order, called the depth-first search order from infinity, on the vertex set $\tbf$ as follows. First divide $\tbf$ into two subsets $A_1$ and $A_2$. Let $A_1$ be the set of those vertices that are strictly on the left hand side of $\tbf$, i.e.\ the descendants of those vertices which are elder siblings of the spine vertices; let $A_2=\tbf\setminus A_1$. Any vertex in $A_1$ goes after any vertex in $A_2$. The order inside $A_1$ is consistent with the depth-first search order from the root. For any $v_1\neq v_2\in A_2$, let $v_1'$ ($v_2'$) be the furthest vertex in the spine, from the root,  that is an ancestor of $v_1$ ($v_2$). Then $v_1$ is before $v_2$ if and only if either $v_1'$ is a strict descendant of $v_2'$, or $v_1'=v_2'$ and $v_1$ goes before $v_2$ in the depth-first search order from $v_1'$ in the subtree grafted to the right hand side of $v_1'$, i.e.\ the subtree generated by those vertices on the right hand side of the spine that are descendants of $v_1'$ but not descendants of the next spine vertex after $v_1'$. Intuitively, the depth-first search order from infinity is just the order of traversals by a particle from infinity travelling the tree along the edges from the right to the left. See Figure 3 for an illustration of this order.

\subsection{Invariant measures on infinite trees}
Let $\Tbf^\mathrm{c}$ be the random tree defined as follows.
\begin{itemize}
    \item Each vertex, except the root, is normal or special.
    \item The root produces $i$ individuals with probability $i\mu(i-1)/2$. One of them, chosen uniformly at random, is special; the others (if any) are normal.
    \item A normal vertex produces only normal individuals, independently, according to $\mu$.
    \item A special vertex produces individuals independently, according to the size-biased distribution $\musb$. One of them, chosen uniformly at random, is special; the others (if any) are normal.
\end{itemize}

Denote by $\tc$ the law of $\Tbf^\mathrm{c}$. For any $\tbf\in\Tinf$ and any non-root vertex $v$ of $\tbf$ in the spine, $v$ divides $\tbf$ into a finite number of components (subtrees). Write $\Comp(\tbf, v)$ for the component containing the root. For any $\tbf_1,\tbf_2\in \Tinf$ and one non-root vertex $v_2$ in the spine of $\tbf_2$, we say $\tbf_1$ is equal to $\tbf_2$ up to $v_2$, if there exists a vertex $v_1$ in the spine of $\tbf_1$, such that there exists a plane tree isomorphism (keeping the graph, the root and the order structures) between $\Comp(\tbf_1,v_1)$ and $\Comp(\tbf_2,v_2)$, sending $v_1$ to $v_2$. One can easily check that for any $\tbf\in\Tinf$ and $v_0$, a non-root vertex in the spine of $\tbf$,
\begin{equation}\label{eq-tc-k}
P[\Tbf^\mathrm{c} \mathrm{~is~equal~to~} \tbf \mathrm{~up~to~} v_0]=\frac{1}{2}\prod_{v}\mu(\degree(v)-1),
\end{equation}
where the product is over all vertices in $\Comp(\tbf,v_0)$ excluding $v_0$.

We now define the corner shift $\shiftc$ on $\Tinf$. Intuitively, $\shiftc(\tbf)$ is $\tbf$ re-rooted at the next corner. The definition of corners is obvious when we draw a plane tree `standardly' in a plane (see Figure 1 or Figure 2). Note that a vertex with degree $k$ has $k$ corners (around it). Formally, for $\tbf\in\Tinf$, its image $\shiftc(\tbf)$ can be obtained as follows. Let $v$ be the first child of the root $o$ in $\tbf$. Re-root $\tbf$ at $v$ and regard $o$ as the last child of $v$. Then this new tree (with root $v$) is just $\shiftc(\tbf)$. Note that $\shiftc$ is a bijection on $\Tinf$ and that $\shiftc^{-1}(\tbf)$ is $\tbf$ re-rooted at the previous corner. Moreover, all corners of $\tbf$ can be ordered in a two-sided sequence $(\mathbf{c}_n)_{n\in\Z}$. Informally, we image a particle that starts from the root, and then explores the left hand side of the tree via the edges. The sequence of traversals gives $(\mathbf{c}_n)_{n\geq0}$. Imaging a particle exploring the right hand side of the tree gives $(\mathbf{c}_{-n})_{n\geq0}$. Then for any corner $\mathbf{c}=\mathbf{c}_i$, we define $\shiftc(\tbf,\mathbf{c})$, called $\tbf$ re-rooted at $\mathbf{c}$, to be $\shiftc^{(i)}(\tbf)$. One could draw $\shiftc(\tbf,\mathbf{c})$ easily from the drawing of $\tbf$ and the location of $\mathbf{c}$ (see Figure 1 and Figure 2 for illustrations).

\begin{figure*}[h]
	\includegraphics[width=4.9in]{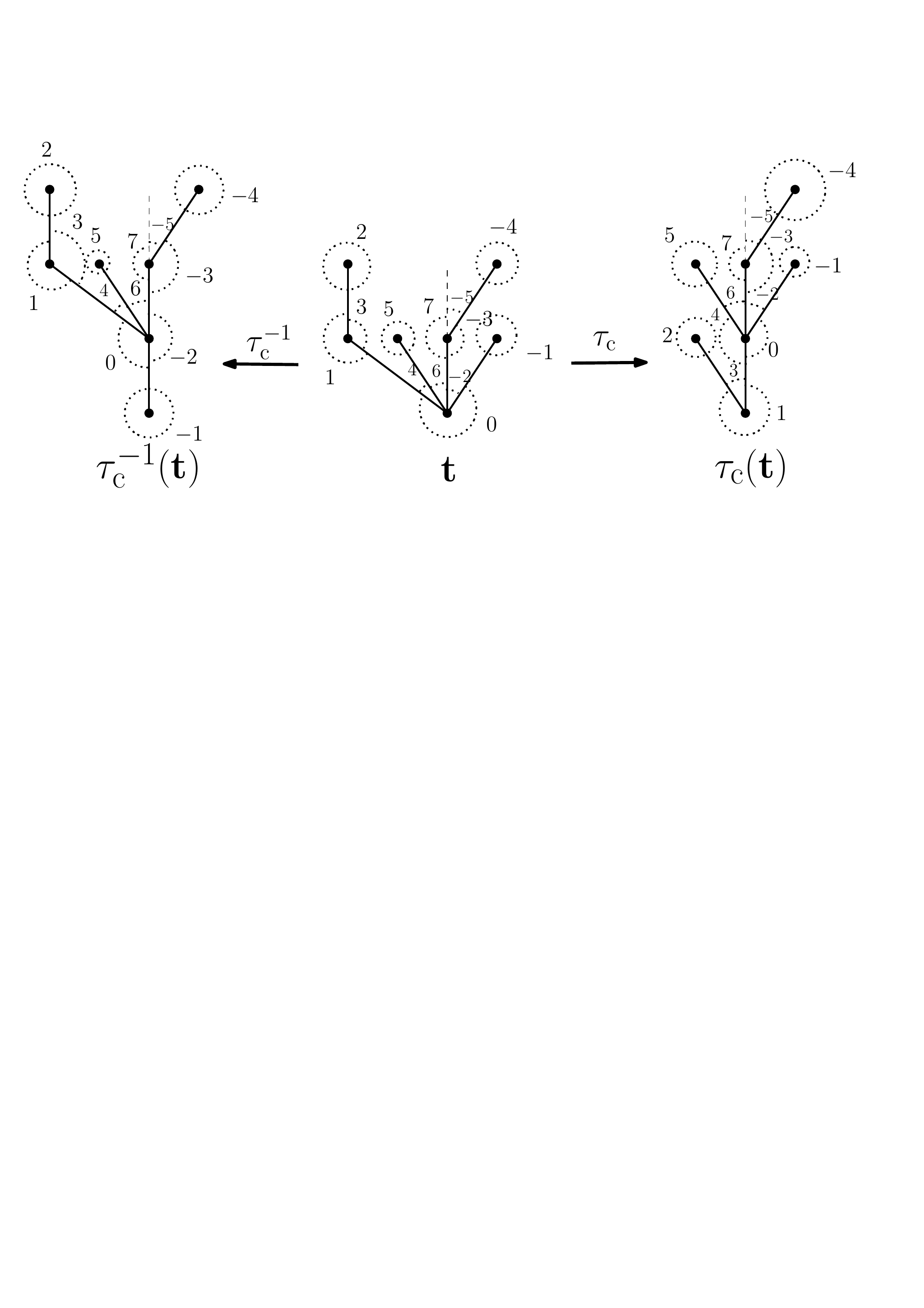}
	\caption{An illustration of the corner shift}
\end{figure*}

From \eqref{eq-tc-k}, we obtain
\begin{prop}
$\tc$ is invariant under $\shiftc$.
\end{prop}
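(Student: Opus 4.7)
The strategy is to reduce the invariance to a direct verification on cylinder events, exploiting the fact that the explicit formula \eqref{eq-tc-k} depends only on graph-theoretic data and is therefore insensitive to the rooting. The cylinder events
\[
C(\tbf,v_0)\;:=\;\{\mathbf{t}\in\Tinf : \mathbf{t}\text{ is equal to }\tbf\text{ up to }v_0\},
\]
as $\tbf$ ranges over $\Tinf$ and $v_0$ over the non-root spine vertices of $\tbf$, form a $\pi$-system generating the natural Borel $\sigma$-algebra on $\Tinf$. Since $\shiftc$ is a Borel bijection, Dynkin's $\pi$-$\lambda$ theorem reduces the invariance of $\tc$ to the pointwise identity
\[
\tc\bigl(\shiftc^{-1}C(\tbf,v_0)\bigr)\;=\;\tc\bigl(C(\tbf,v_0)\bigr)
\qquad\text{for every such cylinder.}
\]

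The first task is to unpack $\shiftc^{-1}$. Reading the definition of $\shiftc$ backwards, if $w$ denotes the last child of the root $o$ of $\tbf$, then $\shiftc^{-1}(\tbf)$ is simply $\tbf$ re-rooted at $w$, with $o$ installed as the first child of $w$; the underlying unrooted graph is unchanged, every vertex keeps its (unrooted) degree, and the spine is shifted by at most one vertex near the root. This makes the preimage $\shiftc^{-1}C(\tbf,v_0)$ itself a cylinder, $C\bigl(\shiftc^{-1}(\tbf),v_0\bigr)$, provided $v_0$ lies far enough down the spine that it remains on the new spine and the root-component cut at $v_0$ contains every vertex touched by the re-rooting. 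Any cylinder is a decreasing intersection of cylinders with $v_0$ taken arbitrarily deep, so this depth requirement may be imposed without loss.

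The comparison of probabilities is then immediate from \eqref{eq-tc-k}. The right-hand side
\[
\tc\bigl(C(\tbf,v_0)\bigr)\;=\;\tfrac{1}{2}\prod_{v\in\Comp(\tbf,v_0)\setminus\{v_0\}}\mu(\degree(v)-1)
\]
depends on the plane tree $\Comp(\tbf,v_0)$ only through the multiset of vertex degrees, which is a graph-theoretic invariant. Since $\shiftc^{-1}$ leaves every vertex degree untouched and (with $v_0$ deep enough as above) does not alter the vertex set of the root-component cut at $v_0$ either, the products computed for $\tbf$ and for $\shiftc^{-1}(\tbf)$ coincide, giving the required equality.

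The main obstacle is really only the bookkeeping in the identification step, where one must confirm that a "$v_0$ deep enough" genuinely absorbs every vertex shuffled by $\shiftc^{-1}$ — the last child $w$, the old root $o$, and the first spine vertex if it happens to differ from $w$ — and that $v_0$ still lies on the new spine. This is a finite, mechanical case analysis. Conceptually the entire argument hinges on the single observation that \eqref{eq-tc-k} is a graph invariant, which is precisely what the size-biasing and $\tfrac{1}{2}$ normalisation in the construction of $\Tbf^{\mathrm c}$ were arranged to achieve.
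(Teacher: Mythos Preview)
Your argument is correct and is essentially the paper's one-line proof spelled out in detail: the paper simply says the invariance follows ``from \eqref{eq-tc-k}'', and your elaboration makes explicit why --- the product $\prod_v \mu(\degree(v)-1)$ depends only on the multiset of vertex degrees in $\Comp(\tbf,v_0)$, which re-rooting at an adjacent corner leaves untouched, and both roots lie in the same component after deleting $v_0$.

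One minor slip to fix: a shallow cylinder $C(\tbf,v_0)$ is a countable disjoint \emph{union} of deeper cylinders, not a decreasing intersection of them (taking $v_0$ deeper along the spine of a fixed $\tbf$ shrinks the cylinder toward the singleton $\{\tbf\}$). This does not affect your argument, since the cylinders with $v_0$ at spine-depth $\geq 2$ still form a $\pi$-system generating the same $\sigma$-algebra, so the $\pi$-$\lambda$ reduction to ``$v_0$ deep enough'' is valid for that reason instead.
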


We now turn to another probability measure $\tv$ and the corresponding shift $\shiftv$. Let $\Tbf^\mathrm{v}$ ($\tv$) be (the law of) the random tree defined as follows.
\begin{itemize}
    \item Each vertex, except the root, is normal or special.
    \item The root produces $i$ individuals with probability $\mu(i-1)$. The last individual is special; the others (if any) are normal.
    \item A normal vertex produces only normal individuals, independently, according to $\mu$.
    \item A special vertex produces individuals independently, according to the size-biased distribution $\musb$. One of them, chosen uniformly at random, is special; the others (if any) are normal.
\end{itemize}
Note that $\tv$ is supported on a proper subset of $\Tinf$, denoted by $\Tinfv$, consisting of those trees in which the last child of the root is the one in the spine. Note that for any $\tbf\in\Tinf$, there is a unique corner around $o$, such that the tree re-rooted at this corner belongs to $\Tinfv$. Similarly to \eqref{eq-tc-k}, for any $\tbf\in\Tinfv$ and $v_0$, a non-root vertex in the spine of $\tbf$,
\begin{equation}\label{eq-tv-k}
P[\Tbf^\mathrm{v} \mathrm{~is~equal~to~} \tbf \mathrm{~up~to~} v_0]=\prod_{v}\mu(\degree(v)-1),
\end{equation}
where the product is over all vertices in $\Comp(\tbf,v_0)$ excluding $v_0$.

For any $\tbf\in \Tinfv$ and a vertex $v\in \tbf$, it is elementary to see that there is a unique corner around $v$, such that the tree $\tbf$ re-rooted at this corner belongs to $\Tinfv$. We call this new tree $\tbf$ re-rooted at $v$. For any $\tbf\in\Tinfv$, we define $\shiftv(\tbf)$ to be the tree $\tbf$ re-rooted at the next vertex due to the Depth-first search order from infinity (this vertex is also the first vertex that is not in the spine, due to the Depth-first search order from the root). Note that $\tv$ is a bijection on $\Tinfv$ and $\tv^{-1}(\tbf)$ is just $\tbf$ re-rooted at the previous vertex due to the Depth-first search order from infinity. Therefore we can define $\shiftv^{(n)}$ for every $n\in\Z$. Similarly, since the right hand side of \eqref{eq-tv-k} is independent of the choice of the root, we have
\begin{prop}
$\tv$ is invariant under $\shiftv$.
\end{prop}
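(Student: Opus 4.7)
The plan is to mirror the argument for Proposition~1 (invariance of $\tc$ under $\shiftc$): verify the identity $\tv(A) = \tv(\shiftv^{-1}(A))$ on a $\pi$-system of cylinder sets generating the natural $\sigma$-algebra on $\Tinfv$, and then conclude by a standard monotone class argument. A natural choice of $\pi$-system consists of the cylinders
$$
C(\tbf_0, v_0) = \{\tbf \in \Tinfv : \tbf \text{ is equal to } \tbf_0 \text{ up to } v_0\},
$$
indexed by $\tbf_0 \in \Tinfv$ and non-root spine vertices $v_0$ of $\tbf_0$. Two such cylinders either are nested (obtained by pushing $v_0$ further out in the spine) or disjoint, so they form a $\pi$-system; by letting $v_0$ march to infinity along the spine they separate points of $\Tinfv$.

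The key input is formula \eqref{eq-tv-k}:
$$
\tv(C(\tbf_0, v_0)) = \prod_{v \in \Comp(\tbf_0, v_0) \setminus \{v_0\}} \mu(\degree(v) - 1).
$$
As the paragraph preceding the proposition already emphasises, the right-hand side depends only on the multiset of degrees appearing in the component $\Comp(\tbf_0, v_0) \setminus \{v_0\}$; in particular it is unchanged by re-rooting the planar tree at any vertex of this component, since vertex degree is a graph-theoretic invariant. This is the analogue of the observation that made Proposition~1 immediate for $\tc$.

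It then remains to identify $\shiftv^{-1}(C(\tbf_0, v_0))$ as another cylinder of the same form with the same product. Since $\shiftv$ is a bijection with inverse ``re-root at the previous vertex in the DFS-from-infinity order'', by choosing $v_0$ deep enough in the spine so that the one-step inverse shift does not perturb either $v_0$ or the left-of-spine component cut off at $v_0$, one has $\shiftv^{-1}(C(\tbf_0, v_0)) = C(\shiftv^{-1}(\tbf_0), v_0)$. Applying \eqref{eq-tv-k} to this cylinder gives $\prod_v \mu(\degree(v)-1)$ over $v \in \Comp(\shiftv^{-1}(\tbf_0), v_0) \setminus \{v_0\}$, which is the same labelled subgraph (re-rooting inside a component does not change its vertex set or its degree sequence), hence the same numerical product. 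The only real obstacle is the careful bookkeeping required to verify, for $v_0$ sufficiently far out, that (i)~$v_0$ still lies in the spine of $\shiftv^{-1}(\tbf_0)$ and (ii)~the component $\Comp(\shiftv^{-1}(\tbf_0), v_0)$ coincides with $\Comp(\tbf_0, v_0)$ as a labelled planar subgraph with marked boundary vertex $v_0$. Both points follow from the explicit description of $\shiftv^{\pm 1}$ as moving the root by one step along the DFS-from-infinity order, which only alters an $O(1)$ neighbourhood of the current root and is therefore well inside $\Comp(\tbf_0, v_0)$ once $v_0$ is taken far enough; with this bookkeeping in place, the invariance of $\tv$ under $\shiftv$ follows.
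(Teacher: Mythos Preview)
Your approach is exactly the paper's: the entire proof there is the single sentence ``since the right-hand side of \eqref{eq-tv-k} is independent of the choice of the root,'' and you have simply supplied the measure-theoretic scaffolding (cylinder $\pi$-system, pushforward identification) that this sentence implicitly invokes.

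One small correction to your bookkeeping: the claim that $\shiftv^{\pm 1}$ ``only alters an $O(1)$ neighbourhood of the current root'' is not true as stated---the previous vertex in the DFS-from-infinity order sits in the bush hanging to the \emph{right} of the spine at the height-$1$ spine vertex, and that bush can be arbitrarily deep, so the new root may be at unbounded graph distance from the old one. What \emph{is} true, and what your argument actually needs, is that this previous vertex always lies in $\Comp(\tbf_0,v_0)$ as soon as $v_0$ has spine height at least~$2$: the right-side bush at $s_1$ is disjoint from the spine beyond $s_1$, hence from $v_0$. With that replacement your verification of (i) and (ii) goes through for every cylinder with $v_0$ at height~$\ge 2$, which already forms a generating $\pi$-system.
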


\begin{figure*}[h]
	\includegraphics[width=4.8in]{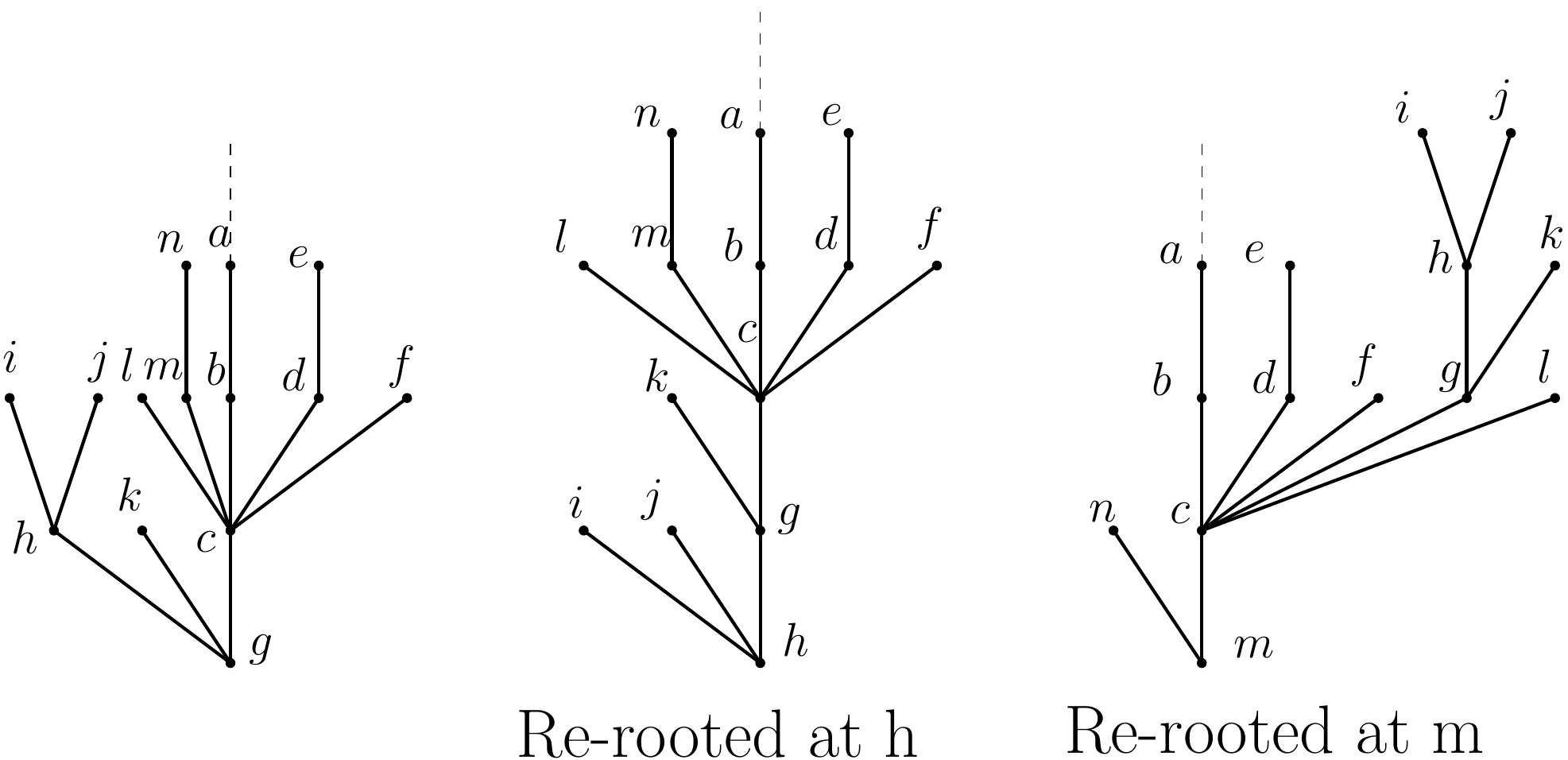}
	\caption{An illustration of the depth-first search order from infinity and the vertex shift.}
\end{figure*}

\begin{remark}
The vertex shift $\shiftv$ and the corresponding measure $\tv$ are related to the ones in \cite{LL16T} with the difference that only subtrees on the left side of the spine are considered there.
\end{remark}

\subsection{Random walk indexed by an infinite tree}
Let $\theta$ be a probability measure on $\Z^d,d\geq 5$. Throughout this work, we fix $\theta$ and always assume that $\theta$ is centered (i.e.\ with mean zero), with finite range and not supported in a strict subgroup of $\Z^d$. Let $\mathbf{t}\in \T_\infty$. Since we have not assumed that $\theta$ is symmetric, we first need to specify the orientation of edges. Intuitively, the edge is oriented in such a way that the starting point is always closer to infinity than the end point. Formally, for the edges in the spine, the orientation is from the child to the parent; for all other edges, the orientation is from the parent to the child. We write $\EE(\mathbf{t})$ for the set of all oriented edges of $\mathbf{t}$. The random walk indexed by $\mathbf{t}$, starting at $x\in \Z^d$ (with jump distribution $\theta$) is a random function $\Snake:\tbf\rightarrow \Z^d$, such that the root is mapped to $x$ and the random variables $(\Snake(v_2)-\Snake(v_1))_{(v_1,v_2)\in\EE(\mathbf{t})}$ are independent and distributed according to $\theta$. Write $P^x_\tbf$ for the law of the random walk indexed by $\tbf$ starting at $x$.

Let $\Tinfs$ ($\Tinfsv$) be the set of all pairs $(\tbf, \Snake)$ where $\tbf\in \Tinf$ ($\tbf\in \Tinfv$) and $\Snake:\tbf\rightarrow \Z^d$. We define $P^x_\mathrm{c}$ ($P^x_\mathrm{v}$) by declaring that $P^x_\mathrm{c}$ ($P^x_\mathrm{v}$) is the law of $(T,\Snake)$, where $T$ is distributed according to $\tc$($\tv$) and conditionally on $T=\tbf$, $\Snake$ is distributed according to $P^x_\tbf$.

For any $(\tbf_1,\Snake_1),(\tbf_2,\Snake_2)\in\Tinf$ ($\Tinfv$) and one non-root vertex $v_2$ of $\tbf_2$ in the spine, we say $(\tbf_1,\Snake_1)$ is equal to $(\tbf_2,\Snake_2)$ up to $v_2$, if there exist a vertex $v_1$ in the spine of $\tbf_1$  and a plane tree isomorphism $f$ between $\Comp(\tbf_2,v_2)$ and $\Comp(\tbf_1,v_1)$, such that $f(v_2)=v_1$ and $\Snake_2(v)=\Snake_1(f(v))$ for any $v$ in $\Comp(\tbf_2,v_2)$. From the definition of $P^x_\mathrm{c}$ ($P^x_\mathrm{v}$), one can see that, for any $(\tbf,\Snake_0)\in\Tinf$ or $\Tinfv$ and one non-root vertex $v_0$ of $\tbf$ in the spine,
\begin{eqnarray*}
P^x_\mathrm{c}[(T,\Snake) \mathrm{~is~equal~to~} (\tbf,\Snake_0) \mathrm{~up~to~} v_0]=\frac{1}{2}\prod_{v}\mu(\degree(v)-1)\prod_{(v,v')}\theta(\Snake_0(v')-\Snake_0(v)),\\
P^x_\mathrm{v}[(T,\Snake) \mathrm{~is~equal~to~} (\tbf,\Snake_0) \mathrm{~up~to~} v_0]=\prod_{v}\mu(\degree(v)-1)\prod_{(v,v')}\theta(\Snake_0(v')-\Snake_0(v)).
\end{eqnarray*}
In each formula, the first product is over all vertices in $\Comp(\tbf,v_0)$ excluding $v_0$, and the second is over all oriented edges in $\Comp(\tbf,v_0)$.

We now define the corresponding shift transformation $\shiftsc$ ($\shiftsv$) on $\Tinfs$ ($\Tinfsv$). For $\tbf^*=(\tbf, \Snake_0)\in \Tinfs (\Tinfsv)$, set $\shiftsc(\tbf^*)=(\tbf_1, \Snake_1)$ ($\shiftsv(\tbf^*)=(\tbf_2, \Snake_2)$), where $\tbf_1=\shiftc(\tbf)$ ($\tbf_2=\shiftv(\tbf)$) and keep the spatial locations of $\tbf_1$ ($\tbf_2$). In fact, $\shiftsc(\tbf^*)$ is $\tbf^*$ re-rooted at the next corner and $\shiftsv(\tbf^*)$ is $\tbf^*$ re-rooted at the next vertex (under the Depth-first search order from infinity). Note that the right hand sides of the formulas in the last paragraph are independent of the choice of the corner or the vertex rooted. Hence we have (when both the old root and the new root are in $\Comp(\tbf,v)$ excluding $v$)
\begin{align*}
P^{\Snake_0(o)}_\mathrm{c}[(T,\Snake) \mathrm{~is~equal~to~}& (\tbf,\Snake_0) \mathrm{~up~to~}v]\\
=&P^{\Snake_0(o_1)}_\mathrm{c}[(T,\Snake) \mathrm{~is~equal~to~} \shiftsc((\tbf,\Snake_0)) \mathrm{~up~to~} v],\\
P^{\Snake_0(o)}_\mathrm{v}[(T,\Snake) \mathrm{~is~equal~to~}& (\tbf,\Snake_0) \mathrm{~up~to~}v]\\
=&P^{\Snake_0(o_2)}_\mathrm{v}[(T,\Snake) \mathrm{~is~equal~to~} \shiftsv((\tbf,\Snake_0)) \mathrm{~up~to~} v],
\end{align*}
where $o,o_1,o_2$ are respectively, the roots of $\tbf,\shiftc(\tbf),\shiftv(\tbf)$. Hence we have
\begin{prop}\label{invariance}
$(P^x_\mathrm{c})_{x\in \Z^d}$ ($(P^x_\mathrm{v})_{x\in \Z^d}$) is invariant under $\shiftsc$ ($\shiftsv$). Precisely, for any measurable subset $A$ of $\Tinfs$  or $\Tinfsv$,
$$
\sum_{x\in\Z^d}P^x_\mathrm{c}(A)=\sum_{x\in\Z^d}P^x_\mathrm{c}(\shiftsc(A)), \text{ or }\sum_{x\in\Z^d}P^x_\mathrm{v}(A)=\sum_{x\in\Z^d}P^x_\mathrm{v}(\shiftsv(A)).
$$
\end{prop}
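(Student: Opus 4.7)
The plan is to reduce to cylinder events via a monotone class argument; the proposition is then essentially already verified by the displayed identities immediately preceding the statement. On $\Tinfs$, the natural generating $\pi$-system is the family of cylinder events
$$ A_{(\tbf, \Snake_0), v_0} := \{(T, \Snake) \in \Tinfs : (T, \Snake) \text{ is equal to } (\tbf, \Snake_0) \text{ up to } v_0\}, $$
indexed by $(\tbf, \Snake_0) \in \Tinfs$ and $v_0$ a non-root spine vertex of $\tbf$. Since $\shiftsc$ is a bijection carrying cylinders to cylinders of the same form (possibly after enlarging $v_0$ further along the spine), this class is closed under $\shiftsc$ and finite intersection, and generates the product $\sigma$-algebra on $\Tinfs$.

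For such an $A = A_{(\tbf, \Snake_0), v_0}$, the probability $P^x_\mathrm{c}(A)$ vanishes unless $x = \Snake_0(o)$, while $P^x_\mathrm{c}(\shiftsc(A))$ vanishes unless $x = \Snake_0(o_1)$, where $o_1 = \shiftc(o)$ is the new root. The displayed identity immediately preceding the proposition asserts precisely that the two non-zero values coincide, namely
$$ P^{\Snake_0(o)}_\mathrm{c}(A) \;=\; P^{\Snake_0(o_1)}_\mathrm{c}(\shiftsc(A)), $$
which is exactly what is needed for $\sum_x P^x_\mathrm{c}(A) = \sum_x P^x_\mathrm{c}(\shiftsc(A))$. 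The content of this cylinder-level equality, as the text notes, is that the product formula for the cylinder probability, with edges oriented towards $v_0$, does not depend on which corner inside $\Comp(\tbf, v_0) \setminus \{v_0\}$ is singled out as the root; this is where the specific design of $\tc$ (and of the edge-orientation convention defining $P^x_\mathrm{c}$) is used.

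To conclude, I would observe that both $\sum_x P^x_\mathrm{c}$ and its $\shiftsc$-push-forward are $\sigma$-finite (each assigns finite mass to the cylinder $\{\Snake(o) \in F\}$ for any finite $F \subset \Z^d$) and coincide on the generating $\pi$-system, so the monotone class theorem extends the equality to the full $\sigma$-algebra. The vertex case is entirely parallel, replacing $\tfrac{1}{2}$ by $1$ and using the corresponding identity for $P^x_\mathrm{v}$ under $\shiftsv$. The only step of real bookkeeping, beyond what is already packaged in the pre-existing formulas, is to check that the generating class is genuinely $\shiftsc$-invariant: for any $n$, the image of a cylinder $A_{(\tbf, \Snake_0), v_0}$ under $\shiftsc^{(n)}$ equals a cylinder of the form $A_{\shiftsc^{(n)}((\tbf, \Snake_0)), v_0'}$, with $v_0'$ a spine vertex further up, once $v_0$ is taken far enough along the spine.
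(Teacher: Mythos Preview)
Your proposal is correct and follows essentially the same route as the paper: the paper's argument consists precisely of the displayed cylinder-level identities you cite, followed by the words ``Hence we have,'' leaving the extension to general measurable $A$ implicit. You have simply made that implicit step explicit via a $\pi$-system/monotone class argument, and correctly flagged the one bookkeeping point (taking $v_0$ far enough along the spine so that both the old and new roots lie in $\Comp(\tbf,v_0)\setminus\{v_0\}$, which is exactly the caveat the paper inserts before its displayed identity).
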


\subsection{Construction of branching interlacements}

We can construct the model of branching interlacements using either $\big(\Tinfs,\shiftsc,(P^x_\mathrm{c})_{x\in \Z^d}\big)$ or $\big(\Tinfsv,\shiftsv,(P^x_\mathrm{v})_{x\in \Z^d}\big)$. There is no significant difference between them. We use $\big(\Tinfsv,\shiftsv,(P^x_\mathrm{v})_{x\in \Z^d}\big)$ here and will mention the main difference later. Simply write $W_0=\Tinfsv$, $\Shift=\shiftsv$, and $\Shift_k=\Shift^{(k)}$, for $k\in \Z$.

For any $w\in W_0$, we can view $w$ as a function: $\Z\rightarrow \Z^d$ such that
$$
w(k)=\mathrm{the~image~of~the~root~of~}\Shift_k(w).
$$

\begin{remark}
We now see the advantage of introducing $\Shift$: $\Shift$ plays as a time-translation and $w$ is like a two-sided random walk path (though not Markovian). Then we can follow the construction of random interlacements or the branching interlacements for the geometric case. In face, if we let $\mu$ be the degenerate probability measure or the critical geometric probability measure, we will recover random interlacements introduced in \cite{S10V} (modifications needed for the sample space) or the branching interlacements for the geometric distribution in \cite{ARZ15}.
\end{remark}

It is easy to see that (e.g.\ see Proposition~3.4 in \cite{Z162}), for any $K\ssubset \Z^d$ (when $d\geq 5$ as assumed), the set $\{n\in\Z: w(n)\in K\}$ is almost surely finite (under $P^\mathrm{v}_x$ or $P^\mathrm{c}_x$). Hence, we can concentrate only on those `transient' trajectories and let
$$
W=\{w\in W_0:\lim_{|n|\rightarrow\infty}w(n)=\infty\}.
$$

Define the set of spatial trees modulo time-shift by $\overline{W}=W/\cong$, where $\cong$ is the equivalence relation
$$
w_1\cong w_2, \mathrm{~if~} \Shift_k(w_1)=w_2 \mathrm{~for~some~} k\in\Z.
$$
Denote the canonical projection by $\pi:W\rightarrow \overline{W}$ which sends each element in W to its equivalence class in $\overline{W}$.

For any $K\ssubset \Z^d$, let
\begin{equation}\label{def-WA}
W_K=\{w\in W: w(n)\in K, \mathrm{~for~some~}n\in\Z\}, \quad \overline{W}_K=\pi(W_K).
\end{equation}
For any $w\in W_K$, define the `entrance time' by
$$
H_K(w)=\inf\{n:w(n)\in K\}.
$$
Since $w$ is `transient', $H_K(w)\in(-\infty,\infty)$ when $w\in W_K$. We can partition $W_K$ according to the entrance time.
$$
W_K=\bigcup_{n\in \Z}W_K^n, \mathrm{~where~}W_K^n=\{w\in W_K:H_K(w)=n\}.
$$
Define $t_K:W_K\rightarrow W_K^0$ ($\overline{t}_K:\overline{W}_K\rightarrow W_K^0$) by $t_K(w)=w_0$ ($\overline{t}_K(\overline{w})=w_0$), where $w_0$ is the unique element in $W_K^0$ with $w_0\cong w$ ($\pi(w_0)=\overline{w}$).

For every $K\ssubset \Z^d$, we now define a measure $Q_K$ on $W$ by
\begin{equation}\label{def-Q}
Q_K(\bullet)=\sum_{x\in K}P^x_\mathrm{v}(\bullet\cap W_K^0).
\end{equation}

It turns out that for different $K\subseteq K'\ssubset\Z^d$, $Q_K$ and $Q_{K'}$ are consistent in the following sense.
\begin{theorem}\label{pro_consis}
For any measurable event $A\subseteq \overline{W}$, and $K\subseteq K'\ssubset \Z^d$, we have
\begin{equation}
Q_K(\pi ^{-1}(A)\cap W_K)=Q_{K'}(\pi ^{-1}(A)\cap W_K).
\end{equation}
\end{theorem}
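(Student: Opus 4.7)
The plan is to leverage the shift-invariance from Proposition~\ref{invariance} together with the elementary observation that $K\subseteq K'$ forces $H_{K'}(w)\leq H_K(w)$ for every $w\in W_K$. Each equivalence class in $\overline{W}_K$ has a unique representative $w^{(K)}$ with $H_K(w^{(K)})=0$, which is what $Q_K$ counts, and a unique representative $w^{(K')}$ with $H_{K'}(w^{(K')})=0$, which is what $Q_{K'}$ counts; these two representatives differ by a non-negative time-shift $m=H_K(w^{(K')})$. Matching the two measures then amounts to regrouping $W_K^0$ according to this offset and re-aligning each piece with $W_{K'}^0$ via a shift.

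Concretely, I would first decompose
$$
W_K^0 = \bigsqcup_{m\geq 0}B_m, \qquad B_m = \{w\in W_K^0 : H_{K'}(w)=-m\},
$$
which is indeed a partition because $H_{K'}\leq H_K=0$ on $W_K^0$. Using the convention $\Shift_k(w)(j)=w(j+k)$, which gives $H_K(\Shift_{-m}(w))=H_K(w)+m$ and $H_{K'}(\Shift_{-m}(w))=H_{K'}(w)+m$, a direct check shows that
$$
\Shift_{-m}(B_m)=C_m,\qquad C_m=\{w\in W_{K'}^0\cap W_K : H_K(w)=m\},
$$
and that $\{C_m\}_{m\geq 0}$ partitions $W_{K'}^0\cap W_K$. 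Since $\pi^{-1}(A)$ is by construction invariant under every $\Shift_k$, it follows that $\Shift_{-m}(\pi^{-1}(A)\cap B_m)=\pi^{-1}(A)\cap C_m$.

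Iterating Proposition~\ref{invariance} $m$ times (using that $\shiftsv$ is a bijection on $\Tinfsv$, so the invariance extends to arbitrary integer shifts $\Shift_k$) I would then conclude
$$
\sum_{x\in\Z^d} P^x_{\mathrm{v}}(\pi^{-1}(A)\cap B_m) = \sum_{x\in\Z^d} P^x_{\mathrm{v}}(\pi^{-1}(A)\cap C_m).
$$
Summing over $m\geq 0$ and noting that $P^x_{\mathrm{v}}(W_K^0)=0$ unless $x\in K$, and similarly $P^x_{\mathrm{v}}(W_{K'}^0)=0$ unless $x\in K'$, the left-hand side collapses to $Q_K(\pi^{-1}(A)\cap W_K)$ while the right-hand side collapses to $Q_{K'}(\pi^{-1}(A)\cap W_K)$, giving the claim.

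The main obstacle is purely bookkeeping: I need to fix the sign convention for $\Shift_k$ carefully so that the shifted pieces land correctly on $W_{K'}^0$, confirm that $\pi^{-1}(A)$ is shift-invariant (which is immediate from the definition $\overline{W}=W/\cong$), and upgrade the one-step invariance of Proposition~\ref{invariance} to arbitrary integer shifts. Once these mechanical checks are in place, the identity is an immediate Fubini/partition argument, and no further probabilistic input beyond Proposition~\ref{invariance} is required.
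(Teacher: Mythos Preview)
Your proof is correct and follows essentially the same route as the paper: partition $W_K^0$ by the offset $m=H_K-H_{K'}$, shift each piece by $m$ to land on $W_{K'}^0\cap W_K$, and invoke the shift-invariance of Proposition~\ref{invariance}. The paper additionally refines the partition by the entry points $x\in K$ and $y\in K'$ (writing $A_{x,n,y}$ instead of your $B_m$), but this finer decomposition is not needed for the argument and your coarser version is a mild streamlining of the same idea.
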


Therefore we can define a measure on $\overline{W}$.
\begin{corollary}\label{def-nu}
There exists a unique $\sigma$-finite measure $\nuv$ on $\overline{W}$ which satisfies: for all $K\ssubset\Z^d$,
\begin{equation}\label{def_nu}
\nuv|_{\overline{W}_K}=Q_K\circ \pi^{-1}.
\end{equation}
\end{corollary}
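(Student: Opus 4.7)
The plan is to build $\nuv$ by patching the finite measures $Q_K \circ \pi^{-1}$ as $K$ exhausts $\Z^d$, with Theorem \ref{pro_consis} providing the consistency that makes this work. Each $Q_K$ has total mass at most $|K|$, since $Q_K(\bullet) = \sum_{x\in K} P^x_\mathrm{v}(\bullet \cap W_K^0)$ with each $P^x_\mathrm{v}$ a probability, so the pushforwards $Q_K \circ \pi^{-1}$ are finite measures concentrated on $\overline{W}_K$. Moreover, for any exhaustion $K_1 \subseteq K_2 \subseteq \cdots$ of $\Z^d$ by finite subsets with $\bigcup_n K_n = \Z^d$, one has $\pi^{-1}(\overline{W}_{K_n}) = W_{K_n}$, and every trajectory in $W$ hits some $K_n$, giving $W = \bigcup_n W_{K_n}$ and hence $\overline{W} = \bigcup_n \overline{W}_{K_n}$.

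First I would fix such an exhaustion and, for any measurable $A \subseteq \overline{W}$, set
\[
\nuv(A) \DeFine \lim_{n \to \infty} Q_{K_n}\bigl(\pi^{-1}(A) \cap W_{K_n}\bigr).
\]
The sequence is nondecreasing: applying Theorem \ref{pro_consis} with $K = K_n$, $K' = K_{n+1}$ gives $Q_{K_n}(\pi^{-1}(A) \cap W_{K_n}) = Q_{K_{n+1}}(\pi^{-1}(A) \cap W_{K_n}) \leq Q_{K_{n+1}}(\pi^{-1}(A) \cap W_{K_{n+1}})$, so the limit exists in $[0,\infty]$.

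Next I would verify the measure axioms and the matching property \eqref{def_nu}. Countable additivity of $\nuv$ reduces to countable additivity of each $Q_{K_n}$ together with monotone convergence in $n$. For \eqref{def_nu}, fix $K \ssubset \Z^d$ and measurable $A \subseteq \overline{W}_K$; since $\pi^{-1}(A) \subseteq \pi^{-1}(\overline{W}_K) = W_K$, for every $n$ with $K_n \supseteq K$ one has $\pi^{-1}(A) \cap W_{K_n} = \pi^{-1}(A) = \pi^{-1}(A) \cap W_K$, and Theorem \ref{pro_consis} yields $Q_{K_n}(\pi^{-1}(A) \cap W_{K_n}) = Q_K(\pi^{-1}(A) \cap W_K) = Q_K(\pi^{-1}(A))$. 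The limit is therefore already attained at any such $n$, which proves $\nuv|_{\overline{W}_K} = Q_K \circ \pi^{-1}$ and simultaneously shows that $\nuv$ does not depend on the chosen exhaustion. The $\sigma$-finiteness is immediate from $\nuv(\overline{W}_{K_n}) \leq |K_n| < \infty$ together with $\overline{W} = \bigcup_n \overline{W}_{K_n}$.

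Uniqueness follows from continuity from below: any other $\sigma$-finite measure $\nuv'$ obeying \eqref{def_nu} agrees with $\nuv$ on every $\overline{W}_{K_n}$, hence on their increasing union $\overline{W}$. All of the substantive content is absorbed in the consistency statement of Theorem \ref{pro_consis}; the corollary is then a routine consistent-family extension, so I do not anticipate any genuine obstacle at this step.
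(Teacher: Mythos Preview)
Your argument is correct and is precisely the standard consistent-family extension that the paper intends: the paper does not spell out a proof of this corollary at all, treating it as an immediate consequence of Theorem~\ref{pro_consis}, and your exhaustion-and-limit construction is exactly the routine justification it tacitly invokes. There is nothing to add.
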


\begin{proof}[Proof of Theorem~\ref{pro_consis}]
Write $B=t_K(\pi ^{-1} (A)\cap W_K)$. Since $Q_K$ ($Q_{K'}$) is supported on $W_K^0$ ($W_{K'}^0$), we have
\begin{equation*}
Q_K(\pi ^{-1}(A)\cap W_K)=Q_K(B)\quad \mathrm{and} \quad Q_{K'}(\pi ^{-1}(A)\cap W_K)=Q_{K'}(t_{K'}(B)).
\end{equation*}
It suffices to show
\begin{equation}\label{equ_QK}
Q_K(B)=Q_{K'}(t_{K'}(B)).
\end{equation}

We partition $W_K^0$ according to the entrance times and the entrance points of $K$ and $K'$. For any $x\in K, y\in K'$ and $n \in \mathbb{Z}^-=\{0,-1,-2,...\}$, write
\begin{equation*}
A_{x,n,y}=\{w\in W:w(0)=x, H_K(w)=0, w(n)=y, H_{K'}(w)=n\}.
\end{equation*}
On $A_{x,n,y}$, $t_{K'}$ is injective, $t_{K'}(w)(\bullet)=w(\bullet +n)$ and
\begin{displaymath}
t_{K'}(A_{x,n,y})=\{w\in W:w(0)=y, H_K(w)=-n, w(-n)=x, H_{K'}(w)=0\}.
\end{displaymath}

Let $B_{x,n,y}=B\cap A_{x,n,y}$. Then $B$ has a countable partition.
\begin{equation*}
B=\bigcup _{x\in K, y\in K',n \in \mathbb{Z}^-}B_{x,n,y}.
\end{equation*}
In order to show \eqref{equ_QK}, it suffices to show
\begin{equation*}
Q_K(B_{x,n,y})=Q_{K'}(t_{K'}(B_{x,n,y})).
\end{equation*}

By \eqref{def-Q}, one can get
\begin{align*}
Q_K(B_{x,n,y})=&P^x_\mathrm{v}[w(n)=y, H_K(w)=0, H_{K'}(w)=n, w(0)=x, w\in\pi^{-1}(A)]\\
=&P^y_\mathrm{v}[w(-n)=x, H_K(w)=-n, H_{K'}(w)=0, w(0)=y,w\in\pi^{-1}(A)]\\
=&Q_{K'}(t_{K'}(B_{x,n,y})),
\end{align*}
where for the second line we use the fact that $(P^x_\mathrm{v})_{x\in\Z^d}$ is invariant under $\shiftv$, see Proposition~\ref{invariance}.
\end{proof}

We have constructed the measure $\nuv$ which will be used as the intensity measure in the construction of the branching interlacement Poisson point process. Before moving to the Poisson point process, let us calculate $\nuv(\overline{W}_K)$.
$$
\nuv(\overline{W}_K)=Q_K(W_K)=Q_K(W^0_K)=\sum_{x\in K}P^x_v(W_K^0).
$$
Note that $P^x_v(W_K^0)$ is just the escape probability $\Es_K(x)$ introduced by the author in \cite{Z161}. $BCap(K)$ is also introduced there as the sum of $\Es_K(x)$ over all $x\in K$. Hence, we have

\begin{prop}
\begin{equation}\label{nuBCap}
\nuv(\overline{W}_K)=\BCap(K).
\end{equation}
\end{prop}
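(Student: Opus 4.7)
The plan is to unwind the definition of $\nuv$ on the set $\overline{W}_K$ step by step and then recognize the resulting quantity as $\BCap(K)$ by matching it to its definition via escape probabilities.

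First I would apply Corollary~\ref{def-nu} to write
\[
\nuv(\overline{W}_K) = \nuv|_{\overline{W}_K}(\overline{W}_K) = Q_K(\pi^{-1}(\overline{W}_K)).
\]
Because $Q_K$ is concentrated on $W_K^0 \subseteq W_K \subseteq \pi^{-1}(\overline{W}_K)$ (by its very definition in \eqref{def-Q}), the preimage $\pi^{-1}(\overline{W}_K)$ can be replaced by $W_K^0$ without changing the mass, giving
\[
\nuv(\overline{W}_K) = Q_K(W_K^0) = \sum_{x\in K} P^x_\mathrm{v}(W_K^0).
\]

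Next I would identify each summand. By definition of $P^x_\mathrm{v}$, under this law the image of the root is $x$; hence $w(0)=x$ almost surely. The event $W_K^0 = \{w \in W : H_K(w)=0\}$ therefore becomes the event that the images $w(n)$, which correspond to the spatial locations visited by the pre-root portion of the tree under the vertex-shift indexing, avoid $K$ for every $n<0$. By the construction of $\tv$ and the vertex shift $\shiftsv$ (and the basic identity between pre-root vertices under $\shiftv^{-n}$ and descendants of the spine in $\Tbf^\mathrm{v}$), this is exactly the probability that starting from $x \in K$, the spatial tree indexed by $\Tbf^\mathrm{v}$ places no vertex of the non-spine sub-branches back into $K$. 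This is precisely the escape probability $\Es_K(x)$ introduced in \cite{Z161}, so $P^x_\mathrm{v}(W_K^0) = \Es_K(x)$.

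Finally, summing over $x\in K$ and invoking the definition of branching capacity,
\[
\nuv(\overline{W}_K) = \sum_{x\in K}\Es_K(x) = \BCap(K).
\]

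The only step that requires real care (as opposed to mere rewriting of definitions) is the identification $P^x_\mathrm{v}(W_K^0) = \Es_K(x)$: one must verify that the event ``the trajectory first enters $K$ at time $0$'' translates correctly to the absence of any vertex of $\Tbf^\mathrm{v}$ preceding the root (in the depth-first search order from infinity) being mapped into $K$, which is the probabilistic content of the escape probability as defined in \cite{Z161}. Once that identification is in place, the rest is a direct unwinding of the construction of $\nuv$ through $Q_K$.
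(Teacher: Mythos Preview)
Your proof is correct and follows essentially the same route as the paper's own argument: unwind $\nuv(\overline{W}_K)$ through $Q_K$ to obtain $\sum_{x\in K}P^x_\mathrm{v}(W_K^0)$, then identify $P^x_\mathrm{v}(W_K^0)=\Es_K(x)$ by reference to \cite{Z161} and sum to get $\BCap(K)$. One small imprecision in your verbal description of the escape event: the vertices with negative index under the vertex shift are not ``the non-spine sub-branches'' in general, but rather the spine vertices beyond the root together with the bushes grafted to the \emph{right} of the spine (i.e., the vertices preceding the root in the depth-first search order from infinity); this does not affect the argument, since the identification with $\Es_K$ is ultimately delegated to \cite{Z161} in both your proof and the paper's.
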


\begin{remark}
If we use the `last visiting time' $D_K(w)=\sup\{n:w(n)\in K\}$ instead of the `first visiting time' $H_K(w)$, then we get another equivalent definition of $\BCap(K)$: $\BCap(K)=\sum_{x\in K}\Esc_K(x)$. See \cite{Z161} for the definition of $\Esc_K(x)$. Note that in \cite{Z161}, $\sum_{x\in K}\Es_K(x)=\sum_{x\in K}\Esc_K(x)$ is proved by establishing the exact asymptotics of the visiting probability of $K$ and for general graphs, this method may fail since the visiting probability may behave quite differently. One can get a similar flavor from the case of random walk and random interlacements.
\end{remark}

We now introduce the space of locally finite point measures on $\overline{W}$.
\begin{multline}\label{def_eq_Omega}
\Omega = \Big\{  \omega = \sum_{n \in I} \delta_{\overline{w}_n}, \mathrm{~where~} \overline{w}_n \in
 \overline{W},\; I\subseteq \N  \\
\mathrm{~and~} \omega(\overline{W}_K ) < \infty
 \mathrm{~for~any~} K \ssubset \Z^d \Big\}.
\end{multline}
For any $u\in[0,\infty)$, let $\P^u$ be the law of a Poisson point process on $\Omega$ with intensity measure $u\nuv$. We can now define the branching interlacement at level $u$
\begin{definition}
The branching interlacement at level $u$ is defined to be the random subset of $\Z^d$ given by
\begin{equation}
\cI^u=\cI^u(\omega) =
\bigcup_{n\geq 0} \mathrm{Range}(\overline{w}_n), \quad \mathrm{where } \quad
 \omega= \sum_{n \ge 0} \delta_{\overline{w}_n}\; \mathrm{~has~law~} \P^u,
\end{equation}
and for $\overline{w}\in \overline{W}$, $\mathrm{Range}(\overline{w})=w(\Z)$, such that $w\in W$ is any element satisfying $\pi(w)=\overline{w}$.
The vacant set of branching interlacement at level $u$ is defined by
\begin{equation}
\cV^u=\cV^u (\omega) = \Z^d \setminus \mathcal{I}^u(\omega).
\end{equation}
\end{definition}

\begin{prop}
For any $u\geq 0$ and $K\ssubset \Z^d$, we have
\begin{equation}\label{VBCap}
P[K\subseteq \cV^u(\omega)]=P[K\cap \cI^u(\omega)=\emptyset]=\exp(-u\BCap (K)).
\end{equation}
\end{prop}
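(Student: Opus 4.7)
The statement is essentially an immediate consequence of the Poisson structure of $\omega$ together with the identity $\nuv(\overline{W}_K)=\BCap(K)$ established in \eqref{nuBCap}. My plan is to reduce the event $\{K\cap\cI^u=\emptyset\}$ to a zero-count event for the Poisson point process, and then compute that probability explicitly from the intensity.

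First I would observe the set-theoretic identity
\[
\{\omega:\cI^u(\omega)\cap K=\emptyset\}\;=\;\{\omega:\omega(\overline{W}_K)=0\}.
\]
The forward inclusion is clear: if $\omega(\overline{W}_K)\ge 1$, then some atom $\overline{w}_n$ satisfies $\mathrm{Range}(\overline{w}_n)\cap K\ne\emptyset$, hence $K\cap\cI^u\ne\emptyset$. The reverse inclusion uses the definitions directly, noting that $\overline{W}_K=\pi(W_K)$ is by \eqref{def-WA} exactly the set of equivalence classes of trajectories meeting $K$, and that $\mathrm{Range}(\overline{w})$ is well-defined on equivalence classes.

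Next, since $\omega$ is under $\P^u$ a Poisson point process on $\overline{W}$ with intensity $u\nuv$, and since $\nuv(\overline{W}_K)=\BCap(K)<\infty$ (so that $\overline{W}_K$ is a set of finite intensity, consistent with the local finiteness required in \eqref{def_eq_Omega}), the count $\omega(\overline{W}_K)$ is a Poisson random variable with parameter $u\nuv(\overline{W}_K)$. Hence
\[
\P^u[\omega(\overline{W}_K)=0]=\exp(-u\,\nuv(\overline{W}_K)).
\]

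Combining the two displays with \eqref{nuBCap} gives
\[
\P^u[K\subseteq\cV^u]=\P^u[K\cap\cI^u=\emptyset]=\exp(-u\,\BCap(K)),
\]
as claimed. There is no real obstacle here: every substantive ingredient has already been done, namely the construction of the $\sigma$-finite intensity measure $\nuv$ via Corollary~\ref{def-nu}, the consistency Theorem~\ref{pro_consis}, and the computation $\nuv(\overline{W}_K)=\BCap(K)$. The only point worth stating carefully is the set-theoretic reduction to $\{\omega(\overline{W}_K)=0\}$, which is why $\nuv$ was built on equivalence classes in the first place.
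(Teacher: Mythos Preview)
Your proof is correct and follows exactly the same approach as the paper: reduce the vacancy event to $\{\omega(\overline{W}_K)=0\}$, use the Poisson property to get $\exp(-u\,\nuv(\overline{W}_K))$, and then invoke \eqref{nuBCap}. The paper compresses this into a single displayed line, while you spell out the set-theoretic identity more carefully, but there is no substantive difference.
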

\begin{proof}
$$
P[K\subseteq \cV^u(\omega)]=P[K\cap \cI^u(\omega)=\emptyset]=\exp(-u\cdot\nuv(\overline{W}_K))\stackrel{\eqref{nuBCap}}{=}\exp(-u\BCap (K)).
$$
\end{proof}

\begin{remark}\label{rm1}
\begin{enumerate}
\item Using the inclusion-exclusion principle, one can see that \eqref{VBCap} uniquely determines the laws of $\cI^u$ and $\cV^u$. Precisely, for any $A\subseteq K\ssubset \Z^d$, we have,
\begin{align*}
   P[\cI^u\cup K=A]=&
   \sum_{B\subseteq A}(-1)^{|A\setminus B|}P[\cI^u\cup (K\setminus B)=\emptyset]\\
   =&\sum_{B\subseteq A}(-1)^{|A\setminus B|}\exp(-u\BCap(K\setminus B)).
\end{align*}

\item As mentioned earlier, if we let $W=\Tinfs,\Shift=\shiftsc$ and replace $P^x_\mathrm{v}$ by $P^x_\mathrm{c}$ in \eqref{def-Q}, we can establish another measure $\nuc$ on $\overline{W}$. It is elementary to see that $\nuc(\overline{W}_K)=\BCap(K)/2$, for any $K\ssubset \Z^d$. Therefore, we can use $2u\nuc$ for the intensity measure in the construction of the branching interlacement set at level $u$ without changing its law.
\item Obviously $\nuv$ ($\nuc$) is invariant under spatial translation: $\overline{w}\rightarrow \overline{w}+x$, for every $x\in\Z^d$. Hence, $\cI^u$ is space-translation invariant (i.e.\ $\cI^u\stackrel{d}{=}\cI^u+x$ for every $x\in\Z^d$). On the other hand, $\nuc$ is invariant under the time inversion while $\nuv$ is not.
\item Since $\cI^u$ consists of infinite tree-indexed random walk trajectories, $\cV^u$ possesses the following screening effect: if $K'$ separates $K$ from infinity (with respect to the connectedness induced by $\theta$), then
$$
\{K'\subseteq \cV^u\}\subseteq \{K\subseteq \cV^u\}.
$$
\item In \cite{Z161}, it is showed that the branching capacity of the cube (or the ball) $B_0(r)$ with radius $r$ behaves like
$$
\BCap(B_0(r))\asymp r^{d-4}.
$$
Hence, for some positive constants $c,C$,
$$
\exp(-Cur^{d-4})\leq P[B_0(r)\subseteq \cV^u]\leq\exp(-cur^{d-4}).
$$
In particular, there is generally not exponential decay with $|A|$ of $P[A\subseteq \cV^u]$.
\item Since $\BCap(K\cup K')\leq \BCap(K)+\BCap(K')$, one can see that the events $\{K\subseteq \cV^u\}$ and $\{K'\subseteq \cV^u\}$ are positively correlated:
$$
P[\{K\cup K'\subseteq \cV^u\}]\geq P[\{K\subseteq \cV^u\}]P[\{K'\subseteq \cV^u\}].
$$
On the other hand, the FKG inequality holds for $\cV^u$. More precisely, for every pair of increasing random variables $X,Y$ with finite second moment with respect to the law of $\cV^u$, denoted by $\mathcal{Q}^u$, we have
$$
\int XY \mathrm{d}\mathcal{Q}^u\geq \int X \mathrm{d}\mathcal{Q}^u \int Y \mathrm{d}\mathcal{Q}^u.
$$
This can be proved in much the same way as in \cite{T09I}
\item Using the results in \cite{Z161}, it is not difficult to get
$$
\BCap(\{x\})+\BCap(\{y\})-\BCap(\{x,y\})\sim \frac{c}{\|x-y\|^{d-4}},
$$
where $\|x\|=\sqrt{x\cdot Q^{-1}x}/\sqrt{d}$, with $Q$ being the covariance matrix of $\theta$. From this, one can see
$$
\quad\quad\quad\mathrm{Cov}(1_{x\in \cI^u},1_{y\in \cI^u})=\mathrm{Cov}(1_{x\in \cV^u},1_{y\in \cV^u})\sim \frac{cu}{\|x-y\|^{d-4}} ,\mathrm{~as~}\|x-y\|\rightarrow \infty.
$$
\item The constant ``$4$'' in the exponent comes from the fact that tree-indexed random walks (with regularity assumptions) behave like $4$-dimensional subjects (while random walks behave like $2$-dimensional subjects). This fact is reflected in many results on tree-indexed random walk, see e.g. \cite{L05R,L99S,LL16T,PZ16C,Z161,Z162,Z163,ZThesis}.
\item The measure $ \nu_\mathrm{v}|_{\overline{W}_K}\circ\bar{t}^{-1}_K$ on $W_K^0$ is
$$
\sum_{x\in K}\Es_K(x)P^\mathrm{v}_x[\bullet|H_K=0]=\BCap(K)\sum_{x\in K}\frac{\Es_K(x)}{\BCap(K)}P^\mathrm{v}_x[\bullet|H_K=0].
$$
Note that the sum on the right hand site is a probability law (on the space of tree-indexed random walks).  It is not difficult to verify that the entering measure (of $K$) by a tree-indexed random walk with this law coincides with the `branching harmonic measure' of $K$, $m_K$, introduced in Section 9 in \cite{Z161}. From this, one can obtain Proposition~\ref{BI-mK}.
\item The constructions here for the random walk in $\Z^d,d\geq5$ can be straightforwardly generalized to the case of an infinite locally finite connected graph $G=(V,\mathcal{E})$ with vertex set $V$ and (undirected) edge set $\mathcal{E}$, endowed with positive weights $\lambda (e)>0, e\in E$, such that the corresponding nearest neighbor walk on $V$ with transition probability
$$
p(x,y)=\frac{\lambda(\{x,y\})}{\sum_{z:\{x,z\}\in \mathcal{E}}\lambda(\{x,z\})}
$$
is `branching transient' in the sense that any finite subset of $V$ is visited finitely many times by the tree-indexed random walk with law $P^\mathrm{v}_x$, almost surely. Note that the corresponding random walk on $V$ is reversible and has a stationary measure $\lambda(x)=\sum_{z:\{x,z\}\in \mathcal{E}}\lambda(\{x,z\})$. In this set-up some of our definitions need to be slightly modified. For example, one should insert multipliers $\lambda(x)$ into the sum in \eqref{def-Q}.

\end{enumerate}
\end{remark}

\section{Local limits of random trees and decomposing large random trees into small ones}

In this section, we consider random trees (GW-trees) conditioned on size. Recall that we fix the offspring distribution $\mu$ and always assume that $\mu$ is critical (with mean one) and with finite variance $\sigma^2>0$. For simplicity, we assume further that $\mathrm{span}(\mu)=1$ and leave the minor modifications when $\mathrm{span}(\mu)>1$ to the reader. To make our arguments smooth, we concentrate mainly on the case when $\mu$ has finite exponential moments i.e.\ $\sum_{i\in\N}c^i\mu(i)<\infty$, for some $c>1$. However, for the local limit result of random trees, we do not need this assumption. We write $\Tbf$ for an unconditioned GW-tree and $\Tfini$ for the sample space, i.e.\ the set of finite rooted order trees. We define $\Tbf^n$, called the GW-tree with size $n$, as $\Tbf$ conditioned on $|\Tbf|=n$ (recall that, for a tree $\tbf$, we also write $\tbf$ for its vertex set). For any $\tbf\in\Tfini$, we adopt the depth-first search order (from the root) on $\tbf$. We write $V_i(\tbf)$ (even $V_i$ when $\tbf$ is obvious) for the $i$-th vertex of $\tbf$ starting from
$V_0(\tbf)=o$, the root of $\tbf$. The main tool we will use is the so-called Lukasiewisz walks (L-walks). For any $\tbf\in\Tfini$ with $|\tbf|=n$, its L-walk $\mathbf{L}_\tbf:\{0,1,\dots,n\}\rightarrow \Z^d$ is defined by $\mathbf{L}_\tbf(0)=0$ and $\mathbf{L}_\tbf(k)=\sum_{i=0}^{k-1}(\dout(V_i(\tbf))-1)$, for $k\in[[1, n]]$. It is easy to check that the L-walk of the GW-tree $\Tbf$ is distributed as a random walk on $\Z$ with jump distribution $\mu_{-1}$ given by $\mu_{-1}(j)=\mu(j+1)$ for every $j\geq-1$, which starts at $0$ and stops at the fist visiting time of $-1$. In particular, $|\Tbf|$ has the same distribution as the fist visiting time of such a random walk.

\subsection{Some estimates on L-walks}
In this subsection we collect useful lemmas about L-walks.

For a sequence $\mathbf{x}=(x_1,\dots,x_n)$, the walk with step $\mathbf{x}$ is just the partial sum sequence $s_0=0,s_1,\dots,s_n$ where $s_j=\sum_{i=1}^j x_i$. Say the walk fist visits $b$ at time $k$ if $s_i\neq b$ for $i<k$ and $s_k=b$. For $i\in\{1,\dots,n\}$, the $i$-th cyclic shift of $\mathbf{x}$, denoted by $\mathbf{x}^{(i)}$, is the sequence of length $n$ whose $j$-th term is $x_{i+j}$ with $x_{i+j}=x_{i+j-n}$ for $i+j>n$.
\begin{lemma}\label{Kemperman}
Let $\mathbf{x}=(x_1,\dots,x_n)$ be a sequence with values in $\Z$ and sum $-k$ for some $1\leq k\leq n$. Then there are at most $k$ distinct $i\in\{1,\dots,n\}$ such that the walk with step $\mathbf{x}^{(i)}$ fist visits $-k$ at time $n$. Moreover, if the values of $x_i$ are in $\{-1,0,1,2,\dots\}$, then there are exactly $k$ distinct such $i\in\{1,\dots,n\}$.
\end{lemma}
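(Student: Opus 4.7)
My plan is to reformulate the ``good'' condition in terms of the partial sums $s_0 = 0, s_1, \dots, s_n$ of the original sequence $\mathbf{x}$, and then to exploit this characterization for both parts. Writing out the partial sums of $\mathbf{x}^{(i)}$ and splitting them into a ``pre-wrap'' block (indices $j$ with $i+j \leq n$) and a ``post-wrap'' block (the latter shifted by $-k$ via $s_n = -k$), one checks that an index $i \in \{0, 1, \dots, n-1\}$ is good exactly when (A) $s_l > s_i$ for every $l \in \{0, \dots, i-1\}$, so that $s_i$ is a strict prefix minimum up to time $i$, and (B) $s_l > s_i - k$ for every $l \in \{i+1, \dots, n-1\}$, so that the remainder of the walk stays strictly above level $s_i - k$.

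Given this characterization, the upper bound follows by a short pigeonhole. For any two good indices $i_1 < i_2$, condition (B) at $i_1$ applied with $l = i_2$ gives $s_{i_2} > s_{i_1} - k$, while condition (A) at $i_2$ applied with $l = i_1$ gives $s_{i_1} > s_{i_2}$. Hence $s_{i_2}$ lies in the $(k-1)$-element set $\{s_{i_1} - k + 1, \dots, s_{i_1} - 1\}$. Iterating over $m$ good indices $i_1 < i_2 < \dots < i_m$, the values $s_{i_1} > s_{i_2} > \dots > s_{i_m}$ form a strictly decreasing sequence of integers confined to a window of width at most $k - 1$, forcing $m \leq k$.

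For the lower bound in the case $x_i \in \{-1, 0, 1, 2, \dots\}$, I plan to produce $k$ explicit good indices using that the walk now decreases by at most one unit per step. Let $M = \min_{0 \leq j \leq n-1} s_j$; the step bound together with $s_n = -k$ forces $s_{n-1} \leq -k+1$, and hence $M \leq -k+1$, so $M + k - 1 \leq 0$. Moreover, since the walk starts at $0$ and reaches $M$ by decrements of size at most $1$, it visits every integer in $[M, 0]$. For each $v \in \{M, M+1, \dots, M+k-1\}$ let $\tau_v$ be the first index at which $s_{\tau_v} = v$. I would then verify (A) and (B) at $i = \tau_v$: for (A), any $l < \tau_v$ has $s_l \neq v$ by minimality, and $s_l < v$ is impossible because the walk cannot cross strictly below $v$ without first touching $v$ (by the one-step-down constraint), so $s_l > v$; for (B), one has $s_l \geq M \geq v - k + 1 > v - k$ for every $l \in \{0, \dots, n-1\}$. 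The indices $\tau_M, \tau_{M+1}, \dots, \tau_{M+k-1}$ are then $k$ distinct good indices, and combined with the upper bound this gives exactly $k$.

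The crux is the lower bound, where the assumption $x_i \geq -1$ is used in two essential ways: it guarantees $M + k - 1 \leq 0$ so that each chosen value $v$ is actually attained by the walk, and it forces the walk to visit every intermediate integer, which both makes the first-passage times $\tau_v$ well-defined and automatically secures condition (A). This argument is essentially the classical cycle lemma of Dvoretzky--Motzkin / Kemperman.
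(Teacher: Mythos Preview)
Your proof is correct and follows a genuinely different route from the paper's. The paper argues in two steps: for the case $x_i \in \{-1,0,1,\dots\}$ it simply invokes Kemperman's formula (citing Pitman's lecture notes), and for the general case it replaces every step $x_i = -j < -1$ by $j$ consecutive $-1$'s, observes that the refined sequence has exactly $k$ good cyclic shifts by the special case, and notes that each good shift of the original sequence corresponds injectively to a good shift of the refinement --- hence at most $k$. Your argument, by contrast, is self-contained: you translate the good-shift condition into the partial-sum characterization (A)--(B), obtain the upper bound by a pigeonhole on the strictly decreasing integers $s_{i_1} > \cdots > s_{i_m}$ trapped in an interval of length $k$, and produce the lower bound by exhibiting the $k$ first-passage times to the levels $M, M+1, \dots, M+k-1$. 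Your route is more elementary (no external citation) and yields an explicit description of the good indices in the $\geq -1$ case; the paper's reduction is slicker and makes it transparent why the bound is sharp only under the step restriction, since the refinement can create new good shifts but never destroys existing ones.
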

\begin{proof}
When $x_i\in \{-1,0,1,\dots\}$, it follows from Kemperman's formula (see Lemma~6.1 \cite{P06C}). For the general case, one can replace each $x_i=-j\in\{-2,-3,\dots\}$ by $j$ successive $-1$. Then the new sequence has exactly $k$ cyclic shifts which first visit $-k$ at the end. Therefore the original sequence has at most $k$ cyclic shifts which first visit $-k$ at the end.
\end{proof}

Let $L(n)=\sum_{1}^n X_i$ where $X_i$ are i.i.d. with distribution $\mu_{-1}$. Set $H=H_{-1}=\inf\{k\geq0, L(k)=-1\}$ the fist visiting time of $-1$. By the lemma above, we get
$$
P(|\Tbf|=n)=P(H=n)=\frac{1}{n}P(L(n)=-1).
$$
If $(\Tbf_i)_{i\in \N^+}$ are independent copies of $\Tbf$, by concatenating their L-walks, one can get
\begin{equation}\label{eq-Kem}
  P(\sum_1^m |\Tgw_i|=n)=P(H_{-m}=n)=\frac{m}{n}P(L_n=-m),
\end{equation}
where $H_{-m}$ is the fist visiting time of $-m$.

We also need standard estimates for random walks. For $x\in\Z, n\in\N^+$, write
$$
\overline{p}_n(x)=\frac{1}{\sqrt{2\pi}\sigma}\frac{1}{\sqrt{n}}\exp(-\frac{x^2}{2n\sigma^2}).
$$

\begin{lemma}\label{lem-rw}
(See Theorem~2.3.9, Theorem~2.3.11 and Corollary~A.2.7 in \cite{LL10R} and Lemma~2.1 in \cite{J06R}) There exists $c$, such that
\begin{align*}
P(L(n)=x)=\overline{p}_n(x)+o(\frac{1}{\sqrt{n}}),\\
 P(L(n)=x)\preceq \frac{1}{\sqrt{n}}\exp(-\frac{cx^2}{n}),\;\mathrm{when~}x\leq 0.
\end{align*}
Moreover, if $\mu$ has finite exponential moments, then for some $c_1,c_2$,
\begin{eqnarray*}
P[L(n)=x]=\overline{p}_n(x)\exp\left(O(\frac{1}{\sqrt{n}}+\frac{|x|^3}{n^2})\right), \;\mathrm{when~}|x|\leq c_1n,\\
P[\max_{0\leq j\leq n}L(j)\geq r\sqrt{n}]\leq \exp(-c_2r^2),\; \mathrm{when~}r\in[0,c_1\sqrt{n}].
\end{eqnarray*}
\end{lemma}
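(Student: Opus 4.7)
All four assertions are classical random-walk estimates for the centered lattice walk with step law $\mumo(j)=\mu(j+1)$, $j\geq -1$. Since $\mathrm{span}(\mu)=1$ by assumption, $\mumo$ is aperiodic, has mean $0$ and variance $\sigma^2$; when $\mu$ has finite exponential moments so does $\mumo$, and the moment generating function $\psi(\lambda)=E[e^{\lambda X_1}]$ is analytic on a complex neighbourhood of the origin. My plan is to treat each of the four bounds by a standard route; let $\phi(\theta)=\sum_{j}\mumo(j)e^{i\theta j}$.

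For part (1) I will use the textbook Fourier-inversion proof of the lattice local CLT: starting from
$$
P(L(n)=x)=\frac{1}{2\pi}\int_{-\pi}^{\pi}e^{-i\theta x}\phi(\theta)^n\,d\theta,
$$
I split the integral at $|\theta|=n^{-1/3}$; on the small-$\theta$ region I Taylor-expand $\log\phi(\theta)=-\sigma^2\theta^2/2+o(\theta^2)$ and rescale $\theta=\eta/\sqrt n$ to recover $\overline p_n(x)+o(1/\sqrt n)$ uniformly in $x$, while aperiodicity forces $|\phi(\theta)|\leq 1-c$ on the complement, making that contribution super-polynomially small. For part (2) the bound from (1) already settles $|x|\leq C\sqrt n$, since there $1/\sqrt n\asymp\overline p_n(x)$; for $-n\leq x\leq -C\sqrt n$ I will pass to the exponentially tilted measure with parameter $\lambda(x/n)<0$ chosen so that the tilted mean equals $x/n$ (such a $\lambda$ exists because $\psi(\lambda)$ is finite for all $\lambda\leq 0$, thanks to $X_1\geq -1$), apply part (1) to the tilted walk, and translate the estimate back. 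The rate function $I(v)=v^2/(2\sigma^2)+O(v^3)$ then delivers the stated Gaussian bound; for $|x|>n$ the event is empty.

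Part (3) is the Cramér-type refinement of the local CLT. The plan is to tilt by $\lambda=\lambda(x/n)$ so that the tilted walk is centered at $x$, and to run a saddle-point analysis of
$$
P(L(n)=x)=\frac{\psi(\lambda)^n e^{-\lambda x}}{2\pi}\int_{-\pi}^{\pi}e^{-i\theta x}\widetilde\phi_\lambda(\theta)^n\,d\theta,
$$
where $\widetilde\phi_\lambda$ is the characteristic function of the tilted law; combined with the expansion $\lambda x-n\log\psi(\lambda)=-x^2/(2n\sigma^2)+O(|x|^3/n^2)$ near the origin, this yields the claimed multiplicative error $\exp(O(|x|^3/n^2+1/\sqrt n))$, uniformly in the range $|x|\leq c_1 n$ dictated by the radius of analyticity of $\psi$. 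Finally, part (4) follows from Doob's maximal inequality applied to the positive martingale $M_k=\exp(\lambda L(k))/\psi(\lambda)^k$: for any admissible $\lambda>0$ one obtains
$$
P\Bigl(\max_{0\leq k\leq n}L(k)\geq r\sqrt n\Bigr)\leq \exp\bigl(-\lambda r\sqrt n+n\log\psi(\lambda)\bigr),
$$
and choosing $\lambda=r/(\sigma^2\sqrt n)$ together with $\log\psi(\lambda)\leq\sigma^2\lambda^2/2+C\lambda^3$ produces $\exp(-c_2r^2)$ for every $r\leq c_1\sqrt n$, provided $c_1$ is small enough that the cubic term stays subdominant.

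The main technical obstacle will be the uniform bookkeeping in part (3): one has to track the saddle-point shift and the cubic remainder in $\log\psi$ simultaneously to produce the specific error form $\exp(O(|x|^3/n^2+1/\sqrt n))$, and this is precisely where the exponential-moment hypothesis becomes indispensable, since it is what opens the strip of analyticity on which the saddle-point computation and the remainder estimates are uniform in $x$ and $n$.
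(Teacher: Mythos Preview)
The paper does not prove this lemma at all: it is stated with explicit citations to Lawler--Limic (Theorems~2.3.9, 2.3.11, Corollary~A.2.7) and Janson (Lemma~2.1) and then used as a black box. Your outline is a correct sketch of the standard arguments behind those cited results---Fourier inversion for the local CLT, exponential tilting plus a local estimate for the Gaussian upper bound on the negative side (using that $X_1\ge -1$ so $\psi(\lambda)<\infty$ for all $\lambda\le 0$ even without the exponential-moment hypothesis), the Cram\'er saddle-point refinement for part~(3), and the exponential-martingale maximal inequality for part~(4). One small point worth cleaning up in part~(2): the expansion $I(v)=v^2/(2\sigma^2)+O(v^3)$ only covers $v$ near $0$; to cover all $v\in[-1,0]$ you should rather appeal to strict convexity of $I$ with $I(0)=0$, which gives $I(v)\ge c v^2$ uniformly on the compact interval and hence $nI(x/n)\ge c x^2/n$ as required.
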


By this lemma and \eqref{eq-Kem}, we get
\begin{equation}\label{est-tree-size1}
P[|\Tgw|=n]\sim\frac{1}{\sqrt{2\pi}\sigma n^{3/2}},
\end{equation}
\begin{equation}\label{est-tree-size2}
P[\sum_1^m |\Tgw_i|=n]\preceq \frac{m}{n^{3/2}} \exp(-\frac{cm^2}{n}),
\end{equation}
\begin{equation}\label{est-tree-size3}
P[\sum_1^m |\Tbf_i|=n]  \sim\frac{1}{\sqrt{2\pi}\sigma } \frac{m}{n^{3/2}}\exp(-\frac{m^2}{2n\sigma^2}), \;  \mathrm{when~ }m=O(\sqrt{n}).
\end{equation}

Since we are interested in GW-trees with given sizes, from the view of L-walks, we need to study the random walk $L=(L(i))_{i\in\N}$ conditioned on $H_{-1}=n$. Write $P^n$ for the law of $L$ conditioned on $H_{-1}=n$.

\begin{lemma}\label{ineq-L}
Assume that $\mu$ has finite exponential moments. Then, there exists $c>0$ such that, for $m\in[[1,n-1]]$ and $h\in [[0,cA]]$ with $A=m\wedge(n-m)$,
\begin{align*}
&P^n[L(m)=h]\preceq \frac{(h+1)^2}{A^{3/2}}\exp (-c\frac{(h+1)^2}{A}),\\
&P^n[L(m)\leq h]\preceq (\frac{h+1}{\sqrt{A}})^3,\\
&P^n[L(m)\geq h]\preceq \exp(-c\frac{(h+1)^2}{A}).
\end{align*}
\end{lemma}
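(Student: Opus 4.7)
My plan is to reduce the conditional probability under $P^n$ to unconditional random walk probabilities via the Markov property at time $m$, then handle each factor using Kemperman's formula and the local CLT from Lemma~\ref{lem-rw}. More precisely, I first decompose
\begin{equation*}
P[L(m)=h,\,H_{-1}=n] \;=\; P[L(m)=h,\,H_{-1}>m]\cdot P[H_{-1-h}=n-m],
\end{equation*}
and then rewrite the two first-passage probabilities using Kemperman's formula (Lemma~\ref{Kemperman}):
\begin{equation*}
P[H_{-1}=n]=\tfrac{1}{n}P[L(n)=-1],\qquad P[H_{-1-h}=n-m]=\tfrac{h+1}{n-m}P[L(n-m)=-1-h].
\end{equation*}

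The next step, which I expect to be the main obstacle, is a ballot-type bound of the form
\begin{equation*}
P[L(m)=h,\,L(k)\geq 0 \text{ for all }0\leq k\leq m] \;\preceq\; \tfrac{h+1}{m+1}\,P[L(m)=h].
\end{equation*}
This captures the repulsion from the origin caused by the positivity constraint and is what will produce the extra factor $(h+1)$ in the final bound. I would prove it by a cycle-lemma argument on a modified walk: append a virtual step bringing the walk from $h$ down to $-1$, or equivalently reverse time and apply Kemperman to the resulting skip-free trajectory ending at its global maximum $h$. Alternatively, one can cite standard fluctuation-theoretic estimates for centered walks with finite variance whose negative jumps are bounded by $1$, where the renewal function of strict descending ladder heights is linear in $h+1$.

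Combining the three ingredients yields
\begin{equation*}
P^n[L(m)=h]\preceq \frac{(h+1)^2\,n}{m(n-m)}\cdot\frac{P[L(m)=h]\cdot P[L(n-m)=-1-h]}{P[L(n)=-1]}.
\end{equation*}
Now I invoke Lemma~\ref{lem-rw}: under finite exponential moments, for $h\leq cA$ with $c$ small, each of $P[L(m)=h]$, $P[L(n-m)=-1-h]$, $P[L(n)=-1]$ is of order $\overline{p}$, so the ratio on the right is $\preceq A^{-1/2}\exp(-c(h+1)^2/A)$ using $\tfrac{1}{m}+\tfrac{1}{n-m}\geq \tfrac{1}{A}$. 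Combined with $\tfrac{n}{m(n-m)}=\tfrac{1}{m}+\tfrac{1}{n-m}\preceq \tfrac{1}{A}$, this gives the first inequality
\begin{equation*}
P^n[L(m)=h]\preceq \frac{(h+1)^2}{A^{3/2}}\exp\!\Big(-c\,\tfrac{(h+1)^2}{A}\Big).
\end{equation*}

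The second and third inequalities follow by summation. For the second, bounding the exponential by $1$ and summing $\sum_{j=0}^{h}(j+1)^2\preceq (h+1)^3$ gives $P^n[L(m)\leq h]\preceq ((h+1)/\sqrt A)^3$. For the third, keeping the Gaussian factor and comparing the sum $\sum_{j\geq h}\tfrac{(j+1)^2}{A^{3/2}}\exp(-c(j+1)^2/A)$ to a Gaussian integral via the substitution $u=(j+1)/\sqrt A$ yields $\preceq\exp(-c(h+1)^2/A)$ for $h\geq\sqrt A$, while the bound is trivial for $h\leq\sqrt A$ since the right-hand side is bounded below by a positive constant. The restriction $h\leq cA$ (with $c$ small enough) is exactly what ensures that the large-deviation error in Lemma~\ref{lem-rw} stays under control in the regime where the LCLT is applied.
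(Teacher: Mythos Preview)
Your proposal is correct and follows the same route as the paper: Markov split at time $m$, Kemperman's formula for $P[H_{-1}=n]$ and $P[H_{-1-h}=n-m]$, a ballot-type bound for $P[L(m)=h,\;L\geq 0\text{ on }[[0,m]]]$, then Lemma~\ref{lem-rw} and summation for the last two inequalities. One small caveat on the ballot step: appending a single ``virtual'' step of size $-(h+1)$ does not work as stated, since that step is not in the support of $\mu_{-1}$; the paper's fix is to \emph{prepend} a fixed up-step $j\geq 1$ with $\mu_{-1}(j)>0$ and then time-reverse, which turns the positivity constraint into a first-passage event for the reversed walk and makes Lemma~\ref{Kemperman} directly applicable, yielding exactly your $\frac{h+1}{m+1}P[L(m)=h]$-type bound. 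Your alternative via ladder-height fluctuation theory would also work but is less elementary than this one-line trick.
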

\begin{proof}
\begin{align*}
P^n(L(m)=h)&=\frac{P[H=n,L(m)=h]}{P[H=n]}\\
=&\frac{P\left[L(m)=h;L(i)\geq 0, i\in[[0,m]]\,\right]P_h[H =n-m]}{P[H=n]},
\end{align*}
where $P_h$ is the law of $L$ when staring at $h$
and we use the Markov property in the last line.

By the equations after Lemma~\ref{Kemperman}, we have
\begin{align*}
P_h[H =n-m]&=P(H_{-h-1}=n-m)\\
=&\frac{h+1}{n-m}P(L(n-m)=-h-1)\preceq \frac{h+1}{(n-m)^{3/2}}\exp(-c\frac{(h+1)^2}{n-m}),\\
&\quad\quad\quad P[H=n]\sim\frac{1}{n^{3/2}}.
\end{align*}
For the other term, by inserting one up-step and then reversing the walk, one can see (for any fixed $j$ with $\mu_{-1}(j)>0$)
\begin{align*}
&P\left[L(m)=h;L(i)\geq 0, i\in[[0,m]]\,\right]\\
\leq &P \left[L(m+1)=h+j;L(i)\geq j, i\in[[1,m+1]]\,\right]/\mu_{-1}(j)\\
\leq&P_{h+j}\left[L^-(m+1)=0;L^-(i)\geq 1, i\in[[0,m]]\,\right]/\mu_{-1}(j)\\
\preceq&\frac{h+j}{m+1}P(L^-(m+1)=-h-j)\preceq\frac{h+1}{m^{3/2}}\exp\Big(-c\frac{(h+1)^2}{m}\Big),
\end{align*}
where $L^-$ is the reversed random walk (i.e.\ with jump distribution $\mu_{-1}^-(\bullet)\doteq\mu_{-1}(-\bullet)$) and in the last line we apply Lemma \ref{Kemperman} and Lemma~\ref{lem-rw}.

From what have been proved, it follows that
\begin{align*}
  P^n(L(m)=h)&\preceq \frac{\frac{(h+1)^2}{(m(n-m))^{3/2}}\exp\big(-c(\frac{(h+1)^2}{m}+\frac{(h+1)^2}{n-m})\big)}{n^{-3/2}}\\
  &\preceq \frac{(h+1)^2}{A^{3/2}}\exp \Big(-c\frac{(h+1)^2}{A}\Big).
\end{align*}
This completes the proof of the first assertion. By summation and the last lemma, one can get the other assertions.
\end{proof}

\begin{lemma}\label{lem-Lw-leaf}
Let $k=k(n)\in [[1,n-1]]$ with $A=A(n)=k(n)\wedge (n-k(n))\rightarrow \infty$. Then
$$
\lim_{n\rightarrow\infty}P(V_k \mathrm{~in~}\Tbf^n\mathrm{~is~a~leaf})= \mu(0).
$$
Moreover, if $\mu$ has finite exponential moments, then
$$
\sup_{n,k\in\N^+:A<k<n-A}P(V_k \mathrm{~in~}\Tbf^n\mathrm{~is~a~leaf})=\mu(0)\Big(1+O(\frac{1}{A^{3/8}})\Big).
$$
\end{lemma}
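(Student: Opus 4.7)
The plan is to translate the event $\{V_k \text{ is a leaf}\}$ into a statement about the L-walk $\mathbf{L}$ of $\Tbf^n$. Since $V_k$ is a leaf iff $\dout(V_k) = 0$ iff $\mathbf{L}(k+1) - \mathbf{L}(k) = -1$, the quantity of interest is $P^n[\mathbf{L}(k+1) - \mathbf{L}(k) = -1]$. Decomposing the joint event $\{L(k) = h, L(k+1) = h-1, H_{-1} = n\}$ via the Markov property of the unconditioned walk and applying Kemperman's formula to each hitting-time probability (the identity just after Lemma~\ref{Kemperman}) would yield
\begin{equation*}
P^n[\mathbf{L}(k+1) - \mathbf{L}(k) = -1] \;=\; \mu(0)\, E^n[g(\mathbf{L}(k))],
\end{equation*}
where, for $h \geq 1$,
\begin{equation*}
g(h) \;=\; \frac{h}{h+1} \cdot \frac{n-k}{n-k-1} \cdot \frac{P[L(n-k-1) = -h]}{P[L(n-k) = -h-1]},
\end{equation*}
and $g(0) = 0$. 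A useful consequence is the uniform $P^n$-a.s.\ bound $g(\mathbf{L}(k)) \leq 1/\mu(0)$, obtained from $\mu(0) g(h) P^n[\mathbf{L}(k)=h] = P^n[\mathbf{L}(k)=h, \mathbf{L}(k+1)=h-1] \leq P^n[\mathbf{L}(k)=h]$.

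For the first (qualitative) assertion, I would show $g(\mathbf{L}(k)) \to 1$ in $P^n$-probability as $A \to \infty$ and then apply bounded convergence. The first part of Lemma~\ref{lem-rw} gives $P[L(n-k-1) = -h]/P[L(n-k) = -h-1] \to 1$ uniformly in $h$ on windows of the form $[\eta \sqrt{A}, R \sqrt{A}]$, while the convergence of the rescaled Lukasiewicz excursion to a Brownian excursion (or direct estimates via Kemperman combined with Lemma~\ref{lem-rw}) shows $\mathbf{L}(k)/\sqrt{A}$ is tight and bounded away from $0$ in probability under $P^n$. Combined with $h/(h+1) \to 1$ on such windows, these imply the convergence in probability of $g(\mathbf{L}(k))$ to $1$, and the uniform bound $g \leq 1/\mu(0)$ promotes this to convergence of expectations.

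For the quantitative assertion, under the exponential-moment hypothesis I would split $E^n[g(\mathbf{L}(k))]$ at cutoffs $M = A^{3/8}$ and $B = A^{1/2 + 1/24}$. Lemma~\ref{ineq-L} gives $P^n[\mathbf{L}(k) < M] \preceq M^3/A^{3/2} = A^{-3/8}$ on the low range (so its contribution to $E^n[g]$ is $O(A^{-3/8})$ after using $g \leq 1/\mu(0)$) and $P^n[\mathbf{L}(k) > B] \preceq \exp(-c A^{1/12})$ on the high range (negligible). On the middle range $M \leq h \leq B$, the refined multiplicative local CLT of Lemma~\ref{lem-rw}, $P[L(m)=x]=\bar p_m(x)\exp(O(m^{-1/2}+|x|^3 m^{-2}))$, together with $\frac{h}{h+1} \leq 1$, yields $g(h) \leq 1 + O(A^{-11/24})$ after cancellation in the Gaussian ratio. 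Summing over the three ranges gives $P^n[\mathbf{L}(k+1)-\mathbf{L}(k)=-1] \leq \mu(0)(1 + O(A^{-3/8}))$, as required. The main obstacle is this quantitative step: the cutoff exponent $3/8$ is precisely the one that balances the small-$h$ mass (of order $M^3/A^{3/2}$ from Lemma~\ref{ineq-L}) against the $h^3/A^2$ local CLT error in the bulk, and a careful calibration of these competing errors is the delicate part of the argument.
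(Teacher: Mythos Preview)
Your approach is the paper's: both condition on $\mathbf{L}(k)=h$, use Kemperman's formula to rewrite the conditional leaf-probability as $\mu(0)\,g(h)$ with exactly your $g$, restrict $h$ to a window via Lemma~\ref{ineq-L} (the paper uses $[A^{3/8},\,C\sqrt{A\log A}]$), and control $g$ on that window via the refined local CLT of Lemma~\ref{lem-rw}. Two small slips to fix: on your middle range $h\le A^{13/24}$ the local-CLT error term $h^{3}/(n-k)^{2}$ is of order $A^{39/24-2}=A^{-3/8}$, not $A^{-11/24}$, so $g(h)=1+O(A^{-3/8})$ there (still exactly what is needed; the paper's tighter upper cutoff makes this term $o(A^{-3/8})$); and the statement is two-sided, so you also need the matching lower bound, which follows symmetrically since $g(h)\ge \tfrac{h}{h+1}(1-O(A^{-3/8}))\ge 1-O(A^{-3/8})$ on the middle range.
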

\begin{proof}
It is standard that for any $\epsilon>0$, we can find $\delta\in(0,1/2)$, such that (when $A$ is sufficiently large)
$$
P^n(L(k)\in[\delta\sqrt{A},\frac{1}{\delta}\sqrt{A}]^\mathrm{c})<\epsilon.
$$
Conditionally on the event $\mathbf{L}_{\Tbf^n}(k)=h$, we have (when $h\in[[\delta\sqrt{A},\delta^{-1}\sqrt{A}]]$)
\begin{align*}
P&[V_k \mathrm{~in~}\Tbf^n\mathrm{~is~a~leaf}|\mathbf{L}_{\Tbf^n}(k)=h]=\frac{P[H=n,L(k)=h,L(k+1)=h-1]}{P[H=n,L(k)=h]}\\
=&\frac{\mu(0)P_{h-1}[H=n-k-1]}{P_{h}[H=n-k]}=\frac{\mu(0)\frac{h}{n-k-1}P(L(n-k-1)=-h)}{\frac{h+1}{n-k}P(L(n-k)=-h-1)}\rightarrow \mu(0),
\end{align*}
where we use the Markov property and Lemma~\ref{lem-rw}. The proof of the first assertion is complete.

When $\mu$ has finite exponential moments. By the last lemma, we can find large $C$, such that
$$
P^n[L(k)\in[A^{3/8}, C\sqrt{A\log A}]^\mathrm{c}]\preceq \frac{1}{A^{3/8}}.
$$
Similarly, we have (when $h\in[[A^{3/8}, C\sqrt{A\log A}]]$)
\begin{align*}
P&(V_k \mathrm{~in~}\Tbf^n\mathrm{~is~a~leaf}|L_{\Tbf^n}(k)=h)=\frac{\mu(0)\frac{h}{n-k-1}P(L(n-k-1)=-h)}{\frac{h+1}{n-k}P(L(n-k)=-h-1)}\\
&=\mu(0)\frac{h}{h+1}\frac{n-k}{n-k-1}\frac{\overline{p}_{n-k-1}(-h)(1+O(1/(n-k-1)^{0.4}))}{\overline{p}_{n-k}(-h-1)(1+O(1/(n-k)^{0.4}))}=\mu(0)(1+O(\frac{1}{A^{3/8}})).
\end{align*}
We finish the proof of the second assertion.
\end{proof}

\subsection{Local limits of large Galton-Watson trees at a prefixed vertex}

Let us introduce an unrooted random tree $\Tl$, which appears in \cite{A91A,S18L}. Intuitively, $\Tl$ is $\Tbf^\mathrm{v}$ re-rooted at infinity. Formally, $\Tl$ is defined as follows. Start with a semi-infinite path $u_0,u_1,\dots$. Let $u_0$ be the root of an independent copy of the unconditioned GW-tree. For each $i\geq 1$, $u_i$ produces, independently, a random number of children according to $\musb$, the size-biased probability measure of $\mu$. Let $u_{i-1}$ be identified with a uniformly chosen child of $u_i$ and all other children of $u_i$ become the root of an independent copy of the unconditioned GW-tree.

We follow the notations used in \cite{S18L}. Let  $\tbf\in\T_\mathrm{f}$, $v\in \tbf$, and $l\in \N$. When the vertex $v$ has an $l$-th ancestor $v_l$, we define the pointed plane tree $H_l(\tbf,v)$ as $[\tbf]_{v_l}$ (we write $[\tbf]_v$ for the subtree of $\tbf$ generated by all descendants of $v$, rooted at $v$), pointed at $v$. Otherwise, when the vertex $v$ has no $l$-th ancestor, we simply set $H_l(\tbf,v)=\diamond$ for some inessential symbol $\diamond$. Let $H_l(\Tl,u_0)$ be the subtree of $\Tl$ generated by all descendants of $u_l$, rooted at $u_l$ and pointed at $u_0$.

We now state our theorem.
\begin{theorem}\label{thm-ll}
For any $k=k(n)\in[[1,n/2]]$ and $l=l(n)\in[[0,k(n)]]$ satisfying $\lim_{n\rightarrow \infty}k(n)=\infty$ and $\lim_{n\rightarrow \infty}k(n)/ (l(n)+1)^{2}=\infty$. Then we have
$$
\lim_{n\rightarrow \infty}\sup_{m\in[[k(n),n-k(n)]]}
\dTV(H_{l}\left(\Tbf^n,V_m),H_{l}(\Tl,u_0)\right)=0,
$$
where $\dTV(\bullet,\bullet)$ is the total variance distance.
Moreover, if $\mu$ has finite exponential moments, then we have the following bounds for the error term:
\begin{align*}
    \sup_{m\in[[k(n),n-k(n)]]}&\dTV(H_{l}\left(\Tbf^n,V_m),H_{l}(\Tl,u_0)\right)\\
    \preceq&\left\{\begin{array}{cc}
        \frac{1}{k^{3/8}}+\left(\frac{l+1}{\sqrt{n}}\right)^{2/3}, & \mathrm{~when~}((l+1)n)^{2/3}\leq k; \\
        \frac{l+1}{\frac{1}{k^{3/8}}+\sqrt{k}}+\frac{k}{n}, & \mathrm{~when~}((l+1)n)^{2/3}\geq k.\\
    \end{array}\right.
\end{align*}

\end{theorem}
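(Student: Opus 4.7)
The plan is to encode both $H_l(\Tbf^n,V_m)$ and $H_l(\Tl,u_0)$ through Lukasiewicz walks and reduce the theorem to a local limit statement for the L-walk. Under $P^n$, the L-walk $L=(L(k))_{0\le k\le n}$ of $\Tbf^n$ is a random walk with step distribution $\mumo$ started at $0$ and conditioned on $H_{-1}=n$. The $l$-th ancestor of $V_m$, when it exists, corresponds to the $l$-th \emph{strict descending record of $L$ read backward from $m$}, i.e.\ the $l$-th index $\tau_l<m$ satisfying $L(\tau_l)<\min_{\tau_l<k\le m}L(k)$. The full subtree $[\Tbf^n]_{V_{\tau_l}}$ is encoded by the excursion $(L(k)-L(\tau_l))_{\tau_l\le k\le\sigma_l}$, where $\sigma_l=\inf\{k>\tau_l:L(k)=L(\tau_l)-1\}$, with a marker at position $m-\tau_l$; hence $H_l(\Tbf^n,V_m)$ is a measurable functional of $L|_{[\tau_l,\sigma_l]}$, and equals $\diamond$ exactly when fewer than $l$ backward descending records are available before time $0$. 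The analogous encoding of $H_l(\Tl,u_0)$ uses a two-sided $\mumo$-walk $\widetilde L$ with $\widetilde L(0)=0$: the $l$-th backward descending record from $0$ marks the $l$-th spine vertex and the corresponding excursion encodes the subtree at it. This is the classical identification of the infinite GW-tree with a size-biased spine in terms of the descending ladder structure of an integer-valued mean-zero walk.

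The first step is to control the random quantities $S_l:=\sigma_l-\tau_l$ and $L(m)-L(\tau_l)$ under $P^n$. Lemma~\ref{ineq-L} gives concentration of $L(m)$ on the scale $\sqrt A$ with Gaussian tails; combined with Kemperman's identity \eqref{eq-Kem} and the estimates of Lemma~\ref{lem-rw} applied to the excursion length, this yields $S_l=O((l+1)^2)$ with polynomial tails, so the hypothesis $k/(l+1)^2\to\infty$ forces $S_l\ll A$ with high probability. By essentially the same ladder/first-passage estimates as in Lemma~\ref{lem-Lw-leaf}, the event that $V_m$ admits at least $l$ ancestors has probability $1-O\bigl((l+1)/\sqrt k+k^{-3/8}\bigr)$; both contributions will re-appear in the final error bound.

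The second step is the local CLT comparison. By the Markov property of $L$ at times $\tau_l$ and $\sigma_l$ together with the decomposition of the event $\{H_{-1}=n\}$ into events on $[0,\tau_l]$, $[\tau_l,\sigma_l]$ and $[\sigma_l,n]$, the ratio $P^n[L|_{[\tau_l,\sigma_l]}=\omega]/P[L|_{[\tau_l,\sigma_l]}=\omega]$ equals the product of a prefix factor and a suffix factor, each being (up to normalisation by $P[H_{-1}=n]\asymp n^{-3/2}$) a first-passage probability. The sharp local CLT of Lemma~\ref{lem-rw} gives, under finite exponential moments, a multiplicative error of the form $\exp\bigl(O(s^{-1/2}+|x|^3/s^{2})\bigr)$, so that on the typical event from Step~1 the ratio equals $1+O\bigl(S_l/A+|L(m)|^3/A^{2}+A^{-1/2}\bigr)$, uniformly in the admissible $\omega$. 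On the unconditioned side, the analogous factorisation identifies the excursion straddling time $m$ with the excursion of $\widetilde L$ around $0$ that encodes $H_l(\Tl,u_0)$. Coupling these two descriptions yields the TV bound.

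The main obstacle will be the sharp two-regime error bound. In the regime $((l+1)n)^{2/3}\le k$ one chooses a cutoff $S_\ast\in((l+1)^2,A)$ and optimises the trade-off between $P[S_l>S_\ast]$ (polynomial in $S_\ast$ via Kemperman) and the local CLT error $S_\ast/\sqrt n$; this balance produces the $((l+1)/\sqrt n)^{2/3}$ term, while the $k^{-3/8}$ contribution enters via the leaf-type argument from Lemma~\ref{lem-Lw-leaf}. In the complementary regime the dominant error is the rare event that the $l$ backward descending records do not all lie within $[0,k]$, which is $O((l+1)/\sqrt k)$ by the descending-ladder tail, while the $k/n$ term reflects the correction from conditioning on $H_{-1}=n$ when the window $S_l$ can reach as far as $k$. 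Carefully balancing the cutoff against the CLT error and simultaneously tracking the tails of $S_l$, of $|L(m)|$, and of the multiplicative CLT error factor across both regimes is the most delicate book-keeping in the proof.
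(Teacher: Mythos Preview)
Your approach via backward descending records and the random excursion $[\tau_l,\sigma_l]$ is a genuinely different route from the paper's, and while plausible in outline, it leaves the hardest step---the sharp two-regime error bound---at the level of heuristics. The paper never works with the random window $[\tau_l,\sigma_l]$; instead it proves an \emph{exact} pointwise identity (Lemma~\ref{lem-ll2}): for every fixed pointed tree $(\tbf,v)\in\mathcal E_{k,l}$,
\[
\frac{P[H_l(\Tbf^n,V_m)=(\tbf,v)]}{P[H_l(\Tl,u_0)=(\tbf,v)]}
=\frac{P[V_{m-a}\text{ is a leaf in }\Tbf^{\,n-|\tbf|+1}]}{\mu(0)}\cdot\frac{P[|\Tbf|=n-|\tbf|+1]}{P[|\Tbf|=n]},
\]
obtained by an ``insert one down-step'' surgery on the L-walk. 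The right side depends on $(\tbf,v)$ only through $|\tbf|$ and the index shift $a$, so everything reduces to two estimates already available: Lemma~\ref{lem-Lw-leaf} gives the $k^{-3/8}$ leaf-probability error, and the local CLT for $P[|\Tbf|=\cdot]$ gives an $O(|\tbf|/n)$ error. One then truncates at $|\tbf|\le x$, controls the tail via Lemma~\ref{lem-ll1} ($P[|H_l(\Tl,u_0)|>x]\preceq(l+1)/\sqrt x$), and balances $(l+1)/\sqrt x$ against $x/n$; the optimum $x=((l+1)n)^{2/3}$ gives Case~I, and $x=k$ gives Case~II.

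Your plan would have to reconstruct this balance through the random $S_l=\sigma_l-\tau_l$, which forces you to (i) justify the two-sided $\mumo$-walk encoding of $\Tl$ (the duality between the size-biased spine and the descending ladder structure is correct but you do not prove it), and (ii) apply the Markov property at the \emph{random} times $\tau_l,\sigma_l$ under the conditioning $\{H_{-1}=n\}$. More seriously, your stated multiplicative error $1+O(S_l/A+\cdots)$ with $A=m\wedge(n-m)\ge k$ is weaker than the paper's $O(|\tbf|/n)$ coming from the tree-size ratio; with $S_l/k$ in place of $S_l/n$ your optimisation does not recover the claimed $((l+1)/\sqrt n)^{2/3}$ term in Case~I. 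The exact formula of Lemma~\ref{lem-ll2} is the missing idea that bypasses all of these difficulties at once.
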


We give two lemmas before proving the theorem.
\begin{lemma}\label{lem-ll1}
$$
P(|H_{l}(\Tl,u_0)|\geq N)\preceq\frac{l+1}{\sqrt{N}}.
$$
\end{lemma}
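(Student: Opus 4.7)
The plan is to decompose $H_l(\Tl,u_0)$ along the spine and reduce the question to the standard first-passage estimate for the L-walk. By construction, the subtree of $\Tl$ rooted at $u_l$ consists of the $l+1$ spine vertices $u_0,\ldots,u_l$, the unconditioned GW-tree $\Tbf_0$ grown from $u_0$, and, at each $u_i$ with $i\in\{1,\ldots,l\}$, the $X_i-1$ side GW-trees grafted off $u_i$, where $X_i\sim\musb$ is the number of children of $u_i$. All the resulting GW-trees are independent, so
\[
|H_l(\Tl,u_0)|=l+\sum_{k=1}^M |\Tbf_k|,\qquad M:=1+\sum_{i=1}^l(X_i-1),
\]
where conditionally on $M$ the $|\Tbf_k|$ are i.i.d.\ unconditioned GW-tree sizes. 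In particular $M\geq 1$ a.s.\ and $E[M]=1+l\,E[X_1-1]=1+l\sigma^2\preceq l+1$, using $E[\musb]=1+\sigma^2$.

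The second step is a uniform tail bound for $\sum_{k=1}^m|\Tbf_k|$. Its law is that of the first-passage time $H_{-m}$ to $-m$ by the random walk $L$ with step distribution $\mu_{-1}$, so \eqref{eq-Kem} together with the local-limit estimate $P(L(n)=-m)\preceq 1/\sqrt n$ from Lemma~\ref{lem-rw} gives
\[
P\Bigl(\sum_{k=1}^m|\Tbf_k|=n\Bigr)=\frac{m}{n}P(L(n)=-m)\preceq\frac{m}{n^{3/2}},\qquad m,n\geq 1,
\]
and hence, after summing over $n\geq N$, the uniform tail estimate $P\bigl(\sum_{k=1}^m|\Tbf_k|\geq N\bigr)\preceq m/\sqrt N$.

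To finish, I may assume $N\geq 4(l+1)^2$, since otherwise $(l+1)/\sqrt N\succeq 1$ and the claim is trivial; in that range $N-l\geq N/2$. Conditioning on $M$ and applying the bound just proved,
\[
P\bigl(|H_l(\Tl,u_0)|\geq N\bigr)\leq P\Bigl(\sum_{k=1}^M|\Tbf_k|\geq N/2\Bigr)\preceq E\Bigl[\frac{M}{\sqrt N}\Bigr]=\frac{E[M]}{\sqrt N}\preceq\frac{l+1}{\sqrt N},
\]
as required. There is no genuinely hard step: the argument rests solely on the spine decomposition, the moment identity for $\musb$, and Kemperman's formula combined with the local limit theorem, all of which are already in place.
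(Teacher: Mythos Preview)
Your proof is correct and follows essentially the same approach as the paper: both decompose $|H_l(\Tl,u_0)|$ along the spine into a random number of i.i.d.\ GW-tree sizes, bound the tail of this sum by Kemperman's formula plus the local limit theorem (i.e.\ \eqref{est-tree-size2}), and then take expectation in the number of trees. The only cosmetic difference is that you use the exact count $M=1+\sum_{i=1}^l(X_i-1)$ while the paper stochastically dominates by $M_l=\hat\xi_1+\cdots+\hat\xi_{l+1}$ with $\hat\xi_i\sim\musb$.
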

\begin{proof}
Note that $|H_{l}(\Tl,u_0)|$ can be written as the sum of $|H_{0}(\Tl,u_0)|$ and the independent differences
$$
|H_{i}(\Tl,u_0)|-|H_{i-1}(\Tl,u_0)|\stackrel{d}{=}S_{\hat{\xi}-1}, i=1,\dots,l;
$$
where $\hat{\xi}$ has the size-biased distribution $\musb$ and
$$
S_m=\xi_1+\dots+\xi_m+1
$$
with $(\xi_j)_{j\in\N}$ being independent copies of $|\Tbf|$. Hence we can bound $|H_{l}(\Tl,u_0)|$ stochastically by $S_{M_l}$ with
$$
M_l=\hat{\xi}_1+\dots+\hat{\xi}_{l+1},
$$
the sum of $l+1$ independent copies of $\hat{\xi}$. By \eqref{est-tree-size2},
$$
P(S_m\geq n)\preceq m/\sqrt{n}.
$$
Hence, we get
$$
P(S_{M_l}\geq N)\leq \sum_{i}P(M_l=i)P(S_i\geq N)\preceq \sum_{i}P(M_l=i)\frac{i}{\sqrt{N}}=\frac{(l+1)E[\hat{\xi}]}{\sqrt{N}}.
$$
\end{proof}

For $\tbf\in\Tfini$ and $v\in\tbf$, we simply write $(\tbf,v)$ for the pointed tree $\tbf$ pointed at $v$. Let $\mathcal{E}_{k,l}$ denote the set of pointed trees $(\tbf, v)$ of a plane tree $\tbf$ having at most $k$ vertices and a vertex $v$ with height $l$, such that $P(H_{l}(\Tl,u_0)=(\tbf,v))>0$. For such $(\tbf,v_0)\in\mathcal{E}_{k,l}$, it is elementary to see that
$$
P[H_{l}(\Tl,u_0)=(\tbf,v_0)]=\prod_{v\in\tbf}\mu(\dbf_o (v)).
$$
\begin{lemma}\label{lem-ll2}
For any $(\tbf,v)\in\mathcal{E}_{k,l}$ $n\geq|\tbf|$ and $m\geq a$ with $a$ being the number of vertices in $\tbf$ that are before $v$, we have
$$
\frac{P[H_{l}(\Tbf^n,V_m)=(\tbf,v)]}{P[H_{l}(\Tl,u_0)=(\tbf,v)]}=\frac{P(V_{m-a} \mathrm{~in~} \Tbf^{n-|\tbf|+1}\mathrm{~is~a~leaf})}{\mu(0)}\cdot\frac{P(|\Tbf|=n-|\tbf|+1)}{P(|\Tbf|=n)}.
$$
\end{lemma}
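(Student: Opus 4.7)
The plan is to set up a bijective decomposition of admissible large trees into a smaller tree together with the pointed subtree $(\tbf,v)$, and then compare the Galton--Watson weights of the two sides.

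First I would establish the following correspondence: every plane tree $\tbf''$ of size $n$ with $H_l(\tbf'',V_m(\tbf''))=(\tbf,v)$ can be uniquely obtained from some tree $\tbf'$ of size $n-|\tbf|+1$ in which the vertex at depth-first-search position $m-a$ is a leaf, by grafting $\tbf$ onto that leaf (so the leaf becomes the $l$-th ancestor of the image of $v$). The index bookkeeping is the key point: if the $l$-th ancestor $v_l$ of $V_m$ in $\tbf''$ sits at DFS position $p$, then the whole subtree $[\tbf'']_{v_l}\cong\tbf$ occupies positions $p,p+1,\dots,p+|\tbf|-1$, and $v\in\tbf$ at local position $a$ corresponds to $V_m$, forcing $p=m-a$; contracting $[\tbf'']_{v_l}$ to a single vertex leaves a leaf precisely at DFS position $m-a$ in the resulting $\tbf'$.

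Next I would compute the GW-weights. For the unconditioned GW-tree $\Tbf$, we have $P[\Tbf=\sigma]=\prod_{w\in\sigma}\mu(\dout(w))$. Comparing $\tbf''$ to $\tbf'$ vertex by vertex, the only vertices whose out-degrees change are the grafting point (previously a leaf contributing $\mu(0)$, now carrying the out-degree of the root of $\tbf$) and the new interior vertices of the grafted $\tbf$; together their contribution is exactly $\prod_{w\in\tbf}\mu(\dout_\tbf(w))/\mu(0)$. Recalling the formula
$$
P[H_l(\Tl,u_0)=(\tbf,v)]=\prod_{w\in\tbf}\mu(\dout_\tbf(w))
$$
displayed just before the lemma, this gives
$$
P[\Tbf=\tbf'']=P[\Tbf=\tbf']\cdot\frac{P[H_l(\Tl,u_0)=(\tbf,v)]}{\mu(0)}.
$$

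Finally I would sum over all admissible $\tbf'$ on both sides of this identity. The left-hand side becomes $P[|\Tbf|=n]\cdot P[H_l(\Tbf^n,V_m)=(\tbf,v)]$, while the sum on the right equals
$$
P\bigl[|\Tbf|=n-|\tbf|+1\bigr]\cdot P\bigl[V_{m-a}(\Tbf^{n-|\tbf|+1})\text{ is a leaf}\bigr].
$$
Dividing through by $P[|\Tbf|=n]\cdot P[H_l(\Tl,u_0)=(\tbf,v)]$ yields the claimed ratio. I don't anticipate a serious obstacle here; the only delicate point is checking that the position shift is exactly $a$ (the DFS-count of vertices of $\tbf$ strictly before $v$) under the graft/contraction bijection, and that the hypothesis $m\geq a$ and $n\geq|\tbf|$ ensures both sides make sense and the corresponding $\tbf'$ actually exists.
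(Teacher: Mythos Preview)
Your proof is correct and rests on the same bijection as the paper's: excising the subtree $[\tbf'']_{v_l}\cong\tbf$ and replacing it by a single leaf at DFS position $m-a$ (and conversely grafting $\tbf$ onto such a leaf). The paper carries this out in the Lukasiewicz-walk encoding---the event $H_l(\Tbf^n,V_m)=(\tbf,v)$ becomes the condition that the shifted $L$-walk on $[m-a,m+|\tbf|-a]$ agrees with $L_\tbf$, the Markov property factors the walk at $m-a$ and $m+|\tbf|-a$, and the ``inserting an extra down-step'' trick is exactly your contraction-to-a-leaf---whereas you stay on the tree side and compare GW product weights directly. The two arguments are really the same computation in different coordinates; yours is a bit more transparent for this isolated lemma, while the paper's $L$-walk phrasing is consistent with the machinery used elsewhere (e.g.\ Lemmas~\ref{ineq-L} and~\ref{lem-Lw-leaf}).
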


\begin{proof}
Note that the out-degree of the $i$-th vertex $V_i$ in $\Tbf$ is uniquely determined by the difference of the L-walk values at $i$ and $i+1$. Hence $H_{l}(\Tbf^n,V_m)=(\tbf,v)$ if and only if the translated L-walk of $\Tbf^n$ restricted on $[m-a,m+|\tbf|-a]$ equals to the L-walk of $\tbf$, i.e.\
\begin{equation}
L_{\Tbf^n}(i+m-a)-L_{\Tbf^n}(m-a)=L_\tbf(i), \qquad i=0,1,\dots,|\tbf|.
\end{equation}

As before, write $L(i)=X_1+\dots+X_i$ for the random walk with jump distribution $\mumo$. Then
$$
P[H_{l}(\Tbf^n,V_m)=(\tbf,v)]=\frac{P[L(1),\dots,L(n-1)\geq0,L(n)=-1; (\star)]}{P[|\Tbf|=n]},
$$
where we write $(\star)$ for
\begin{equation*}
L(i+m-a)-L(m-a)=L_\tbf(i), \qquad i=0,1,\dots,|\tbf|.
\end{equation*}
By conditioning on the value of $L(m-a)$, we can rewrite the numerator as follows.
\begin{align*}
&\sum_{x}P[L(i)\geq0,i\in[[0,m-a]];L(m-a)=x]\times\\
P&[(\star); L(j)\geq 0,j\in[[m-a,n]];L(n)=-1|L(m-a)=x]\\
=&\sum_{x}P[L(i)\geq0,i\in[[0,m-a]];L(m-a)=x]\times\\
P&[(\star);L(j)\geq0,j\in[[m+|\tbf|-a,n]];L(n)=-1|L(m-a)=x]\\
=&\sum_{x}P[L(i)\geq0,i\in[[0,m-a]];L(m-a)=x]\times P[(\star)|L(m-a)=x]\times\\
P&[L(j)\geq0,j\in[[m+|\tbf|-a,n]];L(n)=-1|L(m+|\tbf|-a)=x-1],
\end{align*}
where we use the Markov property twice and note that $(\star)$ together with $L(m-a)=x$ implies $L(|\tbf|+m-a)=x-1$ (and hence $x\geq 1$).
Also by the Markov property, we obtain that
\begin{equation*}\label{eq2.1}
P[(\star)|L(m-a)=x]=\prod_{v\in\tbf}\mu(\dout(v))=P[H_{l}(\Tl,u_0)=(\tbf,v)].
\end{equation*}

We now turn to the other terms. By inserting an extra down-step between them (which corresponds to a factor $\mu(0)$), one can connect the L-walks on $[0,m-a]$ and $[|\tbf|+m-a,n]$ . From this, we get
\begin{align*}
 &\sum_{x}P[L(i)\geq0,i\in[[0,m-a]];L(m-a)=x]\times \\
&\quad P[L(j)\geq0,j\in[[m+|\tbf|-a,n]];L(n)=-1|L(m+|\tbf|-a)=x-1]\\
=&P[L(i)\geq0,j\in[[0,n-|\tbf|+1]];\\
&\quad\quad\quad\quad L(m-a+1)-L(m-a)=-1,L(n-|\tbf|+1)=-1]/\mu(0)\\
=&\frac{P[V_{m-a} \mathrm{~in~} \Tbf^{n-|\tbf|+1}\mathrm{~is~a~leaf}]\, P[|\Tbf|=n-|\tbf|+1]}{\mu(0)}.
\end{align*}
By rearranging all equations obtained, we finish the proof.
\end{proof}

\begin{proof}[Proof of Theorem \ref{thm-ll}]
To simplify notations, we let $\Pl_l(\sbr)=P[H_{l}(\Tl,u_0)=\sbr]$, $\Pl^n_{l,m}(\sbr)=P[H_{l}(\Tbf^n,V_m)=\sbr]$ for any pointed plane tree $\sbr \in A_l=\{\sbr:\Pl_l(\sbr)>0\}$ and $\dTV(\Pl_l,\Pl^n_{l,m})=\dTV(H_{l}(\Tl,u_0),H_{l}\left(\Tbf^n,V_m)\right)$.

From Lemma \ref{lem-Lw-leaf}, Lemma \ref{lem-ll2} and Lemma \ref{lem-rw}, we have (when $|\sbr|/k\rightarrow 0$)
\begin{equation}\label{cc}
  \Pl^n_{l,m}(\sbr)=\Pl_l(\sbr)(1+o(1)).
\end{equation}
Hence,
\begin{align*}
\dTV(\Pl_l,\Pl^n_{l,m})&\leq\sum_{\sbr\in A_l:|\sbr|>\epsilon k}\Pl_l(\sbr)+\sum_{\sbr\in A_l:|\sbr|\leq \epsilon k}|\Pl_l(\sbr)-\Pl_{l,m}^n(\sbr)|\\
\preceq&\frac{l+1}{\sqrt{\epsilon k}}+\sum_{\sbr\in A_l:|\sbr|\leq \epsilon k}\Pl_1(\sbr)|o(1)|.
\end{align*}
We can choose $\epsilon$ small enough to make the second term as small as we want and then let $n\rightarrow \infty$ to make the first term also small. This completes the proof of first assertion. When $\mu$ has finite exponential moments, by Lemma~\ref{lem-Lw-leaf}, Lemma~\ref{lem-ll2} and \eqref{est-tree-size1}, one can get(when $|\sbr|\leq x/2$ and $x\leq k$)
$$
\Pl^n_{l,m}(\sbr)=\Pl_l(\sbr)(1+O(\frac{1}{k^{3/8}})+O(\frac{x}{n})).
$$
\begin{itemize}
    \item Case I: $((l+1)n)^{2/3}\leq k$. Let $x=((l+1)n)^{2/3}$. we have
    \begin{align*}
     \dTV(\Pl_l,\Pl^n_{l,m})&\leq\sum_{\sbr\in A_l:|\sbr|>x/2}\Pl_l(\sbr)+\sum_{\sbr\in A_l:|\sbr|\leq x/2}|\Pl_l(\sbr)-\Pl_{l,m}^n(\sbr)|\\
     \preceq&\frac{l+1}{\sqrt{x}}+O(\frac{1}{k^{3/8}})+O(\frac{x}{n})=O(\frac{(l+1)^{2/3}}{n^{1/3}})+O(\frac{1}{k^{3/8}}).
    \end{align*}
    \item Case II: $((l+1)n)^{2/3}\geq k$. we have
    \begin{align*}
     \dTV(\Pl_l,\Pl^n_{l,m})&\leq\sum_{\sbr\in A_l:|\sbr|>k/2}\Pl_l(\sbr)+\sum_{\sbr\in A_l:|\sbr|\leq k/2}|\Pl_l(\sbr)-\Pl_{l,m}^n(\sbr)|\\
     \preceq&\frac{l+1}{\sqrt{k}}+O(\frac{1}{k^{3/8}})+O(\frac{k}{n}).
    \end{align*}
\end{itemize}
\end{proof}

\subsection{Decomposing large random trees into small ones}

In this subsection we consider GW-trees conditioned on siza. From now on, we always assume that $\mu$ has exponential moments (in addition to the previous assumptions). Given a $\tbf\in\T_\mathrm{f}$ with size $|\tbf|=n$, an interval $[[a,b]]\subset[0,n]$ (with $a,b\in\N$) and a non-root vertex $v\in\tbf$, we say $v$ is an M-vertex in $[[a,b]]$ (of $\tbf$) and $[\tbf]_v$ is an M-subtree in $[[a,b]]$ (of $\tbf$) , if
\begin{itemize}
    \item all vertices of $[\tbf]_v$ are in $\{V_a,\dots,V_b\}$;
    \item $v$ is not in the unique path connecting $V_a$ and $V_b$;
    \item the parent of $v$ is in the unique path connecting $V_a$ and $V_b$.
\end{itemize}

We now state a lemma.
\begin{lemma}\label{lem-numofst}
For any $\alpha\in(0,1)$, there exist $C,c_1,c_2$ (depending on $\alpha$), such that: for any $N=N(n)\in[[n^\alpha,n]]$ and $m=m(n)\in [C\sqrt{\log (n+1)},c_1\sqrt{N}]$, we have
\begin{align*}
P[\exists a\in[[0,&n-N(n)-1]]:\sharp\{k\in[[a,a+N(n)]]: V_k\mathrm{~is~an~M-vertex~}\\
&\;\mathrm{in~}[[a,a+N(n)]] \mathrm{~of~} \Tbf^n\}\geq m(n)\sqrt{N(n)}]\preceq \exp(-c_2m^2(n));\\
P[\exists a\in[[0,&n-N(n)-1]]:\\
&\quad d(V_a(\Tbf^n),V_{a+N(n)}(\Tbf^n))\geq m(n)\sqrt{N(n)}]\preceq \exp(-c_2m^2(n)),
\end{align*}
where $d(\bullet,\bullet)$ is the graph distance.
\end{lemma}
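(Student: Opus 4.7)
The plan is to reduce both quantities to tractable functionals of the Lukasiewicz walk $L=L_{\Tbf^n}$ and the height process $H=H_{\Tbf^n}$ on the sub-interval $[[a,a+N]]$, bound those functionals under the conditional law $P^n(\cdot)=P(\cdot\mid |\Tbf|=n)$, and finally take a union bound over $a$.

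\emph{Step 1 (deterministic reduction).} For any rooted ordered tree $\tbf$ with at least $a+N+1$ vertices, writing $L_\ast=\min_{a\leq j\leq a+N}L(j)$ and $\textsf{p}$ for the unique tree path joining $V_a$ and $V_{a+N}$, I claim
\begin{align*}
d(V_a,V_{a+N}) &\leq 2\bigl(\max_{[a,a+N]}H-\min_{[a,a+N]}H\bigr)+2,\\
\#\{\text{M-vertices in }[[a,a+N]]\} &\leq (L(a)-L_\ast)+\sum_{k\in[[a,a+N-1]]:\,V_k\in\textsf{p}}\dout(V_k).
\end{align*}
The first uses $d(V_a,V_{a+N})=H(a)+H(a+N)-2H(\mathrm{LCA})$ together with the fact that every $V_k$ with $k\in[a,a+N]$ descends from the LCA, whence $H(\mathrm{LCA})\geq\min_{[a,a+N]}H-1$ (the next path vertex below the LCA, if any, lies in $[a+1,a+N]$ at height $H(\mathrm{LCA})+1$). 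The second uses the DFS-stack picture: at time $a$ the stack contains $L(a)+1$ vertices, and exactly $L(a)-L_\ast$ of the old ones are popped in $(a,a+N]$ (each pop corresponds to $L$ reaching a new running minimum); every other M-vertex in the interval is a non-path child of a path vertex processed at some time $k\in[a,a+N-1]$.

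\emph{Step 2 (conditional tail bounds).} Under $P^n$, the Markov property at the times $a$ and $a+N$ decouples $L$ into three pieces; conditionally on the pair $(L(a),L(a+N))$ the middle piece is an unconditioned random-walk bridge to which Lemma~\ref{lem-rw} applies, while Lemma~\ref{ineq-L} furnishes Gaussian control on the endpoint values. Combining these with the exponential-moment assumption gives, for $1\leq r\leq c_0\sqrt N$,
$$
P^n\!\Bigl(\max_{a\leq j\leq a+N}L(j)-\min_{a\leq j\leq a+N}L(j)\geq r\sqrt N\Bigr)\preceq \exp(-cr^2).
$$
An analogous tail for $\max_{[a,a+N]}H-\min_{[a,a+N]}H$ follows from the ladder-height representation $H(k)=\#\{i\leq k:L(i)=\min_{i\leq j\leq k}L(j)\}$ together with concentration for the renewal count (again using exponential moments). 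For the degree sum, on the event $\{\max_{[a,a+N]}H-\min_{[a,a+N]}H\leq r\sqrt N\}$ the path length $\#\textsf{p}\leq 2r\sqrt N+3$, and a Bernstein-type bound on $\sum_{V_k\in\textsf{p}}\dout(V_k)$ (using $E[\dout]=1$ and exponential moments of $\dout$) gives that this sum exceeds $m\sqrt N$ with probability $\preceq\exp(-cm^2)$, provided $m$ is large enough compared to $r$ (e.g.\ $r=m/4$, using $m\leq c_1\sqrt N$ so that $m\sqrt N$ dominates the mean contribution $\#\textsf{p}$).

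\emph{Step 3 (union bound).} Taking $r=m/C'$ for a suitable constant $C'(\alpha)$, each individual failure probability is $\preceq\exp(-c_3 m^2)$. Summing over $a\in[[0,n-N-1]]$ (at most $n$ terms) gives $\preceq n\exp(-c_3 m^2)$; the hypothesis $m\geq C\sqrt{\log(n+1)}$ with $C$ chosen large enough ensures $c_3 m^2\geq 2\log n$, so the factor $n$ is absorbed into $\exp(-c_2 m^2)$, yielding the claimed bound in both assertions.

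\emph{Main obstacle.} The hard part is Step~2. The conditioning $|\Tbf|=n$ is a global constraint, so unconditional maximal inequalities do not apply directly to $L$ on $[[a,a+N]]$; one must factor the conditioned walk into independent pieces via the Markov property at $a$ and $a+N$ and control each piece with Lemma~\ref{ineq-L}. Translating the L-walk oscillation tail into a comparable tail for the height-process oscillation is equally delicate, since $H$ depends on $L$ through the nonlinear ladder-height functional; it is this renewal-type concentration step that uses the exponential-moment assumption in a nontrivial way.
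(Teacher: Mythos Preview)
Your overall architecture --- reduce to an L-walk functional on $[[a,a+N]]$, establish a sub-Gaussian tail $\exp(-cm^2)$, then union-bound over $a$ with the factor $n$ absorbed by $m\geq C\sqrt{\log n}$ --- matches the paper's. But there is a genuine gap in Step~2 for the first assertion: the Bernstein bound on $\sum_{V_k\in\textsf{p}}\dout(V_k)$ is not justified. The out-degrees of vertices on the (random) path $\textsf{p}$ are not i.i.d.\ with law $\mu$; conditioning a vertex to lie on the path to $V_{a+N}$ size-biases its offspring number, and the global constraint $|\Tbf|=n$ further couples everything. You would need to identify the joint law of these degrees under the conditioning before any concentration inequality can be invoked, and that is not straightforward.

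The paper sidesteps this entirely. It splits the M-vertices according to which leg of the path (the $V_a$-to-LCA leg or the LCA-to-$V_{a+N}$ leg) their parent lies on. For the first leg, the number of attached M-subtrees is exactly the drop $L(a+1)-\min_{[a+1,a+N]}L$, because the DFS immediately after $V_a$ processes precisely these subtrees in succession, each contributing $-1$ to $L$. For the second leg, the paper applies the same observation to the \emph{reversed} tree (reverse the children order at every vertex), whose L-walk has the same law; this turns the right leg into a left leg. Thus both halves reduce to a one-sided fluctuation of an unconditioned walk, and the conditioning is handled by the crude bound $P^n(A)\leq P(A)/P(|\Tbf|=n)\preceq n^{3/2}P(A)$ --- far simpler than your Markov-property decoupling at $a$ and $a+N$. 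The second assertion is treated the same way: $d(V_a,V_{a+N})$ is split into $d(V_a,v')+d(v',V_{a+N})$, the latter is identified with the height of the $N$-th vertex in the forest encoded by $L(a+\cdot)-L(a)$ (then bounded via the record-time representation you mention), and the former reduces to the latter by reversal. Your detour through $\max H-\min H$ is not wrong in spirit, but it forces you to control the full height oscillation over the window rather than just the endpoint height, which is extra work you do not need.
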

\begin{proof}
Fix any $a\in[[0,n-N-1]]$. Write $I=[[a,a+N(n)]]$, $v_1=V_a$, $v_2=V_{a+N(n)}$ and $v'$ for the latest common ancestor of $v_1,v_2$. We begin with the first assertion. If the number of M-vertices in $I$ is larger than $m(n)\sqrt{N(n)}$, then either the number of M-vertices attached to the (simple) path connecting $v_1,v'$ is larger than $m(n)\sqrt{N(n)}/2$, or the number of M-vertices attached to the path connecting $v_2,v'$ is larger than $m(n)\sqrt{N(n)}/2$.

For the first case, assume that all M-subtrees attached to the path connecting $v_1,v'$ are $\tbf_1,\dots,\tbf_j$ and  $|\tbf_1|+\dots+|\tbf_j|=J$. Then we have
\begin{equation}\label{cs-eq1}
j\geq m(n)\sqrt{N(n)}/2, J< N(n).
\end{equation}
From the relation between a tree and its L-walk, one can easily see that the translated L-walk of $\Tbf^n$ restricted on $[[a+1,a+J+1]]$, is just the concatenation of L-walks of $\tbf_1,\dots,\tbf_j$. Precisely,
\begin{equation*}
    L_{\Tbf^n}(a+1+k)-L_{\Tbf^n}(a+1)=L_{\tbf_i}(k-|\tbf_1|-\dots-|\tbf_{i-1}|)-(i-1),
\end{equation*}
when $|\tbf_1|+\dots+|\tbf_{i-1}|\leq k\leq |\tbf_1|+\dots+|\tbf_{i}|$. Especially, we have
\begin{equation}\label{L-walk-c2}
\min_{k\in[[a+1,a+N]]}L_{\Tbf^n}(k)-L_{\Tbf^n}(a+1)\leq L_{\Tbf^n}(a+J+1)-L_{\Tbf^n}(a+1)=-j.
\end{equation}

The L-walk of $\Tbf^n$ is a conditioned random walk conditioned on $H_{-1}=n$. we know that
\begin{equation}\label{cs-eq2}
 P(H_{-1}=n)\sim cn^{-3/2}.
\end{equation}
On the other hand, by Lemma~\ref{lem-rw}
$$
P[\min_{k\in[[0,N]]}L(k)\leq -j]\preceq N\exp(-c\frac{j^2}{N}),
$$
where as before, $L$ is the (unconditioned) random walk starting at $0$.  By this and \eqref{cs-eq1}, we get
$$
P(\eqref{L-walk-c2})\preceq N\exp(-cm^2(n)).
$$
Noting that (by choosing $C$ large enough) $\exp(-cm^2(n))$ decays faster than any polynomials of $n$, we get the desired bound for this case.

For the case when the number of M-vertices attached to the path connecting $v_2,v'$ is larger than $m(n)\sqrt{N(n)}/2$, we can reduce it to the first case by considering the L-walk from the right side. For a $\tbf\in \T_\mathrm{f}$, its L-walk from the right side can be obtained as follows. First, for every vertex, reverse the order of its children. Then the L-walk of this tree (with the reversed order) is just the L-walk from the right side of $\tbf$. We leave the details of the reduction to the reader.

We now turn to the second assertion. When $d(v_1,v_2)\geq m(n)\sqrt{N(n)}$, either $d(v_2,v')\geq m(n)\sqrt{N(n)}/2$ or $d(v_1,v')\geq m(n)\sqrt{N(n)}/2$. As in the proof of the first assertion, (the latter case can be reduced to the former one) we can assume that
\begin{equation}\label{L-walk-c3}
    d(v_2,v')\geq m(n)\sqrt{N(n)}/2.
\end{equation}
Consider the translated L-walk of $\Tbf^n$ in $I$, $L':[[0,N(n)]]\rightarrow \Z$ :
$$
L'(i)=L_{\Tbf^n}(a+i)-L_{\Tbf^n}(a).
$$
Note that $L'$ can be regarded as the concatenation of L-walks of GW-trees (though the traverse of the last GW-tree is generally not `finished') and \eqref{L-walk-c3} corresponds to the event that the height of the $N(n)$-th vertex in these GW-trees is at least $ m(n)\sqrt{N(n)}/2$. Since the L-walk of $\Tbf^n$ is a conditioned random walk conditioned on the event $H_{-1}=n$, which has probability of order $n^{-3/2}$, it suffices to show that for an unconditioned random walk $L$ starting at $0$ with distribution $\mu_{-1}$, (which can be regarded as the concatenation of L-walks of independent GW-trees), the probability of the event that the height of the $N(n)$-th vertex in the corresponding GW-tree is bigger than $m(n)\sqrt{N(n)}/2$ has the desired bound.

At the current stage we need a relation between the height of a vertex of a forest and its L-walk. The L-walk of a forest is just the concatenation of all L-walks of the trees in the forest. A key fact is that the height of the $N$-th vertex has the same distribution as the number of `record times' of $L$ in $[[1,N]]$. More precisely, set $R_0=0$ and $R_j=\inf\{k>R_{j-1}:L(k)= \max_{i\in[[0,k]]}L(i)\}$. Then, the height of the $N$-th vertex in the forest of independent GW-tress has the same distribution as $\sharp\{j\in\N^+:R_j\leq N\}$. Moreover, the random variables $L(R_j)-L(R_{j-1}), j=1,2,\dots$ are i.i.d.\ with the distribution given by: $P(L(R_1)=i)=\mu_{-1}([i,\infty])$. Especially, $E[ L(R_1)]=\sigma^2/2$. For more details about this, see Section 1.3 in \cite{L05R}. With the help of these facts, we now get (write $k=\lceil m(n)\sqrt{N(n)}/2\rceil$)
\begin{align*}
P&[\sharp\{j\in\N^+:R_j\leq N\}\geq k]=P[R_k\leq N]\\
&\leq P[\sum_{j=1}^{k}(L(R_j)-L(R_{j-1}))\leq \frac{k\sigma^2}{4}]+ P[\sum_{j=1}^{k}(L(R_j)-L(R_{j-1}))\geq \frac{k\sigma^2}{4}, R_k\leq N]\\
&\leq P[\sum_{j=1}^{k}(L(R_j)-L(R_{j-1}))\leq \frac{k\sigma^2}{4}]+ P[\max_{i\leq N}\{L(i)\}\geq \frac{k\sigma^2}{4}].
\end{align*}
Now the last assertion in Lemma~\ref{lem-rw} gives the the desired bounds for both terms (note that $\mu_{-1}$ and $L(R_1)$ have finite exponential moments and that $E[L(R_j)-L(R_{j-1})]=\sigma^2/2$).
\end{proof}

\begin{lemma}\label{lm-sum}
For any $k,j\in\N^+$, let $\xi_1,\dots,\xi_j$ be i.i.d. with the distribution of $|\Tbf|$, and $\tilde{\xi}_i=\xi_i1_{\xi_i\leq k}$. There exists $C>0$, such that, when $t\geq Cj\sqrt{k}$,
$$
P[\tilde{\xi}_1+\dots+\tilde{\xi}_j>t]\leq \exp(-\frac{t}{k}).
$$
\end{lemma}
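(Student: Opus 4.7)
\medskip

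\noindent\textbf{Proof plan.} The natural tool is the Cram\'er--Chernoff exponential moment method, applied at a carefully chosen $\lambda$ of order $1/k$. The point of the truncation at $k$ is twofold: it makes the truncated variable $\tilde\xi_i$ have only a \emph{mild} mean of order $\sqrt{k}$ (even though $E|\Tbf|=\infty$ since $\mu$ is critical), and it makes $\lambda\tilde\xi_i$ bounded when $\lambda\asymp 1/k$, so that $e^{\lambda \tilde\xi_i}$ admits a clean linearized upper bound. The rate $e^{-t/k}$ in the conclusion is then read off directly from the choice $\lambda=2/k$.

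\medskip

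\noindent The first step is to estimate the truncated mean. Using \eqref{est-tree-size1} (which gives $P[|\Tbf|=n]\preceq n^{-3/2}$ for all $n\ge 1$), one has
\begin{equation*}
E[\tilde\xi_i]=\sum_{n=1}^{k}n\,P[|\Tbf|=n]\preceq \sum_{n=1}^{k}\frac{1}{\sqrt{n}}\preceq \sqrt{k}.
\end{equation*}
Next, for the MGF, pick $\lambda=2/k$ so that $\lambda n\le 2$ for all $1\le n\le k$; using the elementary inequality $e^{x}\le 1+e^{2}x$ valid on $[0,2]$,
\begin{equation*}
E\bigl[e^{\lambda\tilde\xi_i}\bigr]=P[|\Tbf|>k]+\sum_{n=1}^{k}e^{\lambda n}P[|\Tbf|=n]\le 1+e^{2}\lambda\, E[\tilde\xi_i]\le 1+\frac{C_{1}}{\sqrt{k}}
\end{equation*}
for a constant $C_{1}$ depending only on $\mu$. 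By independence, $E[\exp(\lambda\sum_{i\le j}\tilde\xi_i)]\le\exp(C_{1}j/\sqrt{k})$.

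\medskip

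\noindent Markov's inequality then yields
\begin{equation*}
P\Bigl[\tilde\xi_{1}+\cdots+\tilde\xi_{j}>t\Bigr]\le \exp\!\Bigl(-\tfrac{2t}{k}+\tfrac{C_{1}j}{\sqrt{k}}\Bigr).
\end{equation*}
Choosing $C=C_{1}$ in the statement: if $t\ge C j\sqrt{k}$ then $t/k\ge C_{1}j/\sqrt{k}$, so the exponent is bounded above by $-t/k$, proving the claim. The main (very minor) subtlety is simply the bookkeeping that forces us to work with $\lambda=2/k$ rather than $\lambda=1/k$: the factor $2$ gives slack between the $-\lambda t$ term and the MGF correction, ensuring the final rate is exactly $-t/k$ rather than something strictly smaller. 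Other than that, the argument is a direct Chernoff estimate once the $\sqrt{k}$ truncated mean is identified.
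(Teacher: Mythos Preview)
Your proof is correct and is essentially the same approach as the paper's: both are exponential-moment (Chernoff) bounds exploiting the truncated mean $E\tilde\xi_i\asymp\sqrt{k}$ and the hard bound $\tilde\xi_i\le k$. The paper phrases it as ``a simple application of the Bernstein inequality'' after recording $E\tilde\xi_i\asymp\sqrt{k}$ and $E\tilde\xi_i^2\asymp k^{3/2}$, whereas you compute the MGF directly at $\lambda=2/k$ using only the first moment and the elementary bound $e^x\le 1+e^2 x$ on $[0,2]$; your route is if anything slightly more self-contained, but the substance is identical.
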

\begin{proof}
We know that $P(\xi_i=n)\asymp n^{-3/2}$. From this we get $E\tilde{\xi}_i\asymp \sqrt{k}$ and $E\tilde{\xi}_i^2\asymp k^{3/2}$.
A simple application of the Bernstein inequality gives the desired bound.
\end{proof}

\begin{prop}\label{cut-1}
For any $\alpha\in(0,1)$, there exist $C_1,C_2,c_1,c_2$ (depending on $\alpha$), such that, for any $n$ sufficiently large, $N(n)\in[[n^\alpha,n]]$, $b(n)\in[[C_1,\sqrt{n}]]$ and $a(n)\in [[C_2 (b(n))^2\log n,c_1(b(n))^2N(n)]]$, we have the following. For $I$, any interval in $[[0,n]]$ with length $N(n)$, let $\tbf_1,\dots,\tbf_j$ be all M-subtrees in $I$ of $\Tbf^n$. Then
$$
P[\sum_{|\tbf_i|\leq N(n)/a(n)}|\tbf_i|>\frac{N(n)}{b(n)}]\preceq \exp\left(-c_2\frac{a(n)}{(b(n))^{2}}\right).
$$
In other words, if we discard all small M-subtrees (with size at most $N(n)/a(n)$) in $I$, with high probability, we would not lose too many (bigger than $N(n)/b(n)$) vertices.
\end{prop}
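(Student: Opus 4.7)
The plan is to combine Lemma \ref{lem-numofst}, which bounds the number $j$ of M-subtrees in $I$, with Lemma \ref{lm-sum}, which concentrates a sum of truncated i.i.d.\ copies of $|\Tbf|$; the conditioning on $|\Tbf^n|=n$ is absorbed at the end via $P[|\Tbf|=n]\asymp n^{-3/2}$ from \eqref{est-tree-size1}. Throughout, write $a=a(n)$, $b=b(n)$, $N=N(n)$, fix the interval $I=[[a_0,a_0+N]]$, and set $m=c_0\sqrt{a}/b$ for a small constant $c_0\in(0,1)$ to be fixed at the end. The hypotheses $a\in[C_2 b^2\log n,\,c_1 b^2 N]$ are precisely what is needed (for $C_1,C_2$ large enough) to place $m$ in the admissible range $[C\sqrt{\log(n+1)},\,c_1\sqrt{N}]$ of Lemma \ref{lem-numofst}. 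Since $j$ is at most the number of M-vertices in $I$, applying that lemma gives
\[
P[\,j > m\sqrt{N}\,] \preceq \exp(-c\,m^2) = \exp(-c\,c_0^2\,a/b^2).
\]

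Next, I would reduce the sum of small M-subtree sizes to a truncated i.i.d.\ sum. Partition the M-subtrees in $I$ into the \emph{forward} ones, attached to the spine path from $V_{a_0}$ down to the latest common ancestor $v'$ of $V_{a_0}$ and $V_{a_0+N}$, and the \emph{backward} ones, attached to the path from $v'$ up to $V_{a_0+N}$. Exactly as in the proof of Lemma \ref{lem-numofst}, under the unconditional law of $\Tbf$ the L-walk has jump distribution $\mumo$, and the sizes of the forward M-subtrees are the excursion lengths of the shifted walk $L(a_0+\cdot)-L(a_0)$ between successive running minima from the left, hence form an initial segment of an i.i.d.\ sequence distributed as $|\Tbf|$; the backward sizes admit the same description applied to the L-walk read from the right (equivalently, after reversing sibling order), the device already used in the second half of that proof.

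Consequently, on $\{j\le m\sqrt{N}\}$ the target sum is stochastically dominated (under the unconditional law) by twice the truncated sum $\sum_{i=1}^{\lceil m\sqrt{N}\rceil}\tilde\xi_i$, with $\tilde\xi_i=|\Tbf_i|\,1_{|\Tbf_i|\le N/a}$ and $(\Tbf_i)$ i.i.d.\ unconditioned GW-trees. Lemma \ref{lm-sum} with $k=N/a$ and $t=N/(2b)$ then applies as soon as $c_0\le 1/(2C)$ (which I impose), producing the bound $\exp(-t/k)=\exp(-a/(2b))\le\exp(-a/(2b^2))$. Combining with the display above, the unconditional probability of the target event is $\preceq\exp(-c\,a/b^2)$; since $\Tbf^n$ is $\Tbf$ conditioned on $|\Tbf|=n$, the corresponding conditional probability is at most $O(n^{3/2})\exp(-c\,a/b^2)$, and the lower bound $a\ge C_2 b^2\log n$ with $C_2$ large enough absorbs the $n^{3/2}$ into a slightly smaller exponent, producing the advertised $\exp(-c_2\,a/b^2)$.

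The main obstacle is the clean identification, in the second paragraph, of the forward and backward M-subtree sizes with excursion lengths of an unconditional L-walk, hence with i.i.d.\ copies of $|\Tbf|$. The forward identification is already implicit in the proof of Lemma \ref{lem-numofst}; the backward side needs the same construction after reversing sibling order. Once this structural point is in place, the remaining work is bookkeeping: choosing $c_0,C_1,C_2$ so that the geometric constraints of Lemmas \ref{lem-numofst} and \ref{lm-sum} and the $n^{3/2}$ cost of unconditioning all fit under the claimed exponent $c_2\,a/b^2$.
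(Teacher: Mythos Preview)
Your argument is correct and reaches the same conclusion as the paper, but the organization is genuinely different. The paper does \emph{not} uncondition from $\{|\Tbf|=n\}$ at the end. Instead, it stays inside the law of $\Tbf^n$, conditions further on the pair $(j,l)$ where $l=d(v_2,v')$ (using \emph{both} assertions of Lemma~\ref{lem-numofst}, not just the bound on $j$), and then uses the exact structural fact that, given $(j,l)$, the full family of M-subtree sizes $(|\tbf_1|,\dots,|\tbf_j|)$ is distributed as i.i.d.\ copies of $|\Tbf|$ \emph{conditioned on their sum being} $N-l$. Lemma~\ref{lm-sum} is then divided by the explicit lower bound $P[\sum_1^j\xi_i=N-l]\succeq jN^{-3/2}\exp(-cj^2/N)$ coming from \eqref{eq-Kem} and Lemma~\ref{lem-rw}. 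Your route trades this cleaner conditional identification for a forward/backward split plus the global $n^{3/2}$ cost of unconditioning; in return you avoid needing the bound on $l$ and the lower bound on the sum probability.

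One small point to tighten: ``the L-walk read from the right'' is \emph{not} literally the same as ``the L-walk after reversing sibling order''; the time-reversed walk has increments with law $\mu_{-1}(-\cdot)$, whereas sibling reversal keeps the increment law $\mu_{-1}$. Only the sibling-reversal version (the one actually used in the proof of Lemma~\ref{lem-numofst}) gives you i.i.d.\ excursion lengths distributed as $|\Tbf|$ for the backward M-subtrees, so you should drop the parenthetical ``equivalently'' and commit to that interpretation. Once you do, your pointwise domination $\sum_{i\le j_f}\tilde\xi_i\le\sum_{i\le M}\tilde\xi_i$ on $\{j\le M\}$ is unproblematic (no stopping-time issue arises, since the inequality is deterministic), the sibling-reversal symmetry handles the backward half under $\{H_{-1}=n\}$, and your bookkeeping for $c_0,C_1,C_2$ is correct.
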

\begin{proof}
As in the proof of Lemma~\ref{lem-numofst}, write $v_1$, $v_2$ for the first and the last vertices of $\Tbf$ in $I$ and $v'$ for the latest common ancestor of $v_1,v_2$. Assume $\tbf_1,\dots,\tbf_j$ are all $M$-subtrees in $I$. Let $l$ be the graph distance between $v_2$ and $v'$. Note that the set of all vertices in $\{V_i:i\in I\}$ except $v_1$ is the disjoint union of the set of vertices in $\tbf_1,\dots,\tbf_j$ and the set of vertices in the unique path connecting $v_2$ and $v'$ (including $v_2$ but not $v'$). Due to Lemma~\ref{lem-numofst}, by discarding an event with small probability, we can assume that
\begin{equation}\label{cs-eq3}
 j\leq c_0\frac{\sqrt{N}a^{1/2}}{b},\quad l\leq c_0\frac{\sqrt{N}a^{1/2}}{b},
\end{equation}
where we will pick up $c_0$ very small and then let $C_i$ be large enough and $c_i$ small enough (such that we can apply Lemma~\ref{lem-numofst}, Lemma~\ref{lm-sum} and Lemma~\ref{lem-rw}).

Note that we have
$$
\sum_{i=1}^j|\tbf_i|=N(n)-l.
$$
Moreover, it is elementary to see that conditionally on the value of $j$ and $l$, the sizes $(|\tbf_1|,\dots,|\tbf_j|)$ have the same distribution of $(\xi_1,\dots,\xi_j)$ in the last lemma conditioned on their sum being $N-l$. Hence we only need to estimate (under the same notations in the last lemma)
$$
P[\tilde{\xi}_1+\dots+\tilde{\xi}_j>t|\xi_1+\dots+\xi_j=N-l].
$$
with $k=N/a$ and $t=N/b$. By choosing $c_0$ small enough, $t,k,j$ can be satisfied with the requirement in Lemma~\ref{lm-sum}. Hence, we have
$$
P[\tilde{\xi}_1+\dots+\tilde{\xi}_j>\frac{N}{b}]\leq \exp(-\frac{a}{b}).
$$
On the other hand, we have (from \eqref{eq-Kem} and the third assertion of Lemma~\ref{lem-rw})
$$
P[\xi_1+\dots+\xi_j=N-l]\succeq \frac{j}{N^{3/2}}\exp(-c\frac{j^2}{N})\geq \frac{1}{N^{3/2}}\exp(-\frac{a}{2b}),
$$
by choosing $C_1$ and $C_2$ large enough.
Combining the two inequalities above, we get
$$
P[\tilde{\xi}_1+\dots+\tilde{\xi}_j>\frac{N}{b}|\xi_1+\dots+\xi_j=N-l]\preceq N^{3/2}\exp(-\frac{a}{2b}).
$$
By choosing $C_1$ large enough, we have
$$
N^{3/2}\exp(-\frac{a}{2b}) \leq \exp(-\frac{a}{b^2}).
$$
The proof is complete.
\end{proof}
\begin{remark}\label{rm-indep}
In this proposition, conditionally on the value of $(j;|\tbf_1|,\dots,|\tbf_j|)$, $\tbf_1,\dots,\tbf_j$ are actually independent and distributed as the GW-trees with their sizes. One can easily see this from the following. When conditioning on the values of the L-walk in $I$ outside the intervals corresponding to $\tbf_1,\dots,\tbf_j$, the L-walk restricted on the interval corresponding to $\tbf_i$ is just a conditioned random walk conditioned (only) on fixed starting point and end point and achieving its minimum only at the end point.
\end{remark}

\begin{prop}\label{cut-2}
For any positive $A>2,\epsilon,\alpha$ with $\alpha>2\epsilon$, there exists $C=C(\epsilon,\alpha, A)$, satisfying the following. When $n\in\N$ is sufficiently large, for any $k=k(n), L=L(n)\in[[1,n]]$ satisfying $k^A\geq n\geq L\log L, L\geq k^{2+\alpha}$, with probability bigger than $1-Ck^{1+\epsilon}/\sqrt{L}$, we can find a vertex $v$ in the $k$-th generation of $\Tbf^n$, such that
\begin{itemize}
    \item the subtree $[\Tbf^n]_v$ has at least $n-L$ vertices;
    \item conditionally on the size (even on the subtree generated by all edges not in $[\Tbf^n]_v$), $[\Tbf^n]_v$ is distributed as a GW-tree with the given size.
\end{itemize}
\end{prop}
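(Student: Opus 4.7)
The plan is to exhibit a specific candidate vertex $v$ at depth $k$ and to bound the probability that its fringe $\mathbf{F} := \Tbf^n \setminus [\Tbf^n]_v$ exceeds size $L$, by coupling the first $k$ generations of $\Tbf^n$ around the root with Kesten's tree $\Tbf^\infty$. The second bullet follows by the branching property: for any vertex $v\in\Tbf^n$, given the complement $\Tbf^n\setminus[\Tbf^n]_v$ (which determines $s:=n-|\mathbf F|$), the subtree $[\Tbf^n]_v$ is distributed as an unconditioned GW-tree conditioned on $|\Tbf|=s$.

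I would take $v := u_k$, where the \emph{heavy spine} $u_0=o,u_1,\ldots,u_k$ is defined recursively: $u_{i+1}$ is the child of $u_i$ in $\Tbf^n$ whose subtree is largest (ties broken arbitrarily). Then $|\mathbf F| = k + \sum_{i=0}^{k-1}\sum_{c\text{ non-heavy child of }u_i}|[\Tbf^n]_c|$. In $\Tbf^\infty$, where the spine is inherent, the analogous fringe is $F_k^\infty = k + \sum_{i=0}^{k-1}\sum_{j=1}^{\hat\xi_i-1}|\Tbf_{i,j}|$, with $\hat\xi_i\sim\musb$ i.i.d.\ and the $\Tbf_{i,j}$ independent copies of $\Tbf$. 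Writing $N_k=\sum_{i=0}^{k-1}(\hat\xi_i-1)$ (mean $\sigma^2 k$), Bernstein (valid since $\mu$ has exponential moments) localizes $N_k\in[\tfrac12\sigma^2 k,2\sigma^2 k]$ off a negligible event; since $P(|\Tbf|\ge m)\asymp m^{-1/2}$ by \eqref{est-tree-size1}, the split
\[
\{F_k^\infty>L\}\subseteq\big\{\max_{i,j}|\Tbf_{i,j}|>L\big\}\cup\Big\{\sum_{i,j}|\Tbf_{i,j}|\mathbf{1}_{|\Tbf_{i,j}|\le L}>L\Big\},
\]
combined with a union bound on the first event ($\preceq k/\sqrt L$) and a Chebyshev estimate on the second — using $E[|\Tbf|\mathbf{1}_{\le L}]\asymp\sqrt L$, $\mathrm{Var}(|\Tbf|\mathbf{1}_{\le L})\asymp L^{3/2}$, $N_k\asymp k$ summands, mean $\asymp k\sqrt L\ll L$ by $L\ge k^{2+\alpha}$ — yields $P(F_k^\infty>L)\preceq k/\sqrt L$.

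The heart of the argument is the quantitative transfer of this estimate from $\Tbf^\infty$ to $\Tbf^n$ at the root up to depth $k$. Using the same L-walk technology as in Section~3.2 — the Kemperman-type estimates of Lemma~\ref{ineq-L} and the leaf-ratio identity of Lemma~\ref{lem-Lw-leaf} — applied at the $0$-endpoint of the L-walk instead of in the bulk as in Theorem~\ref{thm-ll}, one shows that the joint law of the first $k$ heavy-spine generations of $\Tbf^n$ together with all attached sibling-subtree sizes, \emph{restricted} to the event that no such size exceeds $L$, is close in total variation to the corresponding law in $\Tbf^\infty$, with error of order $k^{1+\eps}/\sqrt L$ under the standing assumptions $n\ge L\log L$, $L\ge k^{2+\alpha}$, and $\alpha>2\eps$. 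Combined with the Kesten estimate, this produces $P(|\mathbf F|>L)\preceq k^{1+\eps}/\sqrt L$, which is the asserted bound.

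The main obstacle is this root-based quantitative local coupling. Theorem~\ref{thm-ll} treats a prefixed vertex $V_m$ bounded away from the root; the root case is more delicate because the ballot-type condition $L_{\Tbf^n}\ge 0, L_{\Tbf^n}(n)=-1$ is anchored at the origin, producing a size-biasing of the L-walk endpoint that must be carefully tracked as the error compounds across $k$ descent steps of the heavy spine, which is the source of the extra $k^\eps$ factor in the final bound. A secondary subtlety is that ``heavy child'' is a globally defined functional, but once one restricts to the event that no sibling subtree exceeds size $L\ll n$, the heavy child is unambiguously the child absorbing the bulk of the remaining mass, matching the spine child in $\Tbf^\infty$ under the coupling.
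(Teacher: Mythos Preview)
Your approach via the heavy spine and a quantitative coupling with Kesten's tree $\Tbf^\infty$ is genuinely different from the paper's, and the gap you yourself flag is real and is where the proof actually lives. The paper does not couple with $\Tbf^\infty$ at all. Instead it argues directly on $\Tbf^n$ in two steps. First, using Janson's moment bounds $E[w_k(\Tbf^n)^r]\leq C(r)k^{r-1}$, it shows (Lemma~\ref{cs-lem1}) that with probability $\geq 1-C/k^\gamma$ the number $m=w_k(\Tbf^n)$ of vertices at generation $k$ is at most $k^{1+\beta}$ and the total size $M$ of the first $k$ generations is at most $k^{2+\beta}$, with $\beta=\tfrac12(\eps\wedge\alpha)$. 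Second, conditionally on $\lceil\Tbf^n\rceil_k$, the sizes of the $m$ subtrees hanging from generation $k$ are distributed as i.i.d.\ copies $(\xi_1,\dots,\xi_m)$ of $|\Tbf|$ conditioned on $\sum\xi_i=n-M+m$; a direct ballot-type estimate (Lemma~\ref{cs-lem2}) gives
\[
P\Big[\max_i\xi_i\leq N-l\;\Big|\;\sum_i\xi_i=N\Big]\preceq \frac{m}{\sqrt{l}}+\frac{m}{\sqrt{N/\log N}},
\]
which with $l=L/2$, $N=n-M+m$ and $m\leq k^{1+\beta}$ yields the claimed $k^{1+\eps}/\sqrt{L}$. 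The vertex $v$ is simply the generation-$k$ vertex whose subtree achieves the maximum; the second bullet then follows exactly as in Remark~\ref{rm-indep}.

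This sidesteps entirely what you call ``the main obstacle'': there is no need to compare the joint law of the heavy-spine neighbourhood in $\Tbf^n$ with the spine in $\Tbf^\infty$, nor to handle the fact that the heavy-spine choice is a global functional of the tree. Your proposal asserts the required TV closeness with error $k^{1+\eps}/\sqrt L$ but gives no argument for it; Theorem~\ref{thm-ll} does not cover the root, and the L-walk machinery of Section~3.2 would have to be reworked from scratch at the anchored endpoint, iterated over $k$ descent steps, and made compatible with the data-dependent heavy-spine selection. That programme may be feasible, but it is at least as much work as the whole proposition, whereas the paper's route reduces everything to one moment bound on generation widths and one conditional-maximum estimate for i.i.d.\ $n^{-3/2}$-tailed summands.
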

\begin{proof}
For any tree $\tbf$, write $w_i(\tbf)$ for the number of vertices in the $i$-th generation and $\lceil \tbf\rceil_i$ for the subtree generated by all vertices of height $\leq i$. In order to prove this proposition, we need two lemmas.
\begin{lemma}\label{cs-lem1}
For any positive $\beta, \gamma$, there exists $C=C(\beta,\gamma)>0$, such that
\begin{align*}
    P[w_k(\Tbf^n)\geq k^{1+\beta}]\leq \frac{C}{k^\gamma},\\
    P[\sum_{i=1}^{k}w_i(\Tbf^n)\geq k^{2+\beta}]\leq \frac{C}{k^\gamma}.
\end{align*}
\end{lemma}
\begin{proof}
Lemma 2.2 in \cite{J06R} states that
$E(w_k(\Tbf^n))^r\leq C(r) k^{r-1}$. From this, one can get
\begin{equation}\label{cs-eq4}
P[w_k(\Tbf^n)\geq x]\leq \frac{C(r)k^{r-1}}{x^r}.
\end{equation}
Hence, we have
$$
P[w_k(\Tbf^n)\geq k^{1+\beta}]\leq\frac{C(r)k^{r-1}}{(k^{1+\beta})^r}=\frac{C(r)}{k^{1+\beta r}}.
$$
By choosing $r$ large enough, we get the first assertion.

For the second one,
\begin{align*}
P&[\sum_{i=0}^{k}w_i(\Tbf^n)\geq k^{2+\beta}]\leq
 \sum_{i=1}^{k}P[w_i(\Tbf^n)\geq k^{1+\beta}/2]\\
&\leq \sum_{i=1}^{k}C(r)\frac{i^{r-1}}{(k^{(1+\beta)}/2)^r}\leq kC(r)\frac{2^rk^{r-1}}{k^{(1+\beta)r}}=\frac{2^rC(r)}{k^{\beta r}}.
\end{align*}
By choosing $r$ large enough, we get the second assertion.
\end{proof}
As before, let $\xi_1,\xi_2,\dots,\xi_m$ be i.i.d.\ copies of $|\Tbf|$.
\begin{lemma}\label{cs-lem2}
For any $N\geq l\geq m\geq2$ with $N\geq 2l\geq m^2$,
\begin{equation}
    P[\max_{1\leq i\leq m}\{\xi_i\}\leq N-l|\sum_{i=1}^{m}\xi_i=N]\preceq \frac{m}{\sqrt{l}}+\frac{m}{\sqrt{N/\log N}}.
\end{equation}
\end{lemma}
\begin{proof}
We divide the term on the left hand side into three pieces and estimate them separately.
\begin{align*}
\sum_{j=l}^{\lfloor0.9N\rfloor}P[\max_{i}\{\xi_i\}=N-j&|\sum \xi_i=N]+\sum_{j=\lfloor N/(5\log N)\rfloor}^{\lfloor 0.1N\rfloor}P[\max_{i}\{\xi_i\}=j|\sum \xi_i=N]\\
&+P[\max_{i}\{\xi_i\}\leq N/(5\log N)|\sum \xi_i=N].
\end{align*}
The first one is bounded by
\begin{align*}
    \sum_{j=l}^{\lfloor 0.9N\rfloor}&\frac{P[\max\{\xi_i\}=N-j,\sum \xi_i=N]}{P[\sum\xi_i=N]}\preceq \sum_{j=l}^{\lfloor 0.9N\rfloor}\frac{mP[\xi_1=N-j,\sum_{i=2}^m\xi_i=j]}{\frac{m}{N}P[L(N)=-m]}\\
    &\preceq\sum_{j=l}^{\lfloor 0.9N\rfloor}\frac{mN^{-3/2}(m-1)j^{-3/2}}{mN^{-3/2}}\preceq\frac{m}{\sqrt{l}}.
\end{align*}
The second one can be analyzed similarly.
\begin{align*}
  \sum_{j=\lfloor N/(5\log N)\rfloor}^{\lfloor 0.1N\rfloor}&\frac{P[\max\{\xi_i\}=j,\sum \xi_i=N-j]}{P[\sum\xi_i=N]}\\
  &\preceq\sum_{j=\lfloor N/(5\log N)\rfloor}^{\lfloor 0.1N\rfloor}\frac{mP[\xi_1=j,\sum_{2}^m\xi_i=N-j]}{P[\sum\xi_i=N]}\\
  &\preceq\sum_{j=\lfloor N/(5\log N)\rfloor}^{\lfloor 0.1N\rfloor}\frac{mj^{-3/2}(m-1)N^{-3/2}}{mN^{-3/2}}\asymp\frac{m}{\sqrt{N/\log N}}.
\end{align*}
The last term is bounded by
$$
\frac{P[\max_{i}\{\xi_i\}\leq N/(5\log N),\sum \xi_i=N]}{P[\sum\xi_i=N]}\preceq \frac{P[\sum\xi_i1_{\xi_i\leq N/(5\log N)}\geq N ]}{mN^{-3/2}}.
$$
Applying Lemma~\ref{lm-sum} with $t=N,j=m,k=\lfloor N/(5\log N)\rfloor$, we obtain that the numerator is less than $N^{-5/2}$ and finish the proof.
\end{proof}
We are ready to prove our proposition. Let $\beta=\frac{1}{2}(\epsilon\wedge\alpha)$. By Lemma \ref{cs-lem1}, we can assume that (by discarding an event with small probability)
\begin{equation}\label{cs-eq6}
  M\doteq|\lceil \Tbf^n\rceil_k|\leq k^{2+\beta},\quad m\doteq w_k(\lceil \Tbf^n\rceil_k) \leq k^{1+\beta}.
\end{equation}
Write $v_1,\dots,v_m$ for all vertices in the $k$-generation and $\tbf_i$ for $[\Tbf^n]_{v_i}$ ($i=1,\dots,m$). As before, it is elementary to see that conditionally on $M,m$ (even on $\lceil \Tbf^n\rceil_k$), $|\tbf_1|,\dots,|\tbf_m|$ are distributed as $\xi_1,\dots,\xi_m$ conditioned on $\sum_{i=1}^{m}\xi_i=n-M+m$. Let $N=n-M+m$ and $l=L/2$. Applying Lemma \ref{cs-lem2}, we have (note that $M,m\ll L \ll n$, hence $n-M+m-L/2\geq n-L$)
$$
P[\max\{|\tbf_i|\}\geq n-L]\geq 1-C\frac{k^{1+\beta}}{\sqrt{L}}.
$$

Let $[\Tbf^n]_v$ be the one that achieves the maximum. The first requirement holds and the second is true due to the same reason as in Remark~\ref{rm-indep}. We finish the proof.
\end{proof}

We now state our result for decomposing a large random tree into small ones.
\begin{theorem}\label{cutting}
For any $d_0,e>0$, $\alpha,\beta,\delta\in(0,d_0)$ with $\epsilon_0\doteq\frac{3}{2}\beta-\alpha-d_0-\frac{7}{2}\delta>0$, $\epsilon\in(0,\epsilon_0)$, and $n(N)$, a function of $N$ with $n(N)\in[[N^{d_0}/(\log N)^e,N^{d_0}(\log N)^e]]$, there exists $C$ (depending on all parameters except $N$), such that, for any sufficiently large $N\in\N$, with
probability at least $1-CN^{-\epsilon}$, in $\Tbf^n$, we can find rooted subtrees $T_1,\dots T_{m}$ (note that $m$ is also random), where $T_i$ is rooted at $v_i$, the unique vertex in $T_i$ closest to the root of $\Tbf^n$, $o$, satisfying the following:
\begin{enumerate}
   \item For every $i\in\{1,...,m\}$, $N^{\beta-2\delta}/(\log N)^{1.01}\leq |T_i| \leq N^{\beta}$ and the distance between $v_i$ and $o$ is at least $\lfloor N^{\alpha}\rfloor$;
   \item Let $\hat{T}$ be the graph generated by all edges not in any $T_i$. Then $\hat{T}$ is a tree and $|\hat{T}|\leq n/N^{\delta}$;
   \item Let $\iota_i$ ($i=1,\dots,m$) be the unique path starting from $v_i$ towards the root of $T$, with length $\lfloor N^{\alpha}\rfloor$. Then for any $i\in\{1,\dots,m\}$, all $T_j$ except $T_i$, are in the same component of $T\setminus \iota_i$;
   \item Conditionally on $\{m;|T_1|,\dots,|T_{m}|;\hat T\}$ (and even the locations of $v_i$ in $\hat T$), the trees $T_i$ are independent and distributed as the GW-trees with their sizes.
\end{enumerate}
\end{theorem}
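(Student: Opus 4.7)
The plan is to apply Propositions~\ref{cut-1} and~\ref{cut-2} in three nested stages, producing a forest of GW-subtrees rooted deep in $\Tbf^n$ with the separation condition~(3) built in automatically. First I would apply Proposition~\ref{cut-2} to $\Tbf^n$ with $k=\lfloor N^\alpha\rfloor$ and $L_1$ a small multiple of $n/N^\delta$, to locate a vertex $v^{\ast}$ at generation $k$ whose subtree $S := [\Tbf^n]_{v^{\ast}}$ satisfies $|S|\ge n-L_1$ and is, conditionally on $(\Tbf^n\setminus S,|S|)$, a GW-tree of size $|S|$. Since every vertex of $S$ sits at depth $\ge\lfloor N^\alpha\rfloor$ from the root of $\Tbf^n$, the distance-to-root requirement in condition~(1) will be free for any subtree chosen inside $S$; the hypotheses of Proposition~\ref{cut-2} here reduce to the inequality $d_0>2\alpha+7\delta$, which one extracts from $\epsilon_0>0$ together with $\beta<d_0$.

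Second, I would partition the depth-first order on $S$ into $M\le n/N^\beta$ consecutive intervals of length $N^\beta$ and invoke Proposition~\ref{cut-1} inside each with $N(|S|)=N^\beta$, $a=N^{2\delta}(\log N)^{1.01}$, $b=N^\delta$. A union bound handles the event that any interval loses more than $N^{\beta-\delta}$ mass to small M-subtrees, and Lemma~\ref{lem-numofst} bounds the vertices on the common path of each interval by $O(\sqrt{N^\beta\log n})$; both contributions sum across intervals to $O(n/N^\delta)$ vertices, using only $\beta>2\delta$. In each interval there then remains a family of large M-subtrees of sizes in $[2N^{\beta-2\delta}/(\log N)^{1.01},\,N^\beta]$, the factor $2$ serving as a safety buffer for the third step.

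The crucial third move is to apply Proposition~\ref{cut-2} a second time, now to each large M-subtree $T$, with $k_2=\lfloor N^\alpha\rfloor$ and $L_2$ of order $c\,N^{\beta-3\delta}/(\log N)^{1.01}$, producing an interior vertex $v_i$ at depth exactly $N^\alpha$ inside $T$ such that $T_i := [T]_{v_i}$ retains at least $|T|-L_2$ vertices. The feasibility condition $L_2\ge k_2^{2+\eta}$ for some $\eta>0$ is $\beta>2\alpha+3\delta$, supplied by the stronger inequality $\beta>2\alpha+7\delta$ derived from $\epsilon_0>0$; summing the per-subtree failure probability of Proposition~\ref{cut-2} against the total number $\le(n/N^\beta)\cdot N^{2\delta}(\log N)^{1.01}$ of large M-subtrees produces a bound of order $N^{-\epsilon_0+\alpha\eta}$, which one makes smaller than $N^{-\epsilon}$ by choosing $\eta$ small enough using $\epsilon<\epsilon_0$. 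The reward of this last trimming is that $\iota_i$, the length-$\lfloor N^\alpha\rfloor$ path from $v_i$ towards the root, stays entirely inside the original M-subtree $T$; since distinct original M-subtrees are vertex-disjoint, $\iota_i$ is disjoint from every other $T_j$, so removing $\iota_i$ cannot separate any $T_j$ with $j\ne i$ from the component containing the root, verifying condition~(3).

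Conditions~(1) and~(2) follow from the construction and the size accounting above (the assertion in condition~(2) that $\hat T$ is a tree is automatic since the $T_i$'s are vertex-disjoint subtrees not containing $o$), and condition~(4) is obtained by composing Remark~\ref{rm-indep} (applied both inside each interval and across intervals, as DFS blocks are disjoint) with the conditional-GW statement of Proposition~\ref{cut-2} invoked at each of the two trimming steps. The main obstacle I foresee is the careful bookkeeping of parameters and failure probabilities across the three nested applications, in particular verifying that the single compact hypothesis $\epsilon_0>0$ together with $\alpha,\beta,\delta<d_0$ is indeed enough to simultaneously imply all of the separate inequalities ($d_0>2\alpha+7\delta$, $\beta>2\alpha+7\delta$, $\beta>2\delta$, $\epsilon_0>\alpha\eta+\epsilon$, etc.) that appear along the way.
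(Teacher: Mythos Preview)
Your plan is correct and coincides with the paper's argument in its core steps~2 and~3, where Proposition~\ref{cut-1} is applied on intervals of length $\sim N^\beta$ followed by Proposition~\ref{cut-2} inside each surviving large M-subtree with $k=\lfloor N^\alpha\rfloor$; the parameter bookkeeping you sketch (in particular the identity $d_0-\tfrac{3}{2}\beta+\tfrac{7}{2}\delta+\alpha=-\epsilon_0$ governing the total failure probability in the union bound) is exactly what the paper uses. The one difference is your step~1, an initial application of Proposition~\ref{cut-2} to the whole tree $\Tbf^n$ meant to secure the depth-to-root part of condition~(1). This step is harmless but redundant: the paper observes that the single trimming in your step~3 already delivers both~(1) and~(3) simultaneously, since $v_i$ sits at generation exactly $\lfloor N^\alpha\rfloor$ inside the M-subtree $T'_i$, so $\iota_i$ is precisely the path from $v_i$ to the root of $T'_i$ and hence lies inside $T'_i$ (giving~(3)), while the root of $T'_i$ is itself a non-root vertex of $\Tbf^n$, so the distance from $v_i$ to $o$ is automatically at least $\lfloor N^\alpha\rfloor$ (giving the depth bound in~(1)). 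Dropping your step~1 therefore simplifies the argument to the paper's two-stage procedure without altering anything else.
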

\begin{proof}
This theorem is a combination of Proposition~\ref{cut-1} and Proposition~\ref{cut-2}. We only need to designate the parameters.

Note that since $\epsilon_0>0$, we have $0<d_0-\beta+2\delta<\beta/2-\alpha-3\delta/2$ and $(\beta/2-\alpha-3\delta/2)-(d_0-\beta+2\delta)=\epsilon_0$. Let $\gamma=d_0-\beta+2\delta+(\epsilon+\epsilon_0)/2$. Note that $\beta/2-\alpha-3\delta/2-\gamma=(\epsilon_0-\epsilon)/2$. Set $a=N^{2\delta}(\log N)^{1.01}/3,b=2N^\delta$, $k=\lfloor N^\alpha\rfloor$ and $L=N^{2\alpha+2\gamma}/2$. First, divide $[[0,n]]$ (deterministically) into disjoint intervals with lengths in $[[N^\beta/2,N^\beta]]$. For each interval, apply Proposition~\ref{cut-1}. Then, (with high enough probability, since the error term decays faster than any polynomial of $N$) without losing too many vertices ($<n/2N^\delta$), we obtain a number of M-subtrees, say $T'_1,\dots,T'_{m}$, with sizes in $[[3N^{\beta-2\delta}/2(\log N)^{1.01},N^\beta]]$. Then, for each $T'_i$, by applying Proposition~\ref{cut-2}, we can obtain, with high probability($\geq 1-Ck^{1+0.1(\epsilon_0-\epsilon)/d_0}/\sqrt{L}\geq 1-CN^{0.1(\epsilon_0-\epsilon)}/N^\gamma$), a subtree $T_i$, which is at distance $k$ from the root of $T'_i$, without losing more than $L$ vertices. We argue that $T_1,\dots,T_m$ satisfy all conditions. Conditions (3), (4), and part of (1) and (2) are satisfied automatically from the propositions. We only need to check the quantitative parts of Conditions (1) and (2).

First, since $2\alpha+2\gamma=(\beta-2\delta)-\delta-(\epsilon_0-\epsilon)$, one can see that the size of $T_i$ is at least
$$
\frac{3N^{\beta-2\delta}}{2(\log N)^{1.01}}-L\geq \frac{3N^{\beta-2\delta}}{2(\log N)^{1.01}}-\frac{N^{\beta-2\delta}}{2(\log N)^{1.01}}=\frac{N^{\beta-2\delta}}{(\log N)^{1.01}}.
$$
Hence, Condition (1) is satisfied.

Second, consider the total number of vertices that we lose. In the first step (when applying Proposition~\ref{cut-1}), we lose at most $n/2N^{\delta}$ vertices. On the other hand, note that (when $N$ is large enough)
$$
m<\frac{n}{3N^{\beta-2\delta}/2(\log N)^{2.001}}\leq\frac{n}{N^{\beta-2\delta-0.1(\epsilon_0-\epsilon)}}.
$$
Therefore, in the second step (when applying Proposition~\ref{cut-2}), the number of vertices we lose is at most
\begin{multline*}
mL\leq \frac{n}{2N^{\beta-2\delta-2\alpha-2\gamma-0.1(\epsilon_0-\epsilon)}}\\
=\frac{n}{2N^{\delta+2(\beta/2-\alpha-3\delta/2-\gamma)-0.1(\epsilon_0-\epsilon)}}=\frac{n}{2N^{\delta+0.9(\epsilon_0-\epsilon)}}<\frac{n}{2N^\delta}.
\end{multline*}
Combining both, one can get Condition (2).

Finally, note that
$$
m\frac{N^{0.1(\epsilon_0-\epsilon)}}{N^\gamma}\leq \frac{N^{d_0}}{N^{\gamma+(\beta-2\delta)-0.3(\epsilon_0-\epsilon)}}=\frac{1}{N^{0.2\epsilon_0+0.8\epsilon}}\leq \frac{1}{N^\epsilon}.
$$
It means that with probability at least $1-CN^{-\epsilon}$, we can get all $T_i$ from $T_i'$ as desired. The proof is complete.
\end{proof}

\section{The local limit of tree-indexed random walks in tori}
In this and the next sections, we consider tree-indexed random walks in the discrete torus $\Torus_N\doteq(\Z/N\Z)^d$ of side-length $N$ ($d\geq 5$). For technical reasons, we assume that the jump distribution $\theta$ is aperiodic, symmetric, with finite range and not supported on any strict subgroup of $\Z^d$, and that the offspring distribution $\mu$ is critical, nondegenerate, with finite exponential moments. For simplicity, we assume further that $\mu$ is with span $1$. The case when the span is bigger than $1$ can be treated with minor standard modifications. We aim to show the local convergence result in this section. Write $\varphi:\Z^d\rightarrow \Torus_N$ for the canonical projection map induced by $\mathrm{mod~}N$. We now state our main theorem.
\begin{theorem}\label{ll-thm-main}
For any $K\ssubset \Z^d$, $u\in(0,\infty)$ and $n=n(N)$, an integer-valued function satisfying $\lim_{N\rightarrow \infty}n(N)/N^d=u$,
\begin{equation}\label{eq-main}
\lim_{N\rightarrow \infty}P[\mathrm{Range}(\Snake^{n,N})\cap \varphi(K)=\emptyset]=\exp(-u\BCap(K)),
\end{equation}
where $\Snake^{n,N}$ is the tree-indexed random walk with size $n$ and a uniform starting point in $\Torus_N$.
\end{theorem}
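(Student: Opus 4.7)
The plan is to follow the ``law of rare events'' strategy sketched after Theorem~\ref{thm-lc}: chop the large conditioned Galton--Watson tree $\Tbf^n$ into many small independent subtrees whose spatial images on $\Torus_N$ are nearly independent, apply the small-tree visiting-probability asymptotic (Theorem~\ref{visit-small}) to each piece, and assemble everything into an exponential via a product expansion.

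The first step is to apply Theorem~\ref{cutting} with $d_0=d$ and parameters $\alpha,\beta,\delta$ chosen so that $\alpha>2$, $\beta<d$ and $\tfrac{3}{2}\beta-\alpha-d-\tfrac{7}{2}\delta>0$; this window is non-empty for every $d\geq 5$ (for $d=5$, take e.g.\ $\alpha=2+\eta$, $\beta=5-\eta$ with small $\eta$, and $\delta\ll\eta$). This yields, with probability $1-O(N^{-\epsilon})$, a decomposition $\Tbf^n=\hat T\cup T_1\cup\cdots\cup T_m$ with $|T_i|\in[N^{\beta-2\delta}/(\log N)^{1.01},N^\beta]$, with $|\hat T|\leq n/N^\delta$, with each $T_i$ attached to $\hat T$ through a path $\iota_i$ of length at least $\lfloor N^\alpha\rfloor$ separating $T_i$ from all other $T_j$, and with the $T_i$ conditionally independent Galton--Watson trees of the prescribed sizes given $\hat T$ and $(|T_i|)_i$. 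The residual tree is harmless: translation invariance forces every individual vertex of $\Snake^{n,N}$ to be marginally uniform on $\Torus_N$, hence by Markov's inequality
$$P\bigl[\Snake^{n,N}(\hat T)\cap\varphi(K)\neq\emptyset\bigr]\leq |K|\,|\hat T|/N^d\leq u|K|/N^\delta\longrightarrow 0,$$
so I may restrict attention to the subtrees $T_1,\ldots,T_m$.

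For each $T_i$, Theorem~\ref{visit-small} supplies, uniformly in the starting location $x\in\Torus_N$, the asymptotic
$$P\bigl[\text{walk on }T_i\text{ started at }x\text{ hits }\varphi(K)\bigr]=(1+o(1))\,|T_i|\,\BCap(K)/N^d,$$
which is $o(1)$ since $|T_i|/N^d\leq N^{\beta-d}\to 0$. Conditioning on $\hat T$ and on the root locations $x_i=\Snake^{n,N}(v_i)$, the spatial processes on $T_1,\ldots,T_m$ are conditionally independent by the edge-wise Markov property of the tree-indexed walk together with the independence clause in Theorem~\ref{cutting}. Writing $y_i=(1+o(1))|T_i|\BCap(K)/N^d$, this gives
$$P[\text{no }T_i\text{ hits }\varphi(K)\mid\hat T,(|T_i|)]=\prod_{i=1}^m(1-y_i)=\exp\Bigl(-\sum_{i=1}^m y_i+O\Bigl(\sum_{i=1}^m y_i^2\Bigr)\Bigr).$$
Here $\sum y_i^2\leq (\max_i y_i)\sum y_i\preceq N^{\beta-d}\to 0$, and $\sum_i|T_i|=n-|\hat T|=(u+o(1))N^d$, so the conditional avoidance probability tends to $\exp(-u\BCap(K))$. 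Taking expectation yields \eqref{eq-main}.

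The main obstacle is the underlying input, namely Theorem~\ref{visit-small}, together with the need for its error term to be \emph{uniform} in the starting location $x\in\Torus_N$ and across $|T_i|\in[N^{\beta-2\delta}/(\log N)^{1.01},N^\beta]$. Once this uniform asymptotic is available, the correlation between the root locations $x_i$ through the walk on $\hat T$ is automatically immaterial, which is exactly why separation in the tree (embodied by the paths $\iota_i$ of length $\gg N^2$, the mixing scale of the walk on $\Torus_N$) plays its role only implicitly through the uniform-in-$x$ bound. The remaining task, that of controlling the accumulated error across $m=O(N^{d-\beta+2\delta}(\log N)^{1.01})$ subtrees, is routine given the sharp quantitative bounds in Theorems~\ref{cutting} and~\ref{visit-small}.
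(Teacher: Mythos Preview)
Your overall strategy matches the paper's, but there is a genuine gap in the factorization step. You condition on the root locations $x_i=\Snake^{n,N}(v_i)$ to obtain exact conditional independence of the subtree walks, and then invoke Theorem~\ref{visit-small} ``uniformly in the starting location $x\in\Torus_N$''. But Theorem~\ref{visit-small} is stated and proved only for a \emph{uniform} starting point, and the uniform-in-$x$ version you need is false as written: if $x\in\varphi(K)$ the hitting probability is $1$, not $(1+o(1))|T_i|\BCap(K)/N^d$. So the product $\prod_i(1-y_i)$ you write down is not the conditional avoidance probability given the $x_i$'s.

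The paper handles this differently. It conditions only on the tree data $\Upsilon=(m;|T_1|,\ldots,|T_m|;\hat T)$, so that each $\Snake(v_i)$ remains marginally uniform and Theorem~\ref{visit-small} applies to each factor $P[\Snake(T_i)\cap\varphi(K)=\emptyset\mid\Upsilon]$. The cost is that these events are \emph{not} conditionally independent given $\Upsilon$, because the $\Snake(v_i)$ are correlated through the walk on $\hat T$. This is exactly where the separating paths $\iota_i$ enter \emph{explicitly}: Lemma~\ref{C3o} uses that $|\iota_k|=\lfloor N^\alpha\rfloor$ with $\alpha>2$ exceeds the mixing time of the walk on $\Torus_N$ (Lemma~\ref{mixing}), so the endpoints of $\iota_k$ are nearly independent and the joint avoidance probability factorizes up to an error $C\exp(-cN^{\alpha-2})$ per subtree. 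Your sentence that the separation ``plays its role only implicitly through the uniform-in-$x$ bound'' inverts the logic: the separation is precisely the mechanism that manufactures the approximate independence, substituting for the unavailable uniform-in-$x$ estimate. Your route can be repaired by (i) a union bound over the $m$ roots showing that with high probability every $x_i$ lies at distance $>N^\gamma$ from $\varphi(K)$, and (ii) replacing Theorem~\ref{visit-small} by the fixed-starting-point asymptotic of Proposition~\ref{visit-small-fixed}, valid for such $x$; but note that the proof of Proposition~\ref{visit-small-fixed} itself works by locating a subtree at tree-distance $\gg N^2$ from the root and invoking mixing, so you would be reproducing the paper's mechanism one level down rather than bypassing it.
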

Through the inclusion-exclusion principle, this theorem implies the local convergence of the configuration, Theorem~\ref{thm-ll}
\subsection{The visiting probability of a set by a small tree-indexed random walk}
Theorem~\ref{ll-thm-main} gives an asymptotic formula for the probability that a tree-indexed random walk visits a subset in $\Torus_N$, with a size proportional to the volume of the torus, $N^d$. The main result of this subsection is to give an asymptotic formula for the probability that a set is visited by a much smaller tree-indexed random walk.
\begin{theorem}\label{visit-small}
For any $\alpha_1<\alpha_2\in(0,d)$ fixed, $n=n(N)$ any integer-valued function of $N$ satisfying $n(N)\in[N^{\alpha_1},N^{\alpha_2}]$, and $K\ssubset \Z^d$, we have
\begin{equation}\label{eq-visit-small}
    \lim_{N\rightarrow\infty}\frac{N^d}{n}P[\mathrm{Range}(\Snake^{n,N})\cap \varphi(K)\neq\emptyset]=\BCap(K).
\end{equation}
\end{theorem}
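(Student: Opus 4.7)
The strategy is to mimic the classical last-visit decomposition used to show that a simple random walk of length $n$ in a torus visits $K$ with probability $\sim n\,\Capa(K)/N^d$, replacing capacity by branching capacity and using the local limit of random trees (Theorem~\ref{thm-ll}) together with the cutting decomposition (Theorem~\ref{cutting}) to transfer $\Z^d$ estimates to the torus.

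Enumerate the vertices of $\Tbf^n$ in depth-first order as $V_0=o, V_1,\ldots, V_{n-1}$, and let $V_L$ denote the largest index $i$ with $\Snake^{n,N}(V_i)\in\varphi(K)$ (with $V_L=-\infty$ if no such index exists). Since $\Snake^{n,N}(o)$ is uniform on $\Torus_N$ and $\theta$ is symmetric and translation invariant, each $\Snake^{n,N}(V_i)$ is uniform on $\Torus_N$. Conditioning on $\Snake^{n,N}(V_i)=y$ and translating by $-y$ therefore gives
$$
P[\mathrm{Range}(\Snake^{n,N})\cap\varphi(K)\neq\emptyset]=\sum_{i=0}^{n-1}P[V_L=i]=\frac{1}{N^d}\sum_{i=0}^{n-1}\sum_{y\in\varphi(K)}p_{n,i}(y),
$$
where $p_{n,i}(y)$ denotes the probability that, in the tree-indexed walk along the DFS-future subtree at $V_i$ started from $0$ at $V_i$, no vertex strictly later than $V_i$ is sent into $\varphi(K)-y$.

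For $i$ in the bulk $[[k(n),n-k(n)]]$ with $k(n)\to\infty$ slowly (e.g.\ $k(n)=\lceil\log n\rceil$), Theorem~\ref{thm-ll} provides total-variation agreement of the local structure of $\Tbf^n$ near $V_i$ with that of $\Tl$ near $u_0$ on any fixed graph-radius $l$. Under this identification the DFS-future of $V_i$ on the scale $l$ corresponds to a prescribed one-sided portion of $\Tl$ at $u_0$, and the ``avoid $\varphi(K)-y$'' condition becomes, after identifying $\varphi(K)$ with $K$ via $\varphi|_K$ (injective for $N>\mathrm{diam}(K)$), the first-visit-at-time-$0$ event under $P^{\,y}_{\mathrm v}$ that defines $\Es_K(y)$. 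Letting $l\to\infty$ after $N\to\infty$, the branching-transience of the walk in $\Z^d$ for $d\geq 5$ (cf.\ Section~2.4 and Remark~\ref{rm1}) shows that vertices of the future subtree at tree-distance exceeding $l$ reach spatial distance $\gg\mathrm{diam}(K)$ and miss $\varphi(K)$ with probability $1-o_l(1)$, so truncating at radius $l$ introduces only a vanishing error. Meanwhile, since $n\leq N^{\alpha_2}$ with $\alpha_2<d$, Theorem~\ref{cutting} applied with a cutting scale $\beta<4$ confines each piece of the tree to spatial diameter $O(N^{\beta/4})=o(N)$, so the torus cannot be distinguished from $\Z^d$ on the relevant scale. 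Combining these gives $p_{n,i}(y)=\Es_K(y)+o(1)$ uniformly in $i\in[[k(n),n-k(n)]]$ and $y\in\varphi(K)$, whence $P[V_L=i]=(\BCap(K)+o(1))/N^d$ by $\BCap(K)=\sum_{y\in K}\Es_K(y)$.

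Summing over bulk $i$ gives $\tfrac{n\,\BCap(K)}{N^d}(1+o(1))$, while the boundary range $i\notin[[k(n),n-k(n)]]$ contributes at most $2k(n)|K|/N^d=o(n/N^d)$ via the trivial bound $P[V_L=i]\leq P[\Snake^{n,N}(V_i)\in\varphi(K)]=|K|/N^d$. Dividing by $n/N^d$ yields \eqref{eq-visit-small}. The principal technical obstacle is the identification $p_{n,i}(y)\to\Es_K(y)$: one must reconcile the DFS-future orientation at $V_i$ with the depth-first-search-from-infinity order on $\Tinfv$ underlying $\tv$ and $\Es_K$, and control the truncation error via a quantitative transience estimate for branching random walks in $\Z^d$ (where the hypothesis $d\geq 5$ enters crucially). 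A secondary but independent issue is ruling out torus wrap-around effects, which is handled by Theorem~\ref{cutting} at the chosen scale.
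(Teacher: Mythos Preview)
Your high-level strategy---decompose by the time of last (or first) visit, then use the local limit Theorem~\ref{thm-ll} to identify the conditional avoidance probability with an escape probability---is exactly the paper's. (The paper uses the \emph{first} visit, which yields $\Es_K$; your last-visit variant should yield $\Esc_K$, not $\Es_K$, but both sum to $\BCap(K)$, cf.\ the remark following \eqref{nuBCap}.) However, two of your three technical reductions have genuine gaps, and the tools you invoke are not the right ones.

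The main gap is the truncation step, which you summarize as ``vertices at tree-distance exceeding $l$ \dots\ miss $\varphi(K)$ with probability $1-o_l(1)$''. This is where almost all the work lies (Lemma~\ref{ll-l2} in the paper), and branching transience alone is not enough: the vertices $V_{k+1},\dots,V_{n-1}$ after $V_k$ in DFS order are neither all at large tree-distance from $V_k$ nor few in number, so no union bound is available. The paper first compares $\Tbf^n$ with $\Tbf^\infty$ and then with $\Tbf^{\mathrm v}$ (Lemma~\ref{ll-lm-n-c} and \eqref{ll-eq-inf-v}), and then---crucially---invokes the shift-invariance of $(P^x_{\mathrm v})_{x\in\Z^d}$ under $\shiftsv$ (Proposition~\ref{invariance}) to convert the Ces\`aro average $\frac{1}{n}\sum_k P^{\mathrm v}_{k,x}[\,\cdot\,]$ into a single tail probability $P^{\mathrm v\infty}_{0,x}[y\in\{Y_i:i\leq -l/2\}]$, which only then goes to $0$ by transience. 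Without this invariance trick the argument does not close.

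Your appeal to Theorem~\ref{cutting} for wrap-around is also misplaced. That theorem is not used in the proof of Theorem~\ref{visit-small} at all; it appears only later, to derive Theorem~\ref{ll-thm-main} from Theorem~\ref{visit-small}. Here one must bound the probability that the walk anchored near $K$ visits a translated copy $K+Nz$, $z\neq0$, and cutting the tree into pieces does not help since the last-visit index is a global quantity; moreover the constraint $\tfrac{3}{2}\beta>\alpha+d_0+\tfrac{7}{2}\delta$ with $d_0$ close to $\alpha_2$ precludes $\beta<4$ once $\alpha_2\geq6$. The paper handles wrap-around directly (Lemma~\ref{ll-l1}) via the range bound of Lemma~\ref{large-range} combined with the Green-type estimate $\frac{1}{n}\sum_{i<j}P^n_{j,x}[X_i=y]\preceq|x-y|^{4-d}$ of Lemma~\ref{snake-x-y}, summed over the translated copies.
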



We start with our notations. For any $m\in \N^+$ and $\tbf\in \T_\mathrm{f}\cup\T_\infty$ with $|\tbf|>m$, denote by $\rho_m(\tbf)$ and $\rho^+_m(\tbf)$, respectively, the subtree generated by the first $m$ vertices of $\tbf$ (i.e.\ $V_0,\dots,V_{m-1}$) and the one generated by the first $m$ vertices together with all children of the first $m$ vertices. Note that $\rho^+_m(\tbf)$ is uniquely determined by $\rho_m(\tbf)$ and the degrees of the vertices that are ancestors of $V_{m-1}$. In addition, since the parent of a vertex comes before that vertex, we can see that $\rho^+_m(\tbf)$ is uniquely determined by $(L_\tbf(i))_{i\in[[0,m]]}$. Note that in $\rho^+_m(\tbf)$, there are exactly $L_\tbf(m)+1$ vertices, which are not in $\rho_m(\tbf)$. We denote by $\partial \rho_m(\tbf)$ the set of these vertices.

As before, write $\Tbf^n$ and $\Tbf^\infty$ for the GW-tree with $n$ vertices and the one conditioned on survival respectively. We now compare $P[\rho^+_m(\Tbf^n)=\rho^+_m(\tbf)]$ to $P[\rho^+_m(\Tbf^\infty)=\rho^+_m(\tbf)]$. By considering the L-walks, it is elementary to see that
$$
P[\rho^+_m(\Tbf^n)=\rho^+_m(\tbf)]=\frac{P[L(i)=L_\tbf(i),i=0,1,\dots,m; H_{-1}=n]}{P[|\Tbf|=n]},
$$
where as before, $L=(L(i))_{i\in\N}$ is a random walk starting at $0$ with jump distribution $\mu_{-1}$, $H_{-1}$ is the fist visiting time of $-1$ and $\Tbf$ is an unconditioned GW-tree. Using the Markov property at time $m$ and \eqref{eq-Kem}, one can get
\begin{multline}\label{eq-rho1}
 P[\rho^+_m(\Tbf^n)=\rho^+_m(\tbf)]=\\
 \frac{P[L(i)=L_\tbf(i),i=0,1,\dots,m]\frac{L_\tbf(m)+1}{n-m}P[L(n-m)=-(L_\tbf(m)+1)]}{P[|\Tbf|=n]}.
\end{multline}

We turn to $\Tbf^\infty$. Note that when $\rho^+_m(\Tbf^\infty)=\rho^+_m(\tbf)$, there is a unique vertex in $\partial\rho_m(\tbf)$, which is also in the spine of $\Tbf^\infty$. On the other hand, for any $u\in\partial\rho_m(\tbf)$, we have
\begin{align*}
 P[\rho^+_m(\Tbf^\infty)=\rho^+_m(\tbf),\;u &\mathrm{~is~a~vertex} \mathrm{~in~the~spine~of~}\Tbf^\infty]\\
& =P[L(i)=L_\tbf(i),i=0,1,\dots,m].
\end{align*}
Hence, we get
\begin{equation}\label{eq-rho2}
P[\rho^+_m(\Tbf^\infty)=\rho^+_m(\tbf)]=P[L(i)=L_\tbf(i),i=0,1,\dots,m](L_\tbf(m)+1).
\end{equation}
Combining \eqref{eq-rho1} and \eqref{eq-rho2} leads to
$$
\frac{P[\rho^+_m(\Tbf^n)=\rho^+_m(\tbf)]}{P[\rho^+_m(\Tbf^\infty)=\rho^+_m(\tbf)]}=\frac{P[L(n-m)=-(L_\tbf(m)+1)]}{(n-m)P[|\Tbf|=n]}.
$$
When $n\rightarrow \infty$ and $m/n\rightarrow a\in(0,1)$, by the first assertion of Lemma \ref{lem-rw}, we get
$$
\frac{P[\rho^+_m(\Tbf^n)=\rho^+_m(\tbf)]}{P[\rho^+_m(\Tbf^\infty)=\rho^+_m(\tbf)]}=\Gamma_a(\frac{L_\tbf(m)+1}{\sqrt{n}})+o(1),
$$
where $\Gamma_a(x)=\exp(-x^2/2\sigma^2(1-a))/(1-a)^{3/2}$. Note that $\Gamma_a$ is bounded above, when $a\in(0,1)$ is fixed. Hence, we have, when $m=\lfloor an\rfloor$,
\begin{equation}\label{ll-eq-n-inf1}
  P[\rho^+_m(\Tbf^n)=\rho^+_m(\tbf)]\leq C(a)P[\rho^+_m(\Tbf^\infty)=\rho^+_m(\tbf)].
\end{equation}

We now give a similar relation between $\Tbf^\infty$ and $\Tbf^\mathrm{c}$, the random tree with law $\tc$. Note that $\Tbf^\infty$ and $\Tbf^c$ differ only in the finite bushes attached to the roots. Write $v$ for the vertex in the spine next to the root. Then, for any $i\in\N$,
\begin{align*}
   P&[v \mathrm{~has~exact~} i \mathrm{~elder~siblings~in~ }\Tbf^\infty]=\sum_{j=i+1}^\infty\mu(j)\\
   &\leq \sum_{j=i}^\infty\mu(j)=2 P[v \mathrm{~has~exact~} i \mathrm{~elder~siblings~in~}\Tbf^\mathrm{c}].
\end{align*}
Hence, we have
$$
P[\rho_m(\Tbf^\infty)=\rho_m(\tbf)]\leq 2P[\rho_m(\Tbf^\mathrm{c})=\rho_m(\tbf)].
$$
Combining this with \eqref{ll-eq-n-inf1}, we get
\begin{lemma}\label{ll-lm-n-c}
\begin{equation}\label{ll-eq-n-inf}
    P[\rho^+_m(\Tbf^n)=\rho^+_m(\tbf)]\leq C(a)P[\rho^+_m(\Tbf^\infty)=\rho^+_m(\tbf)],
\end{equation}
\begin{equation}\label{ll-eq-n-c}
    P[\rho_m(\Tbf^\infty)=\rho_m(\tbf)]\preceq P[\rho_m(\Tbf^\mathrm{c})=\rho_m(\tbf)].
\end{equation}
\end{lemma}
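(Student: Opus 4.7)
The plan is to derive both inequalities by reducing questions about trees to questions about the associated Lukasiewisz walk $L$ with jump law $\mu_{-1}$, since the subtree $\rho^+_m(\tbf)$ is completely determined by the path $(L_\tbf(i))_{i\in [[0,m]]}$ (and $\rho_m(\tbf)$ is determined by $(L_\tbf(i))_{i\in [[0,m-1]]}$ together with the knowledge of how many extra children of $V_{m-1}$ appear). The inequality \eqref{ll-eq-n-inf} will be obtained by an explicit computation of both sides followed by a ratio estimate via the local CLT; the inequality \eqref{ll-eq-n-c} will come from a direct comparison of the root's offspring laws under $\Tbf^\infty$ and $\Tbf^\mathrm{c}$.

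For \eqref{ll-eq-n-inf}, I would first write
\[
P[\rho^+_m(\Tbf^n)=\rho^+_m(\tbf)]=\frac{P[L(i)=L_\tbf(i),\,i\leq m]\cdot P_{L_\tbf(m)}[H_{-1}=n-m]}{P[|\Tbf|=n]}
\]
using the Markov property at time $m$, and then apply the Kemperman-type identity (Lemma~\ref{Kemperman}) together with \eqref{eq-Kem} to turn $P_{L_\tbf(m)}[H_{-1}=n-m]$ into $\tfrac{L_\tbf(m)+1}{n-m}P[L(n-m)=-(L_\tbf(m)+1)]$. On the other hand, for $\Tbf^\infty$ the event $\{\rho^+_m(\Tbf^\infty)=\rho^+_m(\tbf)\}$ just fixes which of the $L_\tbf(m)+1$ boundary vertices lies on the spine, and since the spine vertex at each level is chosen uniformly among the children of the previous spine vertex, a factorization argument gives
\[
P[\rho^+_m(\Tbf^\infty)=\rho^+_m(\tbf)]=P[L(i)=L_\tbf(i),\,i\leq m]\cdot(L_\tbf(m)+1).
\]
The ratio thus collapses to $P[L(n-m)=-(L_\tbf(m)+1)]/((n-m)P[|\Tbf|=n])$. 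Feeding in the local CLT (Lemma~\ref{lem-rw}) and the asymptotic \eqref{est-tree-size1} for $P[|\Tbf|=n]$, this ratio equals $\Gamma_a\bigl((L_\tbf(m)+1)/\sqrt{n}\bigr)+o(1)$ when $m=\lfloor an\rfloor$, where $\Gamma_a(x)=\exp(-x^2/2\sigma^2(1-a))/(1-a)^{3/2}$; since $a\in(0,1)$ is fixed, $\Gamma_a$ is uniformly bounded by a constant $C(a)$, yielding \eqref{ll-eq-n-inf}.

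For \eqref{ll-eq-n-c}, the key observation is that under both $\Tbf^\infty$ and $\Tbf^\mathrm{c}$ every non-root vertex reproduces identically (normal vertices use $\mu$, special vertices use $\tilde\mu$ with a uniformly chosen special child), so the two laws of $\rho_m$ differ only through the root's offspring law. Letting $v$ be the spine child of the root, the root under $\Tbf^\infty$ produces $j$ children w.p.\ $j\mu(j)$ with $v$ uniformly chosen, giving $P[v\text{ has }i\text{ elder siblings in }\Tbf^\infty]=\sum_{j\geq i+1}\mu(j)$; under $\Tbf^\mathrm{c}$ the root produces $j$ children w.p.\ $j\mu(j-1)/2$ with $v$ again uniform, giving $P[v\text{ has }i\text{ elder siblings in }\Tbf^\mathrm{c}]=\tfrac12\sum_{k\geq i}\mu(k)$. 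The trivial bound $\sum_{j\geq i+1}\mu(j)\leq \sum_{k\geq i}\mu(k)$ yields a factor of $2$, and since the remaining part of $\rho_m$ (the elder subtrees of the spine vertices and the finite bush attached to the root) is produced identically under both measures given the elder-sibling count, the inequality \eqref{ll-eq-n-c} follows by summing over the configuration of those subtrees.

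The main obstacle is the first inequality: one must be careful that the local CLT is applied in a regime where it is valid, meaning that $L_\tbf(m)$ should be of order $\sqrt{n}$, which is typical but not guaranteed pointwise. Fortunately we only need the ratio bound in the form of a uniform constant $C(a)$ depending on $a$, and not a pointwise asymptotic, so a uniform upper bound of the shape $P[L(n-m)=x]\preceq 1/\sqrt{n-m}$ from Lemma~\ref{lem-rw} suffices; together with $P[|\Tbf|=n]\asymp n^{-3/2}$ this gives the ratio bounded by $\Constant/(1-a)^{3/2}$, uniformly in $\tbf$, as required.
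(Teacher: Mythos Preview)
Your proof is correct and follows essentially the same approach as the paper: you derive the same explicit formulas \eqref{eq-rho1} and \eqref{eq-rho2} for the two probabilities in \eqref{ll-eq-n-inf} via the Lukasiewisz walk, take their ratio, and bound it using the local CLT (your final remark about the uniform bound $P[L(n-m)=x]\preceq (n-m)^{-1/2}$ is a nice way to handle the $o(1)$ term cleanly), and for \eqref{ll-eq-n-c} you compare the elder-sibling distributions of the spine child at the root exactly as the paper does. The only minor imprecision is the phrase ``the finite bush attached to the root,'' which should really mean just the elder siblings' subtrees, since the younger siblings of the spine child are never reached in the first $m$ depth-first vertices; but your computation already reflects this correctly.
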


Before the formal proof, let us set up our notations. For any (finite or infinite) spatial tree $(\tbf,\Snake)$, write $X_i=X_i(\tbf,\Snake)=\Snake(V_i(\tbf))$. We let $P^n_{k,x}$ be the law when $(\tbf,\Snake)$ is as follows. First $\tbf$ is a GW-tree with size $n$ and conditionally on $\tbf$, $\Snake$ is a $\tbf$-indexed random walk sending $V_k$ to $x$, i.e.\ the law of $\Snake:\tbf\rightarrow \Z^d$ is determined by:
$V_k$ is mapped to $x$ and the random variables $(\Snake(v_2)-\Snake(v_1))_{(v_1,v_2)}$ are independent and distributed according to $\theta$, where $(v_1,v_2)$ are over all (undirected) edges of $\tbf$.
Similarly, one can define $P^\infty_{k,x}$ and $P^\mathbf{v}_{k,x}$. We write $P^{n,N}$ for the law when $(\tbf,\Snake)$ is the tree-indexed random walk with size $n$, in $\Torus_N$, with a uniform starting point.

\begin{proof}[Proof of Theorem \ref{visit-small}]
Write $K'=\varphi(K)$. We have (when $N$ is large enough)
\begin{align*}
    P^{n,N}&[\{X_0,\dots,X_{n-1}\}\cap K'\neq\emptyset]\times \frac{N^d}{n}\\
    =&\sum_{x\in  K'}\sum_{k=1}^{n-1}P^{n,N}[X_0,\dots,X_{k-1}\notin K';X_k=x]\frac{N^d}{n}+\sum_{x\in  K'}P^{n,N}[X_0=x]\frac{N^d}{n}\\
    =&\sum_{x\in  K'}\left(\frac{1}{n}\sum_{k=1}^{n-1}N^dP^{n,N}[X_0,\dots,X_{k-1}\notin K';X_k=x]\right)+\frac{|K'|}{n}.
\end{align*}
It suffices to show that for any $x\in K$ and $\epsilon\in(0,0.1)$,
$$
\lim_{N\rightarrow \infty}\frac{1}{n}\sum_{\epsilon n<k<(1-\epsilon) n}|N^dP^{n,N}[X_0,\dots,X_{k-1}\notin K';X_k=\varphi(x)]-\Es_K(x)|=0.
$$
The equality above follows if we can show the following three lemmas.
\begin{lemma}\label{ll-l1}
\begin{multline}\label{eq-l1}
  \lim_{N\rightarrow \infty}\frac{1}{n}\sum_{\epsilon n<k<(1-\epsilon) n}|N^dP^{n,N}[X_0,\dots,X_{k-1}\notin K';X_k=\varphi(x)]-\\
  P_{k,x}^{n}[X_0,X_1,\dots,X_{k-1}\notin K]|=0.
\end{multline}
\end{lemma}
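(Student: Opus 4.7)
The plan is to use translation invariance of the torus to convert the torus probability in \eqref{eq-l1} into an equivalent $\Z^d$ probability, and then to control the wrap-around error via Green's-function estimates for the tree-indexed random walk.

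Since $\Snake^{n,N}$ has a uniform starting point in $\Torus_N$, I would realise it as $X_i=\varphi(\Snake(V_i)+y)$, where $(\Tbf^n,\Snake)$ is a $\Z^d$-valued tree-indexed walk with $\Snake(V_0)=0$ and $y$ is an independent uniform shift in $\Torus_N$. Conditionally on $\Snake$, exactly one $y\in\Torus_N$ satisfies $\varphi(\Snake(V_k)+y)=\varphi(x)$, and after using the symmetry of $\theta$ to translate the spatial walk so that $V_k$ is sent to $x$, one obtains
\[
N^d\, P^{n,N}[X_0,\dots,X_{k-1}\notin K';\,X_k=\varphi(x)] = P^n_{k,x}\bigl[\varphi(X_i)\notin K' \text{ for all } i<k\bigr].
\]
The event $\{\varphi(X_i)\notin K'\}=\{X_i\notin K+N\Z^d\}$ is strictly stronger than $\{X_i\notin K\}$, so the absolute difference inside \eqref{eq-l1} is bounded by $P^n_{k,x}[\exists\,i<k:\,X_i\in (K+N\Z^d)\setminus K]$, which in turn, by a first-moment (union) bound, is at most
\[
\sum_{z\in K}\sum_{w\in\Z^d\setminus\{0\}} G^n_k(z+Nw-x),\qquad G^n_k(y) \DeFine E^n_{k,0}\bigl[\sharp\{j:X_j=y\}\bigr].
\]

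Estimating this sum is the heart of the argument. I would combine two ingredients: (a) the branching Green's-function decay $G^n_k(y)\preceq(|y|\vee1)^{-(d-4)}$, transferred from the infinite tree-indexed-walk estimate of \cite{Z161} to the size-$n$ setting using Lemma~\ref{ll-lm-n-c}; and (b) super-polynomial concentration of the range of $\Snake^n_{k,0}$ inside a ball of radius $Cn^{1/4}\sqrt{\log n}$, so that $G^n_k(y)$ is negligible beyond this scale. Splitting the sum over $w$ at $|w|\asymp n^{1/4}/N$ and using $\sum_{1\le|w|\le R}|w|^{-(d-4)}\asymp R^4$ in $\Z^d$ for $d\ge5$, the dominant piece is
\[
\sum_{1\le|w|\le Cn^{1/4}/N}\frac{|K|}{(N|w|)^{d-4}} \;\le\; \frac{C|K|}{N^{d-4}}\Bigl(\frac{n^{1/4}}{N}\Bigr)^{\!4} = \frac{C|K|\,n}{N^{d}},
\]
while the tail is negligible. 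Each summand on the left-hand side of \eqref{eq-l1} is therefore $O(n/N^d)$ uniformly in $k$ with $\epsilon n<k<(1-\epsilon)n$ and in $x\in K$, so the Cesaro average is $O(N^{\alpha_2-d})\to 0$ since $\alpha_2<d$.

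The main obstacle will be ingredient (a). The decay $|y|^{-(d-4)}$ is established in \cite{Z161} for the \emph{infinite} (GW-conditioned-on-survival) tree-indexed walk, whereas here I need it for the conditioned size-$n$ walk whose spatial root is an arbitrary interior vertex $V_k$. I expect this gap can be bridged by a time-reversal around $V_k$, so that the past and the future of $V_k$ both look like independent copies of the structure controlled by Lemma~\ref{ll-lm-n-c}, up to a bounded density factor whenever $\min(k,n-k)/n$ stays in a compact subset of $(0,1)$.
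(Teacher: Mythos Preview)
Your reduction is exactly the paper's: translation invariance gives
\[
N^d\,P^{n,N}[X_0,\dots,X_{k-1}\notin K';X_k=\varphi(x)]=P^n_{k,x}[X_0,\dots,X_{k-1}\notin K+N\Z^d],
\]
so the difference is bounded by the probability of hitting a nonzero translate of $K$, and one splits according to whether the walk leaves a large ball. Your ingredient~(b) is the paper's Lemma~\ref{large-range}.

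The substantive difference is your ingredient~(a). You aim for a \emph{pointwise} bound $G^n_k(y)\preceq|y|^{4-d}$ uniformly over $k\in(\epsilon n,(1-\epsilon)n)$, and correctly flag the transfer from $\Tbf^\infty$ to $\Tbf^n$ around an interior vertex $V_k$ as the obstacle. The paper sidesteps this entirely: since the statement \eqref{eq-l1} already carries the Ces\`aro average $\tfrac{1}{n}\sum_k$, it suffices to bound the \emph{averaged} Green quantity
\[
\frac{1}{n}\sum_{0\le i<j<n}P^n_{j,x}(X_i=y)\preceq|x-y|^{4-d},
\]
which is Lemma~\ref{snake-x-y}. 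Its proof needs neither Lemma~\ref{ll-lm-n-c} nor any comparison with the infinite tree; one conditions on the tree distance $d(V_i,V_j)=m$ and uses the Devroye--Janson estimate (Lemma~\ref{J2}) that the expected number of vertex pairs at distance $m$ in $\Tbf^n$ is $O(mn)$, giving $\sum_m m\,P^{\mathrm{RW}}_x[Z(m)=y]\asymp|x-y|^{4-d}$. This yields the same final arithmetic as yours (sum over translates $\preceq b^4/N^{d-4}$ with $b\asymp n^{\lambda}/N$) but with no technical debt.

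So your plan is correct, but your ``main obstacle'' is self-inflicted: drop the pointwise claim on $G^n_k$, keep the average over $k$ inside the Green estimate, and invoke Lemma~\ref{J2} directly. The machinery of Lemma~\ref{ll-lm-n-c} is reserved in the paper for Lemma~\ref{ll-l2}, where one genuinely needs to compare the local picture near $V_k$ to the infinite tree.
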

\begin{lemma}\label{ll-l2}
For any $l=l(n)\in[[1,\epsilon n]]$ with $l(n)\rightarrow \infty$,
\begin{multline}\label{eq-l2}
    \lim_{n\rightarrow \infty}\frac{1}{n}\sum_{\epsilon n<k<(1-\epsilon) n}|P_{k,x}^{n}[X_0,X_1,\dots,X_{k-1}\notin K]-\\
    P_{k,x}^{n}[X_{k-l},X_{k-l+1},\dots,X_{k-1}\notin K]|=0.
\end{multline}
\end{lemma}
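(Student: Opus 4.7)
Since $\{X_0,\dots,X_{k-1}\notin K\}\subseteq\{X_{k-l},\dots,X_{k-1}\notin K\}$, the summand in \eqref{eq-l2} is bounded by $P^n_{k,x}[\exists j\in[[0,k-l-1]]:X_j\in K]$, which by a union bound is at most $\sum_{j=0}^{k-l-1}P^n_{k,x}[X_j\in K]$. Substituting $m=k-j$ and swapping the order of summation (valid since $l(n)\leq\epsilon n$), it suffices to show
\[
\sum_{m>l}\Bigl(\frac{1}{n}\sum_{\max(\epsilon n,m)<k<(1-\epsilon)n}P^n_{k,x}[X_{k-m}\in K]\Bigr)\longrightarrow 0\quad\text{as }l=l(n)\to\infty.
\]

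Fix bulk $k$ and $m\leq k$. The event $\{X_{k-m}\in K\}$ depends only on $\rho^+_k(\tbf,\Snake)$, and since the conditional law of $\Snake$ given $\tbf$ is the same under $P^n$ and under the infinite-tree laws $P^\infty$, $P^\mathbf{v}$, the tree-level comparison of Lemma~\ref{ll-lm-n-c} lifts to the spatial setting to give $P^n_{k,x}[X_{k-m}\in K]\leq C_\epsilon\,P^\mathbf{v}_{k,x}[X_{k-m}\in K]$, uniformly over bulk $k$. The invariance of $(P^x_\mathrm{v})_{x\in\Z^d}$ under the shift $\shiftsv$ (Proposition~\ref{invariance}), combined with the local convergence of $\Tbf^n$ around $V_k$ to the reversed tree $\Tl$ (Theorem~\ref{thm-ll}), allows us to identify the $k$-average of the right-hand side, in the $n\to\infty$ limit, with a genuinely shift-invariant probability $P^\mathbf{v}_x[w(-m)\in K]$, where $w(\cdot)$ is the two-sided tree-indexed walk on $\Tbf^v$ obtained from the $\shiftsv$-parametrization with $w(0)=x$. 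Hence
\[
\frac{1}{n}\sum_k P^n_{k,x}[X_{k-m}\in K]\leq C_\epsilon\,P^\mathbf{v}_x[w(-m)\in K]+o_n(1).
\]

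For $d\geq 5$ critical tree-indexed random walks are transient (cf.\ \cite{Z162}), so the branching Green's function $G(x,y):=E^\mathbf{v}_x[\#\{v\in\Tbf^v:\Snake(v)=y\}]$ is finite and summable over $y$ in any finite set; in particular $\sum_{m\in\Z}P^\mathbf{v}_x[w(m)\in K]\leq\sum_{y\in K}G(x,y)<\infty$ (the same finiteness that makes $\BCap(K)$ well-defined in \cite{Z161}). Therefore the tail $\sum_{m>l}P^\mathbf{v}_x[w(-m)\in K]$ converges to $0$ as $l\to\infty$, and this bound is an $m$-summable majorant uniform in $n$, so dominated convergence yields the desired conclusion.

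The main obstacle is the shift-identification in the middle paragraph: Lemma~\ref{ll-lm-n-c} compares DFS-from-root prefixes, whereas $\shiftsv$-invariance concerns the distinct vertex-shift order from infinity on $\Tinfv$. These two orders agree only \emph{locally} around $V_k$, so Theorem~\ref{thm-ll} (or a mild extension of it that records the spatial walk along with the tree) is needed as a bridge to transfer the bulk $k$-average of $P^n_{k,x}$ to a genuinely shift-invariant quantity on $\Tbf^v$. Additional care is required for endpoint effects when either $k$ or $k-m$ approaches $\epsilon n$ or $(1-\epsilon)n$, but these regions contribute an asymptotically negligible fraction of the $k$-average and so do not affect the limit.
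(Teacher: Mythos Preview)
Your overall strategy---union bound over the early indices, dominate $P^n$ by an infinite-tree measure, then invoke shift invariance and branching transience---matches the paper's. But two of your steps do not go through as written.

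\medskip

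\textbf{Wrong comparison measure.} Lemma~\ref{ll-lm-n-c} does \emph{not} give $P^n_{k,x}\leq C_\epsilon P^{\mathrm{v}}_{k,x}$. Its two inequalities yield $P^n\leq C(\epsilon)P^\infty$ and $P^\infty\preceq P^{\mathrm{c}}$; the target is $\tc$, not $\tv$. The paper explicitly notes just after invoking \eqref{ll-eq-n-inf} that the analogous prefix domination $P^\infty\preceq P^{\mathrm{v}}$ is false in general (the sibling-count inequality at the root can fail), and proves instead a weaker bound $P^\infty_{k,x}[A]\preceq P^{\mathrm{v}}_{k,x}[A]+P^\infty_{k,x}[X_0=y]$ via a case split on the root degree and a re-rooting map $f$ on marked trees. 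You cannot skip this.

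\medskip

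\textbf{The bridge is not Theorem~\ref{thm-ll}.} You need to pass from the DFS-from-root quantity $P^{\mathrm{v}}_{k,x}[X_{k-m}\in K]$ (where $V_j$ is the $j$-th vertex in the root order) to the DFS-from-infinity quantity $P^{\mathrm{v}}_x[w(-m)\in K]$ on which Proposition~\ref{invariance} acts. Theorem~\ref{thm-ll} is a statement about the local structure of $\Tbf^n$ around $V_k$; it says nothing about reindexing inside $\Tbf^{\mathrm{v}}$. The actual obstacle is that the first $k$ root-order vertices of $\Tbf^{\mathrm{v}}$ contain a random number $|B(k)|$ of spine vertices interleaved with the non-spine ones, and these two pools are enumerated in opposite directions by the two orders. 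The paper's bridge is correspondingly substantial: it separates the spine set $B(k)$ from $\{V_0,\dots,V_{k-l}\}$, handles spine visits by a local tree-surgery argument (inserting $i_0$ siblings to move a spine hit to a non-spine vertex), controls $|B(k)|$ and $\Bbs(k-l,k)$ via \eqref{cc3}, and only after accounting for the shift $k\mapsto k-|B(k)|$ lands in the $Y$-indexing where Proposition~\ref{invariance} legitimately applies. This bookkeeping is the real content of the lemma.

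\medskip

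Your dominated-convergence endgame (finite branching Green function $\Rightarrow$ summable tail) is fine once you have a genuine $m$-summable majorant independent of $k$ and $n$; the problem is that neither $P^{\mathrm{c}}_{k,x}[X_{k-m}\in K]$ nor $P^\infty_{k,x}[X_{k-m}\in K]$ is such a majorant without the reindexing above, since the law of the tree distance $d(V_k,V_{k-m})$ depends on $k$ in a way that uniform summability alone does not control.
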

\begin{lemma}\label{ll-l3}
For any $l=l(n)\in[[1,\epsilon n]]$ with $1\ll l(n)\ll \sqrt{n}$,
\begin{equation}\label{eq-l3}
      \lim_{n\rightarrow \infty}\sup_{\epsilon n<k<(1-\epsilon) n}|P^n_{k,x}[X_{k-l},X_{k-l+1},\dots,X_{k-1}\notin K]-\Es_K(x)|=0.
\end{equation}
\end{lemma}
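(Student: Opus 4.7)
The plan is to couple the tree $\Tbf^n$ locally around $V_k$ with the infinite random tree $\Tl$ via Theorem~\ref{thm-ll}, extend by walk values, and then let $l(n)\to\infty$. Under $P^x_v$, since $w(0)=x\in K$ we have $\{H_K(w)=0\}=\bigcap_{n<0}\{w(n)\notin K\}$; setting $\pi_l:=P^x_v[w(-l),\dots,w(-1)\notin K]$, monotone convergence gives $\pi_l\searrow\Es_K(x)$ as $l\to\infty$. It therefore suffices to show
$$
\sup_{\epsilon n<k<(1-\epsilon)n}\bigl|P^n_{k,x}[X_{k-l(n)},\dots,X_{k-1}\notin K]-\pi_{l(n)}\bigr|=o(1),
$$
and then invoke $\pi_{l(n)}\to\Es_K(x)$ since $l(n)\to\infty$.

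Because $l(n)\ll\sqrt{n}$, I would pick an auxiliary integer $L=L(n)$ with $L(n)\to\infty$, $L(n)^2/l(n)\to\infty$ and $L(n)/\sqrt n\to 0$ (e.g.\ $L=\lceil\sqrt{l(n)}\,n^{1/6}\rceil$). The quantitative part of Theorem~\ref{thm-ll} (the regime $((L+1)n)^{2/3}\le k$ applies for $k$ in the bulk since $L=o(\sqrt n)$) gives
$$
\sup_{k}\dTV\!\bigl(H_L(\Tbf^n,V_k),\,H_L(\Tl,u_0)\bigr)\preceq k^{-3/8}+\bigl((L+1)/\sqrt n\bigr)^{2/3}=o(1).
$$
Conditionally on the tree, the walk increments are i.i.d.\ with law $\theta$; pinning $V_k$ and $u_0$ both to $x$ and sampling walks in common gives a coupling of the pointed spatial balls that agree with probability $1-o(1)$ uniformly in $k\in[[\epsilon n,(1-\epsilon)n]]$.

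The core is to show that with probability $1-o(1)$ the whole set $\{V_{k-l(n)},\dots,V_{k-1}\}$ lies in $H_L(\Tbf^n,V_k)$, together with the analogous containment of $\{w(-l(n)),\dots,w(-1)\}$ in $H_L(\Tl,u_0)$. Via the L-walk, containment in $\Tbf^n$ is the event
$$
\min_{k-l(n)\le i\le k}L_{\Tbf^n}(i)\ge L_{\Tbf^n}(k)-L,
$$
i.e.\ the conditioned L-walk does not drop by $L$ in $l(n)$ backward steps from the bulk index $k$. Lemma~\ref{ineq-L}, applied to the reversed conditioned walk starting from $L_{\Tbf^n}(k)$, yields a Gaussian-type tail $\exp(-cL^2/l(n))\to 0$ for this failure; the analogous statement for $\Tl$ follows from the same L-walk estimate applied to the backbone around $u_L$. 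Under the pointed-plane-tree isomorphism of Theorem~\ref{thm-ll}, both DFS orders (``from the root'' in $\Tbf^n$ and ``from infinity'' in $\Tl$) restrict, inside the common ball, to the DFS order rooted at $u_L$, so the $l(n)$ predecessors of $V_k$ and of $u_0$ correspond. Combining these pieces yields $P^n_{k,x}[X_{k-l(n)},\dots,X_{k-1}\notin K]=\pi_{l(n)}+o(1)$ uniformly in $k$, as required.

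The main obstacle will be this containment/order-matching step: transferring the pointwise estimate of Lemma~\ref{ineq-L} into a uniform maximal inequality for the conditioned L-walk around a bulk index $k$ (the ratio $A=k\wedge(n-k)\asymp n$ is favourable, but one still needs the reversal), and carefully identifying the DFS-from-root order in the finite tree with the DFS-from-infinity order on $\Tl$ inside the common pointed ball. Both are really bookkeeping, but demand care because $\Tl$ is defined with $u_0$ as a normal GW-root, whereas DFS in the finite tree enters $H_L(\Tbf^n,V_k)$ through the $L$-th ancestor.
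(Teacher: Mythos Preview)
Your approach is essentially the paper's, but you have made the containment step far harder than it is. The key observation you are missing is that the inclusion $\{V_{k-l},\dots,V_{k-1}\}\subseteq H_l(\Tbf^n,V_k)$ is \emph{deterministic} whenever $V_k$ has height at least $l$: if $v_l$ is the $l$-th ancestor of $V_k$ with DFS index $j$, then the $l+1$ vertices $v_l,v_{l-1},\dots,v_0=V_k$ are distinct and DFS-ordered, so $j\le k-l$; and since the subtree $[\Tbf^n]_{v_l}$ occupies a contiguous DFS interval $[j,j+|[\Tbf^n]_{v_l}|-1]\ni k$, it contains all of $V_{k-l},\dots,V_k$. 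Thus no auxiliary parameter $L$, no L-walk maximal inequality, and no appeal to Lemma~\ref{ineq-L} are needed. The paper simply applies Theorem~\ref{thm-ll} directly with $l$ (the hypothesis $k/(l+1)^2\to\infty$ holds because $k\ge\epsilon n$ and $l\ll\sqrt n$), identifies $H_l(\Tl,u_0)$ with $\Tbf^{\mathrm v}$ up to generation $l$, and reads off $P^n_{k,x}[X_{k-l},\dots,X_{k-1}\notin K]\to P_{0,x}^{\mathrm v\infty}[Y_{-l},\dots,Y_{-1}\notin K]=\pi_l$; then $\pi_l\to\Es_K(x)$ finishes.

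Two further comments. First, your L-walk characterisation of the containment event is not obviously correct as stated, and Lemma~\ref{ineq-L} gives only pointwise (not maximal) bounds; you flag this yourself, but the point is that the whole detour is avoidable. Second, only the qualitative statement of Theorem~\ref{thm-ll} is needed here; invoking the quantitative bounds (and the case split) is unnecessary.
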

We need the following two lemmas in the proof of Lemma~\ref{ll-l1}.
\begin{lemma}\label{large-range}
There exist $C,c_1,c_2$, such that for any $n,l\in\N^+$ with $l\in[C_1n^{1/4}(\log n)^{3/4},c_1\sqrt{n}]$,
\begin{equation}
\sup_{0\leq i\leq n}P_{0,x}^{n}[|X_i-x|>l]\preceq \exp(-c_2\frac{l^{4/3}}{n^{1/3}}).
\end{equation}
\end{lemma}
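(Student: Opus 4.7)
The plan is to condition on the tree $\tbf$ first and then split according to whether the height of $V_i$ is above or below an optimized threshold. Conditionally on $\tbf$, the displacement $X_i - x$ is exactly a sum of $h(V_i)$ independent $\theta$-distributed jumps placed along the unique path from $V_0$ to $V_i$, where $h(V_i)$ denotes the graph distance from the root to $V_i$. Since $\theta$ is centred and has finite range, Hoeffding's inequality (applied coordinatewise and combined by a union bound over the $d$ coordinates) yields
$$ P^n_{0,x}[|X_i - x| > l \mid \tbf] \preceq \exp\bigl(-c\,l^2/h(V_i)\bigr) $$
for every $l>0$, with $c>0$ depending only on $d$ and $\theta$. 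Picking the threshold $h_0 := l^{2/3} n^{1/3}$, I then obtain
$$ P^n_{0,x}[|X_i - x| > l] \;\leq\; P^n[h(V_i) > h_0] \;+\; C\exp(-c\,l^2/h_0). $$

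By the choice of $h_0$ the second term is $C\exp(-c\,l^{4/3}/n^{1/3})$. For the first term I invoke a Gaussian tail bound for vertex heights in a Galton--Watson tree conditioned on size, valid when the offspring distribution has exponential moments:
$$ P^n\bigl[h(V_i) > h\bigr] \;\preceq\; \exp(-c'\,h^2/n)\qquad\text{for } h \geq \sqrt{n}, $$
uniformly in $i\in[[0,n]]$. This is a standard sub-Gaussian height estimate (in the spirit of Addario-Berry--Devroye--Janson), and it can also be derived from the L-walk formulation used throughout the paper by combining Lemma~\ref{ineq-L} with the representation of $h(V_i)$ as the number of weak-ladder epochs of the reversed L-walk started at time $i$, under the conditioning $H_{-1}=n$. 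Applied with $h=h_0$ it gives $P^n[h(V_i) > h_0] \preceq \exp(-c'\,l^{4/3}/n^{1/3})$.

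Adding the two contributions yields the desired estimate $P^n_{0,x}[|X_i - x| > l] \preceq \exp(-c_2\,l^{4/3}/n^{1/3})$. The hypothesis $l \in [C_1 n^{1/4}(\log n)^{3/4}, c_1\sqrt{n}]$ is exactly what is needed so that the threshold $h_0 = l^{2/3}n^{1/3}$ lies in the range $[C_1^{2/3}\sqrt{n}(\log n)^{1/2}, c_1^{2/3} n^{2/3}]$: well above $\sqrt{n}$, so the height tail bound is effective (and the final estimate is at least polynomially decaying in $n^{-1}$, as will be required in the application), and well below $n$, so the Hoeffding bound does not degenerate. The main obstacle is Step~2 above: while the height tail bound is classical, obtaining it \emph{uniformly in $i$} from the machinery already developed in the paper requires a short argument either invoking the sub-Gaussian height bound for conditioned GW-trees, or carefully reducing the event $\{h(V_i) > h_0\}$ to ladder-epoch counts of the reversed L-walk under $P^n$ and then applying Lemma~\ref{ineq-L}. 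The remainder is routine conditioning and optimisation.
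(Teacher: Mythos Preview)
Your approach is essentially the same as the paper's: set the threshold $h_0=l^{2/3}n^{1/3}$, bound the displacement conditionally on $h(V_i)\le h_0$ via a sub-Gaussian estimate for the random walk, and bound $P^n[h(V_i)>h_0]$ separately. The one point where you diverge is the source of the height tail bound. You present this as the main obstacle and propose either invoking Addario-Berry--Devroye--Janson or redoing a ladder-epoch argument under $P^n$. In fact the paper already contains the needed estimate: the second assertion of Lemma~\ref{lem-numofst} gives, for $a=0$ and $N=i$, a sub-Gaussian tail for $d(V_0,V_i)=h(V_i)$ uniformly over the relevant range of $i$ (and for $i<h_0$ the event is empty since $h(V_i)\le i$). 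So there is no new work to do here; the proof is two lines once you cite Lemma~\ref{lem-numofst} together with the last assertion of Lemma~\ref{lem-rw}.
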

\begin{lemma}\label{snake-x-y}
For any $x\neq y\in\Z^d$ and $n\in \N^+$,
\begin{equation}\label{eq-snake-x-y}
\frac{1}{n}\sum_{0\leq i<j<n}P^n_{j,x}(X_i=y)\preceq |x-y|^{4-d}
\end{equation}
\end{lemma}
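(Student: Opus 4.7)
By the symmetry of $\theta$ and edge-independence of the tree-indexed random walk, conditional on $\Tbf^n = \tbf$ and $V_j \to x$, the position $X_i$ is $x$ plus a sum of $d_\tbf(V_i, V_j)$ i.i.d.\ $\theta$-steps. Writing $z := y - x$ and $p_k(\cdot)$ for the $k$-step transition kernel, this gives $P^n_{j,x}(X_i = y) = E[p_{d_{\Tbf^n}(V_i, V_j)}(z)]$. Letting $N_k(\tbf) := |\{(v,w) \in \tbf^2 : v \neq w,\ d_\tbf(v,w) = k\}|$ (ordered pairs at tree-distance $k$), one obtains
\[
\frac{1}{n}\sum_{0 \leq i < j < n} P^n_{j,x}(X_i = y) \;=\; \frac{1}{2n}\sum_{k\geq 1} E[N_k(\Tbf^n)]\, p_k(z).
\]

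\textbf{Tree input.} The main ingredient is the pointwise sphere-size bound
\[
E[N_k(\Tbf^n)] \;\preceq\; n\,k, \qquad 1 \leq k \lesssim \sqrt{n}, \tag{$\star$}
\]
with Gaussian decay $E[N_k(\Tbf^n)]\preceq n k\,e^{-ck^2/n}$ for $k \gtrsim \sqrt{n}$; equivalently, the expected size of the tree-sphere of radius $k$ around a uniformly chosen vertex of $\Tbf^n$ is $\preceq k$. The $k$-scaling is dictated by the local limit Theorem~\ref{thm-ll}: in the unrooted tree $\Tl$, a direct computation (counting $u_k$ along the spine, plus descendants at appropriate depths in the GW subtrees grafted onto $u_0, u_1, \dots, u_{k-1}$ with expected counts $1$ and $\sigma^2 = E[\hat\xi]-1$ respectively) gives
\[
E\bigl[|\{v \in \Tl : d(u_0, v) = k\}|\bigr] \;=\; 1 + \mathbf{1}_{k\geq 1} + (k-1)\sigma^2 \;\asymp\; k.
\]
To establish $(\star)$ uniformly in $n$ and $k$ (including for $V_i, V_j$ close to the root or to the last DFS vertex, where the $\Tl$-approximation is not directly available), I would invoke the L-walk representation. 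Setting $A_i := \{s \leq i : L_\tbf(s) = \min_{[s,i]} L_\tbf\}$ for the ancestor index set of $V_i$, a direct check yields the explicit identity $d_\tbf(V_i, V_j) = |A_i \triangle A_j|$; in particular $d(V_i,V_j) \leq h(V_i) + h(V_j)$ and the event $\{d(V_i,V_j) \leq k\}$ forces $\min_{[i,j]} L$ to be close to $L(i)\wedge L(j)$. The sharp conditional tail estimates on the L-walk from Lemma~\ref{ineq-L} (which give $P^n[L(m) \geq h] \preceq \exp(-c h^2/(m\wedge (n-m)))$) then control the joint distribution of $|A_i|$, $|A_j|$, and $|A_i\cap A_j|$, and summation over $(i,j)$ delivers $(\star)$.

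\textbf{Random-walk side and conclusion.} The standard local CLT bound $p_k(z) \preceq k^{-d/2}\exp(-c|z|^2/k)$ from Lemma~\ref{lem-rw}, split at $k = |z|^2$, yields for $d \geq 5$ the heat-kernel sum $\sum_{k\geq 1} k\, p_k(z) \preceq |z|^{4-d}$ (integrability at $k\to\infty$ uses $1 - d/2 \leq -3/2$, and the small-$k$ contribution is controlled by the Gaussian factor). Combining with $(\star)$,
\[
\sum_{k\geq 1} E[N_k(\Tbf^n)]\, p_k(z) \;\preceq\; n \sum_{k\geq 1} k\,p_k(z) \;\preceq\; n\,|z|^{4-d},
\]
where the Gaussian tail of $E[N_k]$ for $k \gtrsim \sqrt{n}$ ensures absolute convergence of the sum. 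Dividing by $2n$ yields \eqref{eq-snake-x-y}.

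The main obstacle is $(\star)$: the tree estimate is the nontrivial content (and the quantitative form, valid uniformly in the position $j$ of the distinguished vertex, requires the L-walk/Lemma~\ref{ineq-L} route rather than a bare application of Theorem~\ref{thm-ll}). Everything else reduces to standard heat-kernel bookkeeping.
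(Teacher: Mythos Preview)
Your approach is essentially identical to the paper's: condition on the tree-distance $k=d_{\Tbf^n}(V_i,V_j)$, bound the expected number of pairs at distance $k$ by $Ckn$, and then sum $\sum_{k\geq 1} k\,p_k(y-x)\preceq |x-y|^{4-d}$. The only difference is that the paper obtains your key input $(\star)$ by directly quoting Theorem~1.1 of Devroye--Janson \cite{DJ11D} (stated as Lemma~\ref{J2}), rather than attempting the L-walk/Lemma~\ref{ineq-L} derivation you sketch---which, while plausible, would need substantial additional work to be made rigorous and is unnecessary given the existing reference.
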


\begin{proof}[Proof of Lemma~\ref{large-range}]
Let $h$ be the height of $V_i$ in $\Tbf^n$. By Lemma~\ref{lem-numofst}, we have
$$
P[h\geq l^{2/3}n^{1/3}]\preceq \exp(-c\frac{l^{4/3}}{n^{1/3}}).
$$
On the other hand, by standard estimates for random walks (e.g.\ the last assertion in Lemma~\ref{lem-rw}) we have
$$
P^n_{0,x}[|X_i-x|>l|h\leq l^{2/3}n^{1/3}]\preceq \exp(-c\frac{l^{4/3}}{n^{1/3}}).
$$
Combining both displays finishes the proof.
\end{proof}
\begin{proof}[Proof of Lemma~\ref{snake-x-y}]
By conditioning on the distance between $V_i$ and $V_j$, one can get
\begin{align*}
    \frac{1}{n}\sum_{0\leq i<j<n}P^n_{j,x}(X_i=y)&=\frac{1}{n}\sum_{0\leq i<j<n}\sum_{k\in\N^+}P^n_{j,x}[d(V_i,V_j)=k]P^n_{j,x}[X_i=y|d(V_i,V_j)=k]\\
    &=\frac{1}{n}\sum_{k\in\N^+}P^\mathrm{RW}_x(Z(k)=y)\sum_{0\leq i<j<n}P^n_{j,x}[d(V_i,V_j)=k],\\
\end{align*}
where $P^\mathrm{RW}_x$ indicates that $Z=(Z(i))$ is a random walk starting at $x$ (with jump distribution $\theta$).

At the current stage, we need the following result, Theorem 1.1 in \cite{DJ11D}, which will also be useful later.
\begin{lemma}\label{J2}
There exists a constant $C$, such that for all $n,k\in\N^+$, the expected number of pairs of vertices with distance $k$, in $\Tbf^n$, is at most $Ckn$.
\end{lemma}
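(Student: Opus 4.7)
The strategy is to reduce the first-moment estimate to a two-point function estimate and then evaluate the latter via a spider decomposition of the two-marked Galton--Watson tree. Writing $U_1, U_2$ for two independent uniform vertices of $\Tbf^n$, one has for every $k\geq 1$
\[
E[N_k(\Tbf^n)] \;=\; n^2\,\P^n\bigl[d(U_1,U_2)=k\bigr],
\]
so the lemma reduces to showing $\P^n[d(U_1,U_2)=k] \leq Ck/n$ uniformly in $k\geq 1$ and $n$.

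For the two-point estimate I will use the classical three-leg (``spider'') decomposition of a plane tree with two marked vertices at distance $k$: the unique path from one mark to the other (of length $k$) passes through their most recent common ancestor $w$, which is in turn joined to the root by an ancestral chain of some length $h^{*}\geq 0$. The remaining vertices hang off these three distinguished chains as independent unconditioned Galton--Watson bushes. Under the GW law, the probability of a given skeleton is a product of offspring factors $\mu(\dout(\cdot))$ over its vertices, and the condition $|\Tbf|=n$ becomes a convolution constraint on the total bush mass, which I will handle via the estimate \eqref{est-tree-size2} and the local CLT bound in Lemma~\ref{lem-rw}.

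Plugging these estimates into the sum over skeletons and using the identity $\sum_{i\geq 1} i\mu(i-1)=1$ to absorb the free offspring weights along the three distinguished chains, the expression for $\P^n[d(U_1,U_2)=k]$ reduces to a double sum over $h^{*}\geq 0$ and over the splittings $a+b=k$ of the path through $w$. The sum over $a+b=k$ produces the combinatorial factor $k$; the sum over $h^{*}$ yields a factor of order $\sqrt n$ (reflecting the typical $\sqrt n$-height of the most recent common ancestor of two uniform vertices in $\Tbf^n$); and the LLT for the Lukasiewicz walk supplies the prefactor $1/n^{3/2}$. Multiplied together these give the desired bound $Ck/n$.

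The main obstacle lies in the regime $k=o(\sqrt n)$, where the bound $Ck/n$ is sharp and one must correctly capture the linear-in-$k$ vanishing at the origin (intuitively, two distinct marked points repel each other, and the continuum CRT two-point distance density vanishes linearly at zero). For $k\geq c\sqrt n$ the argument is easy, since Lemma~\ref{lem-rw} gives a Gaussian $\exp(-ck^2/n)$ decay which is $\preceq k/n$ in this regime. For $k=o(\sqrt n)$, the linear factor must be extracted carefully from the walk estimates of Lemmas~\ref{ineq-L} and~\ref{lem-rw}; alternatively one may invoke Aldous' invariance principle for $\Tbf^n/\sqrt n$ to the Brownian CRT, for which the linear density at zero is classical, and then bootstrap to the discrete estimate via standard local CLT machinery.
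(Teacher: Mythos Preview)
The paper does not prove this lemma at all: it is quoted verbatim as Theorem~1.1 of Devroye--Janson \cite{DJ11D} and used as a black box. So there is no ``paper's own proof'' to compare against.

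Your spider/many-to-two approach is the standard route to this estimate and is essentially what Devroye--Janson do. Two remarks on the write-up. First, the identity you invoke, $\sum_{i\geq 1} i\mu(i-1)=1$, is false (the sum equals $2$); you mean $\sum_i i\mu(i)=1$, the criticality condition that makes the size-biased spine weights sum to one. Second, your final paragraph undercuts your own argument. You have already correctly identified where the linear factor $k$ comes from: there are $\sim k$ ways to split the path through the MRCA as $a+b=k$. That is the entire source of the linear vanishing at the origin; once you have it, the remaining sum over $h^*$ contributes $O(n^{-1/2})$ uniformly in $k$ (via \eqref{est-tree-size2} and the Gaussian tail in Lemma~\ref{lem-rw}, which also handles the regime $k\gtrsim\sqrt n$), and dividing by $P[|\Tbf|=n]\asymp n^{-3/2}$ gives $E[N_k(\Tbf^n)]\preceq kn$. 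There is no separate ``obstacle'' for small $k$, and invoking the CRT invariance principle would be both circular in spirit (it would give asymptotics, not the uniform-in-$n$ bound you need) and unnecessary. The only genuine care needed is that the number $M$ of bushes hanging off the backbone is itself random; you should either condition on $M$ and use the Gaussian bound $P[\sum|\Tbf_i|=n']\preceq M(n')^{-3/2}e^{-cM^2/n'}$ together with concentration of $M$ around $c(h^*+k)$, or work directly with the generating function $(s\phi'(F(s)))^{h^*+k-2}\,s\phi''(F(s))\,F(s)^2$ and extract coefficients by singularity analysis.
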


Hence
$$
\frac{1}{n}\sum_{0\leq i<j<n}P^n_{j,x}[d(V_i,V_j)=k]\preceq k.
$$
Therefore,
$$
\frac{1}{n}\sum_{0\leq i<j<n}P^n_{j,x}(X_i=y)\preceq \sum_{k\in\N^+}kP^\mathrm{RW}_x(Z(k)=y)\asymp |x-y|^{4-d},
$$
where the last step is standard.
\end{proof}

\begin{proof}[Proof of Lemma \ref{ll-l1}]
Let $\varphi^{-1}(K')=\bigcup_{i=0}^{\infty}K_i$, such that $x\in K_0=K$ and
$K_i$ is a translated copy of $K_0$.
Fix some $\lambda \in (\frac{1}{4},\frac{d}{4\alpha_2})$ and let $b=\lfloor \frac{n^\lambda }{N}\rfloor +1$.
\begin{align*}
&\frac{1}{n}\sum_{\epsilon n<k<(1-\epsilon) n}|N^dP^{n,N}[X_0,\dots,X_{k-1} \notin K;X_k=\varphi(x)]-P_{k,x}^{n}[X_0,\dots,X_{k-1}\notin K]|\\
=&\frac{1}{n}\sum_{\epsilon n<k<(1-\epsilon) n}P_{k,x}^{n}[X_0,X_1,\dots,X_{k-1}\notin K]-P_{k,x}^{n}[X_0,X_1,\dots,X_{k-1} \notin \varphi^{-1}(K)]\\
\leq &\frac{1}{n}\sum_{\epsilon n<k<(1-\epsilon) n}P_{k,x}^{n}[\{X_0,\dots,X_{k-1}\}\cap (\cup_{i\geq1}K_i)\neq \emptyset]\\
\leq &\frac{1}{n}\sum_{\epsilon n<k<(1-\epsilon) n}P_{k,x}^{n}[\sup_{0\leq i\leq n-1}|X_i-x|>bN]+\\
&\quad\quad \frac{1}{n}\sum_{\epsilon n<k<(1-\epsilon) n}P_{k,x}^{n}[\sup_{0\leq i\leq n-1}|X_i-x|\leq bN, \{X_0,\dots,X_{k-1}\}\cap (\cup_{i\geq1}K_i)\neq \emptyset].
\end{align*}
The first term above goes to $0$, due to Lemma~\ref{large-range} (since $bN\geq n^{\lambda},\lambda >1/4$).

For the other term, we have when $N$ is large, (write $B_x(r)$ and $S_x(r)$ for the box of radius $r$ centered at $x$ and the boundary of $B_x(r)$ respectively)
\begin{align*}
&\frac{1}{n}\sum_{\epsilon n<k<(1-\epsilon) n}P_{k,x}^{n}[\sup_{0\leq i\leq n-1}|X_i-x|\leq bN, \{X_0,\dots,X_{k-1}\}\cap (\cup_{i\geq1}K_i)\neq \emptyset]\\
\leq &\frac{1}{n}\sum_{\epsilon n<k<(1-\epsilon) n}\sum_{i:K_0\neq K_i\subseteq B_x\left((b+1)N\right)}\sum_{y\in K_i}P_{k,x}^{n}[y\in\{X_0,...X_{k-1}\}]\\
\leq &\sum_{i=1}^{b+1}\sum_{j:K_j\cap S_x(iN)\neq \emptyset}\frac{1}{n}\sum_{\epsilon n<k<(1-\epsilon) n}\sum_{y\in K_j}\sum_{0\leq l<k}P_{k,x}^{n}[X_l=y]\\
\stackrel{\eqref{eq-snake-x-y}}{\preceq} &\sum_{i=1}^{b+1}\sum_{j:K_j\cap S_x(iN)\neq \emptyset}\sum_{y\in K_j}\frac{1}{(iN)^{d-4}}\preceq \sum_{i=1}^{b+1}i^{d-1}\cdot \frac{|K|}{(iN)^{d-4}}\\
\preceq &|K|\frac{b^4}{N^{d-4}}\preceq |K|\frac{n^{4\lambda}/N^4+1}{N^{d-4}}\rightarrow 0,
\end{align*}
where the last convergence follows from $\lambda< \frac{d}{4\alpha_2}$, $\alpha_2<d$ and $n \leq N^{\alpha_2}$.
\end{proof}

\begin{proof}[Proof of Lemma~\ref{ll-l2}]
Note that
\begin{align*}
0\leq& P_{k,x}^{n}[X_{k-l},X_{k-l+1},\dots,X_{k-1}\notin K]-P_{k,x}^{n}[X_0,\dots,X_{k-1}\notin K]\\
    \leq& P_{k,x}^{n}[\{X_0,\dots,X_{k-l}\}\cap K\neq\emptyset]\leq\sum_{y\in K}P_{k,x}^{n}[y\in \{X_0,\dots,X_{k-l}\}]\\
    \leq &C(\epsilon) \sum_{y\in K}P_{k,x}^{\infty}[y\in \{X_0,\dots,X_{k-l}\}],
\end{align*}
where for the last step, we use \eqref{ll-eq-n-inf}.

It would be ideal if we can establish a formula similar to \eqref{ll-eq-n-c}, for $\Tbf^\infty$ and $\Tbf^\mathrm{v}$, the random tree with law $\tv$. But in this case, the following inequality is not generally true.
$$
P[v \mathrm{~has~exact~} i \mathrm{~elder~siblings~in~}\Tbf^\infty]\preceq P[v \mathrm{~has~exact~} i \mathrm{~elder~siblings~in~}\Tbf^\mathrm{v}].
$$
The following inequality is enough for our purpose.
\begin{lemma}
\begin{equation}\label{ll-eq-inf-v}
P_{k,x}^{\infty}[A]\preceq P_{k,x}^{\mathrm{v}}[A]+P_{k,x}^{\infty}[X_0=y],
\end{equation}
where $A$ is the event $y\in \{X_0,\dots,X_{k-l}\}$.
\end{lemma}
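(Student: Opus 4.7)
Split $A = \{X_0 = y\} \cup \bigl(A \cap \{X_0 \neq y\}\bigr)$. The contribution from $\{X_0 = y\}$ is exactly the second term on the right-hand side, so it suffices to establish
\begin{equation*}
P^{\infty}_{k,x}\bigl[A \cap \{X_0 \neq y\}\bigr] \preceq P^{\mathrm{v}}_{k,x}[A]. \qquad (\star)
\end{equation*}

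The key structural observation is that $\Tbf^\infty$ and $\Tbf^{\mathrm{v}}$ differ only at the root: in $\Tbf^\infty$ the spine child sits at a uniform position among the root's $D\sim\musb$ children, whereas in $\Tbf^{\mathrm{v}}$ it is always the last among the root's $\mu(\cdot-1)$-distributed number of children. The offspring distributions of the non-root special vertices along the spine are identical in the two models. Crucially, the depth-first search from the root cannot reach the younger siblings of the spine at the root in $\Tbf^\infty$, since once the DFS enters the infinite spine subtree it never returns. Therefore the events $A$ and $\{X_k = x\}$ depend only on the \emph{accessible part} of the tree. Conditioning on the number $E$ of elder siblings of the spine at the root, the accessible part of $\Tbf^\infty$ and the whole of $\Tbf^{\mathrm{v}}$ share the same conditional law (both consist of $E$ i.i.d.\ unconditioned Galton-Watson trees together with an independent $\Tbf^\infty$-distributed spine subtree at the spine child); only the marginal distribution of $E$ differs:
$$P^{\infty}[E=i] = \sum_{j\geq i+1} \mu(j), \qquad P^{\mathrm{v}}[E=i] = \mu(i).$$

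Letting $Q_i[\cdot]$ denote this common conditional law, inequality $(\star)$ becomes, after applying the identity $\sum_{j\geq i+1}\mu(j)=\sum_{j\geq i+1}P^{\mathrm{v}}[E=j]$ and interchanging summation,
$$\sum_{j\geq 1}\mu(j)\sum_{i=0}^{j-1} Q_i\bigl[A \cap \{X_0 \neq y\}\bigr] \preceq \sum_{j\geq 0}\mu(j)\,Q_j[A].$$
To close the argument I would establish monotonicity $Q_i[A\cap\{X_0\neq y\}]\leq Q_{i+1}[A\cap\{X_0\neq y\}]$ by a coupling---adding an elder subtree as an independent GW-tree only adds chances of visiting $y$---so that $\sum_{i=0}^{j-1} Q_i[A\cap\{X_0\neq y\}]\preceq j\,Q_j[A]$. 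Substituting this bound, the right-hand side reduces to $\sum_j j\mu(j)Q_j[A]$, which by the exponential moments of $\mu$ (together with the slow variation of $Q_j[A]$ in $j$ across the effective support of $\mu$) is controlled by a constant multiple of $\sum_j\mu(j)Q_j[A]=P^{\mathrm{v}}_{k,x}[A]$.

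The main obstacle is the monotonicity/coupling step, because the conditioning on $X_k=x$ ties the spatial walk to the tree structure: adding an elder subtree shifts the DFS position of the vertex $V_k$, which in turn alters the distribution of the bridge-type walk once it is pinned at $V_k$. I would address this by decomposing further according to whether $V_k$ lies in the spine subtree or in one of the elder subtrees, and then exploiting the independence of the spatial walks across disjoint subtrees (those not containing $V_k$) to carry out the coupling without disturbing the pinning.
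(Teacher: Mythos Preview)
Your opening reduction to $(\star)$ and the observation that, conditioned on the number $E$ of elder siblings of the spine at the root, the DFS-accessible part of $\Tbf^\infty$ and the whole of $\Tbf^{\mathrm{v}}$ have the same law, are both correct and match the paper's starting point. However, the two substantive steps that follow both have genuine gaps.

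\textbf{The monotonicity step.} You correctly identify the difficulty: adding an elder subtree shifts the DFS index, so $V_k$ now denotes a different vertex, and the pinning $\Snake(V_k)=x$ changes meaning entirely. Your proposed remedy (case split on which subtree contains $V_k$, then exploit spatial independence of the other subtrees) does not obviously work. In the case where $V_k$ sits in the spine subtree under $Q_i$, after inserting a new elder tree of random size $s$, the pinned vertex becomes $V_{k-s}$ of the old configuration, which may now lie in an elder subtree or at a different point of the spine subtree; the law of $X_0,\dots,X_{k-l}$ conditioned on this new pinning bears no transparent relation to the old one. You would need a pointwise inequality between two conditional laws that are pinned at \emph{different} tree vertices, and nothing in the proposal supplies that.

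\textbf{The final inequality.} Even if monotonicity $Q_i[A_0]\le Q_{i+1}[A_0]$ held, the conclusion $\sum_j j\mu(j)Q_j[A]\preceq \sum_j\mu(j)Q_j[A]$ does not follow from exponential moments alone. By FKG, monotonicity gives the \emph{opposite} direction $\sum_j j\mu(j)Q_j\ge \sum_j\mu(j)Q_j$; for the upper bound you would need $Q_j[A]\le C\,Q_0[A]$ (or some comparable control), uniformly in $k,l,x,y$. No such bound is established, and ``slow variation across the effective support of $\mu$'' is not a property you have proved. Concretely, the ratio $\sum_j j\mu(j)Q_j/\sum_j\mu(j)Q_j$ can be as large as $E[D\mid D\ge J]$ whenever $Q_j$ is negligible for $j<J$, and you have not ruled this out.

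\textbf{How the paper proceeds.} The paper avoids both problems by conditioning not on $E$ but on the entire marked tree $\rho'_{k+1}$ (the first $k+1$ vertices with spine vertices marked). It then splits according to whether the root has one child (where a direct constant-ratio comparison of $\Tbf^\infty$ and $\Tbf^{\mathrm{v}}$ applies) or at least two children. In the latter case it introduces an injective re-rooting map $f$: re-root $\tbf$ at its first child, making the old root the last child of the new root, and add the new root to the marked set. One then checks $P[\rho'_{k+1}(\Tbf^\infty)=\tbf]\le P[\rho'_{k+1}(\Tbf^{\mathrm{v}})=f(\tbf)]$ and $P^\infty_{k,x}[A_0\mid\rho'_{k+1}=\tbf]\le P^{\mathrm{v}}_{k,x}[A\mid\rho'_{k+1}=f(\tbf)]$, and injectivity of $f$ lets one sum. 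This bijective comparison sidesteps any need for monotonicity in $E$ or for controlling the growth of $Q_j$.
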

\begin{proof}
Write $A_0$ for the event $y\in\{X_1,\dots,X_{k-l}\}$. We have $P_{k,x}^{\infty}[A]\leq P_{k,x}^{\infty}[A_0]+P_{k,x}^{\infty}[X_0=y]$. For any $\tbf\in\Tinf$ and $m\in\N^+$, write $\rho'_m(\tbf)$ for the marked tree $\rho_m(\tbf)$ with those vertices that are in the spine of $\tbf$, marked. Write $M$ for the set of all marked trees $\tbf$ such that $P[\rho'_{k+1}(\Tbf^\infty)=\tbf]>0$. Note that
\begin{eqnarray*}
P_{k,x}^{\infty}[A]=\sum_{\tbf\in M}P[\rho'_{k+1}(\Tbf^\infty)=\tbf]P_{k,x}^{\infty}[A|\rho'_{k+1}(\Tbf^\infty)=\tbf],\\
P_{k,x}^\mathrm{v}[A]\geq\sum_{\tbf\in M}P[\rho'_{k+1}(\Tbf^\mathrm{v})=\tbf]P_{k,x}^\mathrm{v}[A|\rho'_{k+1}(\Tbf^\mathrm{v})=\tbf].
\end{eqnarray*}
By a slight abuse of notation, we also write $\Tbf^\infty$ ($\Tbf^\mathrm{v}$) for the tree in the tree-indexed random walk corresponding to $P_{k,x}^{\infty}$($P_{k,x}^\mathrm{v}$).

We divide $M$ into three disjoint subsets:
\begin{eqnarray*}
&&M_1=\{\tbf\in M: o\mathrm{~has~exactly~one~child~and ~this~child~is~marked}\},\\
&&M_2=\{\tbf\in M: o\mathrm{~has~exactly~one~child~and ~this~child~is~not~marked}\},\\
&&M_3=\{\tbf\in M: o\mathrm{~has~}\geq 2\mathrm{~children}\}.
\end{eqnarray*}
Obviously we have
\begin{equation*}
P_{k,x}^{\infty}[A|\rho'_{k+1}(\Tbf^\infty)=\tbf]=P_{k,x}^{\mathrm{v}}[A|\rho'_{k+1}(\Tbf^\mathrm{v})=\tbf], \mathrm{~for~}\tbf\in M_1\cup M_2.
\end{equation*}
Moreover, it is easy to see
\begin{eqnarray*}
P[\rho'_{k+1}(\Tbf^\infty)=\tbf]=\frac{1-\mu(0)}{\mu(0)}P[\rho'_{k+1}(\Tbf^\mathrm{v})=\tbf], \mathrm{~for~}\tbf\in M_1,\\
P[\rho'_{k+1}(\Tbf^\infty)=\tbf]=\frac{\sum_{i\geq 2}(i-1)\mu(i)}{\sum_{i\geq 1}\mu(i)}P[\rho'_{k+1}(\Tbf^\mathrm{v})=\tbf], \mathrm{~for~}\tbf\in M_2.
\end{eqnarray*}
Hence we obtain
\begin{align*}
\sum_{\tbf\in M_1\cup M_2}&P[\rho'_{k+1}(\Tbf^\infty)=\tbf]P_{k,x}^{\infty}[A|\rho'_{k+1}(\Tbf^\infty)=\tbf]\preceq\\
&\sum_{\tbf\in M_1\cup M_2}P[\rho'_{k+1}(\Tbf^\mathrm{v})=\tbf]P_{k,x}^{\mathrm{v}}[A|\rho'_{k+1}(\Tbf^\mathrm{v})=\tbf]\leq P_{k,x}^\mathrm{v}[A].
\end{align*}
It suffices to show that
$$
\sum_{\tbf\in M_3}P[\rho'_{k+1}(\Tbf^\infty)=\tbf]P_{k,x}^{\infty}[A_0|\rho'_{k+1}(\Tbf^\infty)=\tbf]\preceq P_{k,x}^{\mathrm{v}}[A].
$$

For any $\tbf\in M_3$, write $f(\tbf)$ for $\tbf$ re-rooted at the first child of $o$ with the new root added to the set of marked vertices (and $o$ being regarded as the last child of the new root). It is easy to check that
\begin{equation*}
   P_{k,x}^{\infty}[A_0|\rho'_{k+1}(\Tbf^\infty)=\tbf]\leq P_{k,x}^{\mathrm{v}}[A|\rho'_{k+1}(\Tbf^\mathrm{v})=f(\tbf)], \mathrm{~for~}\tbf\in M_3.
\end{equation*}
On the other hand, one can verify that
$$
P[\rho'_{k+1}(\Tbf^\infty)=\tbf]\leq P[\rho'_{k+1}(\Tbf^\mathrm{v})=f(\tbf)],\mathrm{~for~}\tbf\in M_3.
$$
Hence, we have
\begin{align*}
 \sum_{\tbf\in M_3}&P[\rho'_{k+1}(\Tbf^\infty)=\tbf]P_{k,x}^{\infty}[A_0|\rho'_{k+1}(\Tbf^\infty)=\tbf]\\
 &\leq \sum_{\tbf\in M_3} P[\rho'_{k+1}(\Tbf^\mathrm{v})=f(\tbf)] P_{k,x}^{\mathrm{v}}[A|\rho'_{k+1}(\Tbf^\mathrm{v})=f(\tbf)]\leq P_{k,x}^\mathrm{v}[A],
\end{align*}
Note that we use the fact that $f$ is injective for the last step. The proof of \eqref{ll-eq-inf-v} is complete.
\end{proof}

Thanks to the last lemma, (note that $P_{k,x}^{\infty}[X_0=y]=P_{0,y}^{\infty}[X_k=x]\stackrel{k\rightarrow \infty}{\longrightarrow}0$,) it suffices to show, for any $y\in K$,
\begin{equation}\label{cc}
\lim_{n\rightarrow \infty}\frac{1}{n}\sum_{\epsilon n<k<(1-\epsilon) n}P_{k,x}^\mathrm{v}[y\in\{X_0,\dots,X_{k-l}\}]=0.
\end{equation}

Write $B=B(k)$ and $\Snake(B)$ for the set of non-root vertices in the spine, that are not after $V_k$, and the range of $B$, respectively. It is standard that $P_{k,x}^\mathrm{v}[V_k\in B]=o(1)$ (when $k\rightarrow \infty$). Hence we have
\begin{equation}\label{cc1}
  P_{k,x}^\mathrm{v}[y\in\{X_0,\dots,X_{k-l}\}]\leq P_{k,x}^\mathrm{v}[y\in\{X_0,\dots,X_{k-l}\}, V_k\notin B]+o(1).
\end{equation}
By changing the spatial tree a bit, around the fist visiting point of $y$ by $\Snake(B)$, one can show that for any $i_0\in\N^+$ fixed, with $\mu(i_0)>0$,
\begin{multline*}
P_{k,x}^\mathrm{v}[y\in\Snake(B)\setminus (\Snake(\{V_0,\dots,V_{k-1}\}\setminus B)),V_k\notin B]\\
\preceq \sum_{i=0,i_0} P_{k+i,x}^\mathrm{v}[y\in\Snake(\{V_0,\dots,V_{k-1}\}\setminus B),V_k\notin B].
\end{multline*}
Here is a naive argument. Assume that $(\tbf, \Snake)$ is a spatial tree in the event on the left hand side. We need to argue that we can make $(\tbf, \Snake)$ visit $y$ for some vertex before $V_k$, not in the spine, by changing $(\tbf, \Snake)$ a bit, e.g.\ by changing one variable in some edge of $\tbf$ or inserting $i_0$ new edges and the corresponding variables in the new edges. Let $v\in B$ be the first vertex such that $\Snake(v)=y$ and $v_1$ be the parent of $v$. If $v$ has elder siblings, then just pick up the eldest one and change the variable in the edge connecting $v_1$ and that vertex such that the sum of those two variables in the edges connecting $v$ and that vertex is zero (we can do so since $\theta$ is symmetric). If $v$ has no elder siblings, we can achieve our goal by inserting $i_0$ elder siblings for $v$ and sending the eldest one to $y$.

From the last inequality, one can obtain
\begin{multline}\label{cc2}
P_{k,x}^\mathrm{v}[y\in \{X_0,\dots,X_{k-l}\}, V_k\notin B]\\
\preceq \sum_{i=0,i_0} P_{k+i,x}^\mathrm{v}[y\in \Snake(\{V_0,\dots,V_{k-1}\}\setminus B), V_k\notin B].
\end{multline}
Write $\Bbs(a,b)=\Bbs(a,b;\tbf)$ for the number of spine vertices in $\{V_i(\tbf):i\in[[a,b]]\}$. The following estimate of $\Bbs(a,b;\Tbf^v)$ is standard and easy:
\begin{equation}\label{cc3}
 \lim_{n\rightarrow\infty}\sup_{m\in\N^+}P[\Bbs(m,m+n;\Tbf^v)\geq n/2]=0.
\end{equation}

Joining \eqref{cc} with \eqref{cc1}, \eqref{cc2} and \eqref{cc3}, one can see that it suffices to show that (by making $\epsilon$ smaller)
\begin{multline}\label{rd}
\lim_{n\rightarrow \infty}\frac{1}{n}\sum_{\epsilon n<k<(1-\epsilon) n}P_{k,x}^\mathrm{v}[y\in\Snake(\{V_0,\dots,V_{k-1}\}\setminus B(k)), \\
V_k\notin B(k), \Bbs(k-l,k)\leq l/2]=0.
\end{multline}

Let us introduce our notations. Recall that for $\tbf\in\Tinf$, we have defined the depth-first search from infinity. Write $(V^\infty_i(\tbf))_{i\in\Z}$ for all vertices arranged with that order such that $V_0^\infty(\tbf)$ is the root (we will drop $\tbf$ in the notation when $\tbf$ is obvious, as before). For any infinite spatial tree $(\tbf,\Snake)$, write $Y_i=\Snake(V_i^\infty(\tbf))$. We write $P_{k,x}^{\mathrm{v\infty}}$ for the law when $\tbf$ has the law of $\Tbf^\mathrm{v}$ and conditionally on $\tbf$, $\Snake$ is a $\tbf$-indexed random walk sending $V^\infty_k$ to $x$.

Let us continue our proof. Note that
\begin{align*}
   &P_{k,x}^\mathrm{v}[y\in\Snake(\{V_0,\dots,V_{k-1}\}\setminus B(k)), V_k\notin B(k),\Bbs(k-l,k)\leq l/2]\\
   =&\sum_{m=0 }^{k-1}P_{k,x}^\mathrm{v}[y\in\Snake(\{V_0,\dots,V_{k-1}\}\setminus B(k)), V_k\notin B(k), |B(k)|=m,\Bbs(k-l,k)\leq l/2]\\
   =&\sum_{m=0 }^{k-1}P_{k-m,x}^{\mathrm{v}\infty}[y\in\Snake(\{V_0,\dots,V_{k-1}\}\setminus B(k)),\\
   &\quad\quad\quad\quad\quad\quad\quad\quad\quad\quad V_k\notin B(k), |B(k)|=m,\Bbs(k-l,k)\leq l/2]\\
   \leq&\sum_{m=0 }^{k-1}P_{k-m,x}^{\mathrm{v}\infty}[y\in\{Y_0,\dots,Y_{k-m-l/2}\},V_k\notin B(k), |B(k)|=m,\Bbs(k-l,k)\leq l/2]\\
   \leq&\sum_{m\leq \lfloor k^{0.6}\rfloor}P_{k-m,x}^{\mathrm{v}\infty}[y\in\{Y_0,\dots,Y_{k-m-l/2}\},|B(k)|=m]+o(1).
\end{align*}
Note that for the third line, we use the fact that conditionally on $|B(k)|=m$ and $V_k\notin B(k)$, $V_k=V^\infty_{k-m}$; for the second last line, that conditionally on $\Bbs(k-l,k)\leq l/2$,
$$
\Snake(\{V_0,\dots,V_{k-1}\}\setminus B(k))\subset\{Y_0,\dots,Y_{k-m-l/2}\};
$$
and for the last line, that $P_{k-m,x}^{\mathrm{v}\infty}[|B(k)|\geq k^{0.6}]=P_{0,x}^{\mathrm{v}\infty}[|B(k)|\geq k^{0.6}]=o(1)$.

Hence, \eqref{rd} can be reduced again to
\begin{equation}\label{rd2}
\lim_{n\rightarrow \infty}\frac{1}{n}\sum_{\epsilon n<k<(1-\epsilon) n}\sum_{m\leq \lfloor k^{0.6}\rfloor}P_{k-m,x}^{\mathrm{v}\infty}[y\in\{Y_0,\dots,Y_{k-m-l/2}\},|B(k)|=m]=0.
\end{equation}
We have (when $n$ is large enough)
\begin{align*}
  &\sum_{\epsilon n<k<(1-\epsilon) n}\sum_{m\leq \lfloor k^{0.6}\rfloor}P_{k-m,x}^{\mathrm{v}\infty}[y\in\{Y_0,\dots,Y_{k-m-l/2}\},|B(k)|=m]\\
  \leq&\sum_{\epsilon n/2<j<(1-\epsilon/2) n}\sum_{m\leq 2\lfloor j^{0.6}\rfloor}P_{j,x}^{\mathrm{v}\infty}[y\in\{Y_0,\dots,Y_{j-l/2}\},|B(k)|=m]\\
  \leq&\sum_{\epsilon n/2<j<(1-\epsilon/2) n}P_{j,x}^{\mathrm{v}\infty}[y\in\{Y_0,\dots,Y_{j-l/2}\}]\\
  =&\sum_{\epsilon n/2<j<(1-\epsilon/2) n}P_{0,x}^{\mathrm{v}\infty}[y\in\{Y_{-j},Y_{-j+1}\dots,Y_{-l/2}\}],
\end{align*}
where for the last step we use Proposition~\ref{invariance}.

Note that $P_{0,x}^{\mathrm{v}\infty}[y\in\{Y_{-j},Y_{-j+1}\dots,Y_{-l/2}\}]\leq P_{0,x}^{\mathrm{v}\infty}[y\in\cup_{i\leq -l/2}\{Y_i\}]$. Since on $\Z^d(d\geq5)$, every finite subset is branching transient (see Section 2 in \cite{Z162}) we get
$\lim_{l\rightarrow \infty}P_{0,x}^{\mathrm{v}\infty}[y\in\cup_{i\leq -l/2}\{Y_{i}\}]=0$ for any $x,y\in\Z^d$. Therefore \eqref{rd2} follows and we finish the proof.
\end{proof}

\begin{proof}[Proof of Lemma~\ref{ll-l3}]
First note that the subtree in $\Tbf^n$ generated by the vertices corresponding to $X_{k-l},\dots,X_{k}$ is contained in $H_l(\Tbf^n,V_k)$ (recall the definition of $H_l$ in Section~4.2), at least when the height of $V_k$ is bigger than $l$. By Theorem~\ref{thm-ll}, we have $H_l(\Tbf^n,V_k)\stackrel{\mathrm{d}}{\rightarrow}H_l(\Tl,u_0)$ (unifromly for $k\in[[\epsilon n,(1-\epsilon)n]]$). Note that, as an unrooted tree, $H_l(\Tl,u_0)$ has the same distribution as $\Tbf^\mathrm{v}$ up to generation $l$. Hence, we have
$$
\lim_{n\rightarrow\infty}\max_{k\in[[\epsilon n,(1-\epsilon)n]]}|P^n_{k,x}[X_{k-l},\dots,X_{k-1}\notin K]-P_{0,x}^{\mathrm{v}\infty}[Y_{-l},\dots,Y_{-1}\notin K]|=0.
$$
Note that the second probability converges to $\Es_K(x)$ when $l\rightarrow \infty$ (recall that $\Es_K(x)=P_x^\mathrm{v}(W_K^0)$). We finish the proof.
\end{proof}
\end{proof}

\subsection{Proof of Theorem \ref{ll-thm-main}}

We have constructed the ingredients we need and are ready to show our main result. We follow the arguments in \cite{ARZ15}.

Let $T$ be the corresponding family tree in $\Snake^{n,N}$. Apply Theorem~\ref{cutting} to $T$, with $(d_0,e,\alpha,\beta,\delta,\epsilon)=(d,2,2.01,d-0.01,0.01,0.01)$ (note that $\epsilon_0=3\beta/2-\alpha-d-7\delta/2=0.5d-2.06\geq0.44$). In fact, the values of $\alpha,\beta,\epsilon,\delta$ are not important. What we essentially need is $\alpha>2$. With high probability ($1-C/N^{\epsilon}$), we can find subtrees $T_1,\dots,T_{m}$ of $T$ as in the theorem. We denote by $A$ this event. We write $P[\bullet|(m;k_1,\dots,k_{m};\tbf)]$ (respectively $p(m;k_1,\dots,k_{m};\tbf)$) for the conditional probability conditioned (respectively the probability) that $A$ is true, the number of $T_i$ is $m$, the size of $T_i$ is $k_i$ ($i=1,\dots,m$) and the subtree $\hat{T}$ with $m$ (ordered) marked vertices indicating the locations of $v_i$ is $\tbf$. Under $P[\cdot|(m;k_1,\dots,k_{m};\tbf)]$, the trees $T_1,\dots,T_{m}$ are independent and distributed as the GW-trees with the given sizes. We have
\begin{align*}
P&[\mathrm{Range}(\Snake^{n,N})\cap \varphi(K)=\emptyset]\\
&=\sum p(m;k_1,\dots,k_{m};\mathbf{t})P[\mathcal{S}(T)\cap \varphi(K)=\emptyset|(m;k_1,\dots,k_{m};\mathbf{t})]+o(1),
\end{align*}
where the sum runs over all possible values of $\Upsilon=(m;k_1,\dots,k_{m};\mathbf{t})$ such that $p(\Upsilon)>0$ (depending on $N$). For notational ease, we simply write $\Snake$ for $\Snake^{n,N}$. It suffices to show
\begin{equation}\label{C3ob}
\lim_{N\rightarrow \infty}\max_{\Upsilon}|P[\mathcal{S}(T)\cap \varphi(K)=\emptyset|\Upsilon]-\exp\left(-u\BCap(K)\right)|=0.
\end{equation}

The equality above can be reduced to:
\begin{multline}\label{C3o1}
\lim_{N\rightarrow \infty}\max_{\Upsilon}|P[\mathcal{S}(T)\cap \varphi(K)=\emptyset|\Upsilon]-\\
P\left[\left(\cup_{i=1}^{m}\mathcal{S}(T_i)\right)\cap \varphi(K)=\emptyset|\Upsilon\right]|=0;
\end{multline}
\begin{multline}\label{C3o2}
\lim_{N\rightarrow \infty}\max_{\Upsilon}|P\left[\left(\cup_{i=1}^{m}\mathcal{S}(T_i)\right)\cap \varphi(K)=\emptyset|\Upsilon\right]-\\
\prod_{i=1}^{m}P\left[\left(\mathcal{S}(T_i)\right)\cap \varphi(K)=\emptyset|\Upsilon\right]|=0;
\end{multline}
\begin{equation}\label{C3o3}
\lim_{N\rightarrow \infty}\max_{\Upsilon}\left|\prod_{i=1}^{m}P\left[\left(\mathcal{S}(T_i)\right)\cap \varphi(K)=\emptyset|\Upsilon\right]-\exp\left(-u\BCap(K)\right)\right|=0.
\end{equation}

The proof of \eqref{C3o1} is easy.
\begin{align*}
&|P[\mathcal{S}(T)\cap \varphi(K)=\emptyset|\Upsilon]-P\left[\left(\cup_{i=1}^{m}\mathcal{S}(T_i)\right)\cap \varphi(K)=\emptyset|\Upsilon\right]|\\
\leq &|P\left[\mathcal{S}(T\setminus \left(\cup_{i=1}^{m}\mathcal{S}(T_i)\right))\cap \varphi(K)\neq\emptyset|\Upsilon\right]|\leq \frac{n}{N^\delta}\frac{|K|}{N^d}\rightarrow 0.
\end{align*}
The last inequality is due to Condition (2) in Theorem~\ref{cutting}, and the fact that $\Snake(v)$ is uniformly distributed in $\Torus_N$ for all $v\in \hat{T}$.

For \eqref{C3o3}, by Condition (1) and (4) in Theorem~\ref{cutting}, we know that $|T_i|\in[N^{a_1},N^{a_2}]$ (for some $a_1,a_2\in(0,d)$) and that conditionally on the size, $T_i$ is a GW-tree conditioned on that size. Hence we can apply Theorem~\ref{visit-small}. Then together with Condition (2), one can get \eqref{C3o3}.

We now turn to \eqref{C3o2}. We need the following lemma.
\begin{lemma}\label{C3o}
There exist positive $c$ and $C$, such that, for any $N\in\N^+$ and  $\Upsilon=(m;k_1,\dots,k_{m};\mathbf{t})$ with $p(\Upsilon)>0$,$k\in[[1,m-1]]$, then
\begin{multline}\label{C3ob2}
|P\left[\left(\cup_{i=k}^{m}\mathcal{S}(T_i)\right)\cap \varphi(K)=\emptyset|\Upsilon\right]-
P\left[\left(\mathcal{S}(T_k)\right)\cap \varphi(K)=\emptyset|\Upsilon\right]\times\\
P\left[\left(\cup_{i=k+1}^{m}\mathcal{S}(T_i)\right)\cap \varphi(K)=\emptyset|\Upsilon\right]|\leq C\exp(-cN^{\alpha-2}).
\end{multline}
\end{lemma}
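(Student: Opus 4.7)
The driving geometric fact is Condition (3) of Theorem~\ref{cutting}: the path $\iota_k$, of length $\lfloor N^\alpha \rfloor$, separates $T_k$ from all other $T_j$ in $T$. Since $\alpha>2$, an $\lfloor N^\alpha\rfloor$-step random walk on $\Torus_N$ is essentially mixed, so the position of the root of $T_k$ relative to the rest of the tree should be approximately uniform and therefore independent of everything outside $T_k$. Write the vertices of $\iota_k$ as $v_k=u_0,u_1,\ldots,u_{\lfloor N^\alpha\rfloor}=w_k$, where $u_{j+1}$ is the parent of $u_j$ in $T$, and set $Y_k=\Snake(w_k)$. Let $\mathcal{G}$ be the $\sigma$-algebra generated by $\Upsilon$, by $\{\Snake(v):v\in V(\hat T)\setminus\{u_0,\ldots,u_{\lfloor N^\alpha\rfloor-1}\}\}$, and by $\{\Snake(v):v\in V(T_j),\,j\neq k\}$. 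Write $E_1=\{\mathcal{S}(T_k)\cap\varphi(K)=\emptyset\}$ and $E_2=\{\cup_{i>k}\mathcal{S}(T_i)\cap\varphi(K)=\emptyset\}$; the goal is $|P[E_1\cap E_2\mid\Upsilon]-P[E_1\mid\Upsilon]P[E_2\mid\Upsilon]|\leq C\exp(-cN^{\alpha-2})$. By construction, $E_2$ and $Y_k$ are $\mathcal{G}$-measurable.

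The $\lfloor N^\alpha\rfloor$ edge-increments of $\Snake$ along $\iota_k$ are fresh i.i.d.\ copies of $\theta$ independent of $\mathcal{G}$, and, by Condition (4) of Theorem~\ref{cutting}, the pair $(T_k,\Snake|_{T_k})$ is (given $\Upsilon$ and $\Snake(v_k)$) a GW$(k_k)$-indexed random walk started at $\Snake(v_k)$, independent of $\mathcal{G}$. Hence, conditionally on $\mathcal{G}$,
\begin{equation*}
\Snake(v_k)=Y_k+W,
\end{equation*}
where $W$ is an $\lfloor N^\alpha\rfloor$-step random walk on $\Torus_N$ driven by $\theta$, independent of the internal BRW on $T_k$. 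Setting $g(y)=P[\mathcal{S}(T_k)\cap\varphi(K)=\emptyset\mid\Snake(v_k)=y,\Upsilon]$, we get
\begin{equation*}
P[E_1\mid\mathcal{G}]=\sum_{x\in\Torus_N}p_{\lfloor N^\alpha\rfloor}(0,x)\,g(Y_k+x),
\end{equation*}
where $p_n(0,\cdot)$ denotes the $n$-step transition kernel of the $\theta$-walk on $\Torus_N$.

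The key analytic input is the standard mixing bound for the torus walk: since $\theta$ is symmetric, aperiodic, finite-range, and not supported on a strict subgroup, Fourier analysis on $\Torus_N$ gives the spectral-gap estimate
\begin{equation*}
\max_{x\in\Torus_N}|p_n(0,x)-N^{-d}|\leq CN^{-d}\exp(-cn/N^2),\qquad n\geq N^2.
\end{equation*}
With $n=\lfloor N^\alpha\rfloor$ and $0\leq g\leq 1$, this yields $|P[E_1\mid\mathcal{G}]-N^{-d}\sum_y g(y)|\leq C\exp(-cN^{\alpha-2})$. Under $P[\cdot\mid\Upsilon]$ the position $\Snake(v_k)$ is uniform on $\Torus_N$ (the root is uniform and $\theta$-increments preserve uniformity on the torus), hence $N^{-d}\sum_y g(y)=P[E_1\mid\Upsilon]$. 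Therefore $P[E_1\mid\mathcal{G}]=P[E_1\mid\Upsilon]+O(\exp(-cN^{\alpha-2}))$ uniformly, and the tower property gives
\begin{equation*}
P[E_1\cap E_2\mid\Upsilon]=E\bigl[1_{E_2}P[E_1\mid\mathcal{G}]\bigm|\Upsilon\bigr]=P[E_1\mid\Upsilon]P[E_2\mid\Upsilon]+O(\exp(-cN^{\alpha-2})),
\end{equation*}
which is \eqref{C3ob2}. The main technical point is arranging the conditioning so that the $\iota_k$-increments remain i.i.d.\ fresh copies of $\theta$ independent of both $E_2$ and of $T_k$'s internal BRW, serving as a mixing ``buffer'' between the two sides; the torus mixing bound itself is classical, and $\alpha>2$ (the hypothesis $\alpha=2.01$ in the application of Theorem~\ref{cutting}) is exactly what makes the error a negative power of a positive power of $N$.
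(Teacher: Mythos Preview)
Your argument is correct and is essentially the paper's own proof: both use Condition~(3) of Theorem~\ref{cutting} to isolate the path $\iota_k$ as a length-$\lfloor N^\alpha\rfloor$ random-walk buffer between $T_k$ and $\cup_{j\neq k}T_j$, and then apply the torus mixing bound (the paper records this as Lemma~\ref{mixing} and phrases the computation via an explicit double sum over the endpoint positions $\Snake(o_1),\Snake(o_2)$ of $\iota_k$ rather than through a $\sigma$-algebra and the tower property, but the content is identical). One small imprecision worth fixing: your $\mathcal{G}$ as defined includes $\Snake(v)$ for vertices $v\in\hat T$ that hang off interior nodes $u_j$ of $\iota_k$ (such vertices do exist generically, since $T_k$ is obtained from the M-subtree $T'_k$ via Proposition~\ref{cut-2} and the siblings of the $u_j$ inside $T'_k$ land in $\hat T$), and these values leak partial information about the $\iota_k$-increments, so $W$ is not independent of your $\mathcal{G}$ as written; simply shrink $\mathcal{G}$ to be generated by $\Upsilon$ together with $\Snake$ restricted to the component of $T\setminus\iota_k$ containing the root of $T$---by Condition~(3) this still makes $E_2$ and $Y_k$ measurable, and now the $\iota_k$-increments (hence $W$) are genuinely independent of $\mathcal{G}$.
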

With this lemma one can use induction to show
\begin{multline}
|P\left[\left(\cup_{i=1}^{m}\mathcal{S}(T_i)\right)\cap \varphi(K)=\emptyset|\Upsilon\right]-\prod_{i=1}^{m}P[\mathcal{S}(T_i)\cap \varphi(K)=\emptyset|\Upsilon]|\\
\leq (m-1)C\exp(-cN^{\alpha-2}).
\end{multline}
Since $m$ is bounded by a polynomial of $N$, the right hand side tends to 0, which implies \eqref{C3o2}.

\begin{proof}[Proof of Lemma \ref{C3o}]
Let $o_1$ and $o_2$ be the endpoints of $\iota_k$ in Theorem~\ref{cutting} (say $o_1\in T_k$). For any $x,y\in \T_N$, define
\begin{align}
f(x)&=P\left[\left(\mathcal{S}(T_k)\right)\cap \varphi(K)=\emptyset|\mathcal{S}(o_1)=x,\Upsilon\right],\\
h(y)&=P\left[\left(\cup_{i=k+1}^{m}\mathcal{S}(T_i)\right)\cap \varphi(K)=\emptyset|\mathcal{S}(o_2)=y,\Upsilon\right].
\end{align}
By Condition (3) in Theorem~\ref{cutting}, $\iota_k$ separates $T_k$ and $\cup_{i=k+1}^{m}T_i$, so we have
\begin{equation}
P\left[\left(\cup_{i=k}^{m}\mathcal{S}(T_i)\right)\cap \varphi(K)=\emptyset|
\mathcal{S}(o_1)=x,\mathcal{S}(o_2)=y,\Upsilon\right]=f(x)\times h(y).
\end{equation}
Therefore,
\begin{align*}
P\left[\left(\cup_{i=k}^{m}\mathcal{S}(T_i)\right)\cap \varphi(K)=\emptyset|\Upsilon\right]&=\sum_{x,y\in \T_N}f(x)h(y)P[\mathcal{S}(o_1)=x,\mathcal{S}(o_2)=y|\Upsilon]\\
&=N^{-d}\cdot\sum_{x,y\in \T_N}f(x)h(y)P^{\mathrm{RW}}_x[Z(\lfloor N^{\alpha}\rfloor)=y],
\end{align*}
where $P^{\mathrm{RW}}_x$ is the law of $Z=(Z(n))_{n\in\N}$ which is a random walk starting at $x$ with distribution $\theta$ in $\Torus_N$.
Note that
\begin{eqnarray}
P\left[\left(\mathcal{S}(T_k)\right)\cap \varphi(K)=\emptyset|\Upsilon\right]=
N^{-d}\sum_{x\in\T_N}f(x);\\
P\left[\left(\cup_{i=k+1}^{m}\mathcal{S}(T_i)\right)\cap \varphi(K)=\emptyset|\Upsilon\right]=
N^{-d}\sum_{y\in\T_N}h(y).
\end{eqnarray}
Hence the left hand side of \eqref{C3ob2} is
\begin{multline*}
|N^{-d}\sum_{x,y\in \T_N}f(x)h(y)(P^{\mathrm{RW}}_x[Z(\lfloor N^{\alpha}\rfloor)=y]-N^{-d})|\\
\leq \max_{x\in \T_N}\sum_{y\in \T_N}|P^{\mathrm{RW}}_x[Z(\lfloor N^{\alpha}\rfloor)=y]-N^{-d}|.
\end{multline*}
Now \eqref{C3ob2} follows from the following standard result in the theory of mixing time (e.g.\ see \cite{LP17M}).
\begin{lemma}\label{mixing}
There exist positive numbers $c$ and $C$ such that for any $N\in\N^+$ and $a>2$,
\begin{equation}
\max_{x,y\in \T_N}|P^{\mathrm{RW}}_x[Z(\lfloor N^{a}\rfloor)=y]-N^{-d}|\leq C\exp(-cN^{a-2}).
\end{equation}
\end{lemma}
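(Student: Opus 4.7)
The plan is to diagonalize the transition operator by Fourier analysis on the torus and then track the decay of each Fourier mode to the power $t$. Since the transition kernel $p(x,y) = \theta(y-x)$ on $\Torus_N$ is translation invariant, it is diagonalized by the characters $e_\xi(x) = e^{2\pi i\xi\cdot x/N}$, indexed by $\xi \in (\Z/N\Z)^d$, with eigenvalues $\hat\theta(\xi) = \sum_{z\in\Z^d}\theta(z)e^{2\pi i\xi\cdot z/N}$. Plancherel inversion then yields
$$
P^{\mathrm{RW}}_x[Z(t)=y] - \frac{1}{N^d} = \frac{1}{N^d}\sum_{\xi\neq 0}\hat\theta(\xi)^t\, e^{2\pi i\xi\cdot(y-x)/N},
$$
so the quantity of interest is bounded in absolute value by $N^{-d}\sum_{\xi\neq 0}|\hat\theta(\xi)|^t$, and the problem reduces to controlling the Fourier coefficients.

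Next I would establish the eigenvalue bound
$$
|\hat\theta(\xi)| \leq \exp\bigl(-c\min(|\xi|^2/N^2,\,1)\bigr),\qquad \xi\neq 0,
$$
where $\xi$ is identified with its canonical representative in $[-N/2,N/2)^d$. Two regimes arise. In the low-frequency regime $|\xi|/N \ll 1$, Taylor expansion, together with the assumption that $\theta$ is symmetric, centered, and not supported on a strict subgroup (so its covariance matrix is strictly positive definite), gives $\hat\theta(\xi) = 1 - \Theta(|\xi|^2/N^2)$. In the high-frequency regime, the aperiodicity of $\theta$ and the fact that $\theta$ is not supported on a strict subgroup force $|\hat\theta(\xi)| \leq 1 - c'$ uniformly for $|\xi|/N$ bounded below.

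Finally, taking $t = \lfloor N^a\rfloor$ and organizing the Fourier sum by concentric shells $|\xi|\asymp k$, each of cardinality $O(k^{d-1})$, I obtain
$$
\sum_{\xi\neq 0}|\hat\theta(\xi)|^t \;\leq\; C\sum_{k=1}^{O(N)}k^{d-1}\exp\bigl(-c\,N^{a-2}\,k^2\bigr).
$$
Since $a > 2$, the coefficient $N^{a-2}\to\infty$, so the $k=1$ term $\exp(-cN^{a-2})$ dominates and the remaining tail is absorbed into the constants; combined with the $N^{-d}$ prefactor (which we may simply drop in the bound) this delivers the claimed $C\exp(-cN^{a-2})$. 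The main technical point is the uniform-in-$N$ passage between the two regimes of the eigenvalue bound, since one must ensure the constant $c$ in the low-frequency quadratic bound matches the spectral gap $1-c'$ at the crossover $|\xi|/N\asymp 1$; both, however, are standard consequences of the standing assumptions on $\theta$, and the lemma itself is essentially textbook (cf.\ \cite{LP17M}).
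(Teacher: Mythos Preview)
Your argument is correct. The paper itself does not prove this lemma at all: it simply states it as a ``standard result in the theory of mixing time'' and refers the reader to \cite{LP17M}. Your Fourier-analytic proof is exactly the kind of argument that reference would supply, and the two nontrivial inputs you identify---the quadratic behavior of $1-\hat\theta(\xi)$ near $\xi=0$ coming from the positive-definite covariance of $\theta$, and the uniform spectral gap $|\hat\theta(\xi)|\leq 1-c'$ away from $0$ coming from aperiodicity---are precisely the consequences of the standing hypotheses on $\theta$ that make the bound work. So you have supplied more than the paper does, and correctly.
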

\end{proof}

\section{Cover times of tori by tree-indexed random walks}
The main goal of this section is to construct the result of cover times of $d$-dimensional tori by tree-indexed random walks conditioned on sizes. Recall that we adopt the same assumptions as in the last section on the offspring distribution $\mu$ and the jump distribution $\theta$ (and $d\geq 5$). As before, write $\Snake^{n,N}$ for the tree-indexed random walk in torus $\Torus_N$, with size $n$ and a uniform starting point.

\begin{theorem}\label{thm-cover}
Let $n=n(N)$ be an integer-valued function of $N$. For any $\epsilon\in(0,1)$, we have if $n(N)>(1+\epsilon)N^d\log N^d/\BCap(\{0\})$, then
$$
\lim_{N\rightarrow\infty}P[\mathrm{Range}(\Snake^{n,N})=\Torus_N]=1;
$$
if $n(N)<(1-\epsilon)N^d\log N^d/\BCap(\{0\})$, then
$$
\lim_{N\rightarrow\infty}P[\mathrm{Range}(\Snake^{n,N})=\Torus_N]=0.
$$
\end{theorem}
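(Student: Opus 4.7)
The plan is to combine the asymptotic visiting-probability (Theorem~\ref{visit-small}) with the tree-decomposition (Theorem~\ref{cutting}), and then apply the classical first- and second-moment method as in the random-walk cover-time theory. The key preliminary step is a refined non-visiting estimate: for any $K \ssubset \Z^d$ and any integer-valued $n = n(N)$ with $n(N) \in [[N^d/(\log N)^2,\, N^d(\log N)^2]]$,
\begin{equation}\label{cov-key}
P[\mathrm{Range}(\Snake^{n,N}) \cap \varphi(K) = \emptyset] = \exp\bigl(-(n/N^d)\BCap(K)(1+o(1))\bigr).
\end{equation}
To prove \eqref{cov-key} I would apply Theorem~\ref{cutting} with $d_0 = d$, with $\alpha$ slightly greater than $2$ (to enable mixing), and with $\beta$ slightly less than $d$, so that the random tree of size $n$ decomposes into $m$ subtrees $T_1, \ldots, T_m$ with sizes $|T_i| \asymp N^{d-\delta}$, satisfying $\sum_i |T_i| = n(1+o(1))$ and separated by spine segments of length $\lfloor N^{\alpha}\rfloor$. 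Each $T_i$ sits in the subcritical regime of Theorem~\ref{visit-small}, which (conditionally on $|T_i|$ and using Condition~(4) of Theorem~\ref{cutting}) gives $P[\Snake(T_i) \cap \varphi(K) = \emptyset] = \exp(-(|T_i|/N^d)\BCap(K)(1+o(1)))$. The traces of distinct $T_i$ are decoupled as in Lemmas~\ref{C3o}--\ref{mixing}: since $\alpha>2$, mixing of the torus random walk over the separating spine contributes an error $\exp(-cN^{\alpha-2})$ per pair, which is negligible. Multiplying the single-piece estimates and using $\sum_i|T_i| = n(1+o(1))$ yields \eqref{cov-key}.

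\textbf{Upper bound.} The law of $\Snake^{n,N}$ is translation-invariant on $\Torus_N$ (uniform starting point), so a union bound gives
\[
P[\mathrm{Range}(\Snake^{n,N}) \neq \Torus_N] \leq N^d \cdot P[0 \notin \mathrm{Range}(\Snake^{n,N})].
\]
For $n > (1+\epsilon)N^d \log N^d / \BCap(\{0\})$, \eqref{cov-key} with $K = \{0\}$ bounds the right-hand side by $N^d\exp(-(1+\epsilon/2)\log N^d) = N^{-d\epsilon/2} \to 0$.

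\textbf{Lower bound.} Use the second moment method on $Z = |\Torus_N \setminus \mathrm{Range}(\Snake^{n,N})|$. By \eqref{cov-key}, $E[Z] = N^d \cdot P[0\notin\mathrm{Range}(\Snake^{n,N})] \geq N^{d\epsilon/2}\to\infty$, and translation invariance gives
\[
E[Z^2] = N^d \sum_{z\in \Torus_N} P\bigl[\{0,z\}\cap \mathrm{Range}(\Snake^{n,N})=\emptyset\bigr] = N^d\sum_{z}\exp\bigl(-(n/N^d)\BCap(\{0,z\})(1+o(1))\bigr),
\]
where representatives $z\in \Z^d$ with $\|z\|_\infty\leq N/2$ are chosen. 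Using the identity $\BCap(\{0,z\}) = 2\BCap(\{0\}) - O(\|z\|^{-(d-4)})$ from Remark~\ref{rm1}, the contribution from $\|z\|\geq R(N):=(\log N)^{2/(d-4)}$ is $(1+o(1))\,E[Z]^2$, while the near-diagonal part is $\leq R^d E[Z] = o(E[Z]^2)$ since $E[Z]$ grows polynomially in $N$. The Paley--Zygmund inequality $P[Z>0]\geq E[Z]^2/E[Z^2]$ then yields $P[\mathrm{Range}(\Snake^{n,N})=\Torus_N]\to 0$.

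\textbf{Main obstacle.} The whole argument hinges on \eqref{cov-key}: because the target exponent is of order $\log N$, the error in Theorem~\ref{visit-small}, the decomposition error from Theorem~\ref{cutting}, and the mixing error must all be controlled \textbf{multiplicatively}, so that they still produce an $o(1)$ factor in the exponent after being summed/compounded across the $\asymp(\log N)\cdot N^{\delta}$ pieces. In practice this forces one to upgrade Theorem~\ref{visit-small} to a quantitative form with an explicit rate $o(1)$ in $\frac{N^d}{n}P[\mathrm{Range}\cap \varphi(K)\neq\emptyset] = \BCap(K)(1+o(1))$, uniform over $n\in[[N^{\alpha_1},N^{\alpha_2}]]$ and stronger than $1/\log N$; this in turn requires a quantitative refinement of Lemmas~\ref{ll-l1}--\ref{ll-l3}.
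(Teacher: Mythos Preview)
Your upper bound is essentially the paper's argument: decompose via Theorem~\ref{cutting}, apply Theorem~\ref{visit-small} to each piece, decouple with Lemma~\ref{C3o}, and take a union bound over the $N^d$ points. Your ``main obstacle'' is overstated, however: no quantitative rate in Theorem~\ref{visit-small} is needed. Since each piece has size $l_i\in[N^{\beta-2\delta}/(\log N)^{1.01},N^\beta]$ and Theorem~\ref{visit-small} gives $P[\varphi(0)\in\Snake(T_i)]=(l_i/N^d)\BCap(\{0\})(1+o(1))$ uniformly for sizes in that range, one has $\prod_i(1-p_i)=\exp\bigl(-(\sum_i l_i/N^d)\BCap(\{0\})(1+o(1))\bigr)$, and $\sum_i l_i=n(1+O(N^{-\delta}))$ absorbs into the $o(1)$ in the exponent. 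So \eqref{cov-key} for \emph{fixed} $K$ comes for free.

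The lower bound has a genuine gap. Your second-moment computation requires \eqref{cov-key} for $K=\{0,z\}$ \emph{uniformly over all} $z\in\Torus_N$, in particular for $z$ with $\|z\|$ of order $N$. But Theorem~\ref{visit-small} is stated and proved only for fixed $K\ssubset\Z^d$; its proof (Lemmas~\ref{ll-l1}--\ref{ll-l3}) converges to $\Es_K(x)$ at a rate that a priori depends on $K$, and there is no reason this rate is uniform as the diameter of $K$ grows with $N$. So the input you need for $E[Z^2]$ is simply not available.

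The paper handles the lower bound differently. It decomposes first, \emph{conditions} on $\hat T$ and on $\Snake|_{\hat T}$, and then exploits the conditional \emph{independence} of the pieces $(T_i,\Snake|_{T_i})$. After conditioning, the starting points $\Snake(o_i)$ are fixed rather than uniform, so Theorem~\ref{visit-small} no longer applies; the paper proves a fixed-start version, Proposition~\ref{visit-small-fixed}, valid when the target point is at distance $\geq N^\gamma$ from the start. To use it, one does not run the second moment over all of $\Torus_N$ but over a carefully chosen set $F$ of $\lfloor N^{(1-\eta)d}\rfloor$ well-separated points that avoid $\Snake(\hat T)$ and lie at distance $\geq N^\gamma$ from every $\Snake(o_i)$. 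The covariance term is then controlled piece by piece: one needs only the single-piece bound $P[x,y\in\Snake(T_i)]\leq C(l_i/N^d)\bigl(\dist(x,y)^{-(d-4)}+(\log N)^{3+\epsilon}N^{\beta-d}\bigr)$, which is Lemma~\ref{range-xy} (proved via Lemma~\ref{J2} on pair distances in conditioned Galton--Watson trees). This replaces your appeal to $\BCap(\{0,z\})$ and sidesteps the uniformity issue entirely.
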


We need two lemmas in our proof of the theorem above. Theorem~\ref{visit-small} gives asymptotics for the probability that a set is visited by a tree-indexed random walk with a uniform starting point. The following proposition gives asymptotics for a tree-indexed random walk with a fixed starting point, which is not too close to the set.

\begin{prop}\label{visit-small-fixed}
For any $\beta_1,\beta_2,\gamma$ fixed with $(2d+4)/3<\beta_1<\beta_2<d$, $(3d-3\beta_1+4)/d<\gamma<1$, $n=n(N)$, any integer-valued function of $N$ satisfying $n(N)\in[N^{\beta_1},N^{\beta_2}]$, and $K\ssubset \Z^d$, we have
\begin{equation}\label{eq-visit-small-fixed}
    \lim_{N\rightarrow\infty}\sup_{x\in\Torus_N:\dist(x,\varphi(K))>N^\gamma}|\frac{N^d}{n}P[\mathrm{Range}(\Snake)\cap \varphi(K)\neq\emptyset]-\BCap(K)|=0,
\end{equation}
where $\Snake=\Snake_x^{n,N}$ is a tree-indexed random walk in $\Torus_N$ with size $n$ and starting point $x$ and $\dist$ is the graph distance of the torus.
\end{prop}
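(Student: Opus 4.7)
The plan is to derive the fixed-starting-point asymptotic from the uniform-start Theorem~\ref{visit-small} via a burn-in/mixing argument on the family tree $T$ of $\Snake=\Snake^{n,N}_x$. First I would apply Proposition~\ref{cut-2} with $k=\lceil N^\kappa\rceil$ and $L=\lceil N^\lambda\rceil$, for $\kappa>2$ and $\lambda\geq(2+\alpha)\kappa$ to be tuned, to obtain with probability $1-o(1)$ a vertex $v$ at graph distance $k$ from the root and a subtree $T'=[T]_v$ of size at least $n-L$ that is, conditionally on its size and on everything outside, a Galton--Watson tree of that size. This splits $T$ into a burn-in part $\hat T=T\setminus T'$ of size $\leq L$ and a main part $T'$ of size $n(1+o(1))$.

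For the main piece I would observe that, conditionally on the walk outside $T'$, the starting location of $\Snake$ on $T'$ is $\Snake(v)=x+Z_k$, where $Z_k$ is a $\Torus_N$-valued random walk of length $k$ independent of $T'$. Lemma~\ref{mixing} (applied with $\kappa>2$) shows that $Z_k$ is within total-variation $o(1)$ of the uniform law on $\Torus_N$, so Theorem~\ref{visit-small} applied to $T'$ (which is Galton--Watson of size $|T'|=n(1+o(1))$ with essentially uniform starting point) yields
$$
P_x\bigl[\mathrm{Range}(\Snake|_{T'})\cap\varphi(K)\neq\emptyset\bigr]=\frac{n}{N^d}\BCap(K)(1+o(1)),
$$
uniformly in $x$.

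The main obstacle is bounding the burn-in contribution
$$
P_x\bigl[\mathrm{Range}(\Snake|_{\hat T})\cap\varphi(K)\neq\emptyset\bigr]=o(n/N^d)=o(N^{\beta_1-d}).
$$
The natural union bound is
$$
\sum_{h\geq 0}\E\bigl[w_h(\hat T)\bigr]\cdot P^{\mathrm{RW}}_x[Z_h\in\varphi(K)],
$$
where $w_h$ counts vertices of $\hat T$ at height $h$. Using the distance hypothesis $\dist(x,\varphi(K))>N^\gamma$, standard Gaussian/mixing estimates on $\Torus_N$ yield $P^{\mathrm{RW}}_x[Z_h\in\varphi(K)]\preceq |K|h^{-d/2}\exp(-cN^{2\gamma}/h)$ for $h\lesssim N^2$ and $\preceq |K|/N^d$ for $h\gtrsim N^2\log N$. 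Combined with profile bounds on $\E[w_h(\hat T)]$ in the spirit of Lemma~\ref{cs-lem1}, the contribution from $h\gtrsim N^2\log N$ is $\preceq |K|L/N^d=N^{\lambda-d}$, the contribution from $h\lesssim N^{2\gamma}$ is Gaussian-small, and the intermediate range $N^{2\gamma}\lesssim h\lesssim N^2$ must be delicately summed. Balancing $\lambda<\beta_1$ (from $L\ll n$) against $\lambda\geq(2+\alpha)\kappa>4$ (from Proposition~\ref{cut-2} plus the mixing requirement $\kappa>2$), together with the intermediate-depth estimate, is exactly what produces the stated hypothesis $\gamma>(3d-3\beta_1+4)/d$; the hard part of the proof will be the careful optimization of $\kappa,\lambda$ and the profile control of $\hat T$ needed to make this balance work.
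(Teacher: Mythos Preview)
Your overall strategy matches the paper's: apply Proposition~\ref{cut-2} to split off a large subtree $T_0$ rooted at distance $k\approx N^{2+}$ from the root, use mixing (Lemma~\ref{mixing}) to feed the main piece into Theorem~\ref{visit-small}, and control the burn-in $T\setminus T_0$ separately. Two points need correction.

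First, the failure probability of the cut must be $o(n/N^d)$, not merely $o(1)$: on the bad event you have no control over the visiting probability, and an $o(1)$ contribution to $P[\mathrm{visit}]$ blows up after multiplying by $N^d/n$. The paper takes $k=\lfloor N^{2+c}\rfloor$ and $L=\lfloor k^2N^{2\alpha}\rfloor$; the failure probability from Proposition~\ref{cut-2} is then $\preceq k^{1+o(1)}/\sqrt{L}\preceq N^{-\alpha+o(1)}$, and the requirement $N^{-\alpha}=o(n/N^d)$ reads $\alpha+\beta_1>d$.

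Second, and more importantly, the burn-in estimate is far simpler than your height decomposition suggests. For any vertex $v\in T\setminus T_0$, conditionally on the tree structure, $\Snake(v)$ is the position of a random walk started at $x$ after some (random) number of steps, so for any $y\in\varphi(K)$,
\[
P[\Snake(v)=y]\ \leq\ \sup_{h\geq 0}P^{\mathrm{RW}}_x[Z(h)=y]\ \preceq\ \dist(x,y)^{-d}\ \leq\ N^{-d\gamma},
\]
by the local CLT on the torus. A crude union bound over the $\leq L$ vertices of $T\setminus T_0$ and the $|K|$ targets gives $P[\Snake(T\setminus T_0)\cap\varphi(K)\neq\emptyset]\preceq L|K|N^{-d\gamma}$. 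No profile control of $\hat T$ and no splitting into depth regimes is needed. The requirement $LN^{-d\gamma}=o(n/N^d)$ reads $\beta_1+d\gamma>4+2\alpha+d$; combined with $\alpha+\beta_1>d$ from the first point, this is solvable for some $\alpha>0$ precisely when $\gamma>(3d-3\beta_1+4)/d$, which is the stated hypothesis. So what you flag as ``the hard part'' is in fact a two-line computation once you use the uniform-in-time LCLT bound rather than a height-by-height sum.
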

\begin{proof}
Fix any $x$ with $\dist(x,\varphi(K))>N^\gamma$. By our assumptions on $\beta_1,\gamma$, we can find $\alpha>0$ satisfying
\begin{equation}\label{eq-alpha}
\beta_1+d\gamma>4+2\alpha+d, \;\alpha+\beta_1>d.
\end{equation}
Write $T$ for the corresponding tree in $\Snake$. Applying Proposition~\ref{cut-2} with $k=\lfloor N^{2+c}\rfloor$, $L=\lfloor k^2N^{2\alpha}\rfloor$ to $T$, we obtain that, with probability at least $1-CN^{-(\alpha-c)}$ ($C$ may depend on $\beta_1,\beta_2,\gamma,c$), there exists a subtree $T_0$ which is at distance $k$ away from the root of $T$ without losing more than $L$ vertices. Assume that this event has occured and condition on the value of $T\setminus T_0$. Write $o_0$ for the root of $T_0$. Similarly to the situation in the proof of Theorem~\ref{visit-small}, one can see that the law of $\Snake(o_0)$ is very close to the uniform measure. Therefore, by Theorem~\ref{visit-small}, we have
$$
P[\Snake(T_0)\cap \varphi(K)\neq\emptyset]\sim\frac{|T_0|}{N^d}\BCap(K)\sim\frac{n}{N^d}\BCap(K).
$$
We point out that
\begin{equation}\label{range-error}
P[\Snake(T \setminus T_0)\cap\varphi(K)\neq\emptyset]\preceq \frac{L|K|}{N^{d\gamma}}.
\end{equation}
If so, then we finish the proof since by \eqref{eq-alpha}, we have (when choosing $c$ small enough)
$$
N^{-(\alpha-c)}\preceq\frac{n}{N^{d+o(1)}},\quad \frac{L|K|}{N^{d\gamma}}\preceq \frac{n}{N^{d+o(1)}}.
$$

We still need to show \eqref{range-error}. It can be obtained from the following. For any $y\in\Torus_N$ with $\dist(x,y)>N^\gamma$ and $v\in T\setminus T_0$, we have
$$
P[\Snake(v)=y]\leq \sup_{n\in\N}P[Z_x(n)=y]\preceq N^{-d\gamma},
$$
where $Z_x=(Z_x(n))_{n\in N}$ is a random walk starting at $x$ in $\Torus_N$, and the last step follows from the Local Central Limit Theorem.
\end{proof}

\begin{lemma}\label{range-xy}
For any $\beta_1<\beta_2\in(0,d)$, $\epsilon>0$ and $n=n(N)$, any integer-valued function of $N$ such that $n(N)\in[N^{\beta_1},N^{\beta_2}]$, there exists $C=C(\epsilon)$, such that for any $x\neq y\in\Torus_N$,
$$
P[\{x,y\}\subset\mathrm{Range}(\Snake^{n,N})]\leq C\frac{n}{N^d}(\frac{1}{(\dist(x,y))^{d-4}}+\frac{(\log N)^{3+\epsilon}}{N^{d-\beta_2}}).
$$
\end{lemma}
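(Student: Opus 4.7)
The plan is to union-bound the event $\{x,y\}\subset\mathrm{Range}(\Snake^{n,N})$ over pairs of vertices of the tree $T$ underlying $\Snake^{n,N}$ and then evaluate the resulting sum by combining a torus heat-kernel estimate with Lemma~\ref{J2}. Since $x\neq y$,
\[
\mathbf{1}_{\{x,y\}\subset\mathrm{Range}(\Snake^{n,N})}\leq\sum_{i\neq j}\mathbf{1}_{\Snake^{n,N}(V_i)=x}\,\mathbf{1}_{\Snake^{n,N}(V_j)=y}.
\]
Conditionally on $T$, the uniform starting point makes $\Snake^{n,N}(V_i)$ uniform on $\Torus_N$, and the independent $\theta$-jumps along the edges of $T$ make $\Snake^{n,N}(V_j)-\Snake^{n,N}(V_i)$ distributed as a torus random walk after $d(V_i,V_j)$ steps. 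Hence
\[
P[\Snake^{n,N}(V_i)=x,\,\Snake^{n,N}(V_j)=y\mid T]=\frac{1}{N^d}\,P^{\mathrm{RW}}_x[Z(d(V_i,V_j))=y],
\]
where $Z=(Z(k))_{k\geq 0}$ is the $\theta$-random walk on $\Torus_N$. Taking expectations and summing,
\[
P\!\left[\{x,y\}\subset\mathrm{Range}(\Snake^{n,N})\right]\leq\frac{1}{N^d}\sum_{k\geq 1}P^{\mathrm{RW}}_x[Z(k)=y]\cdot E[N_k],\qquad N_k\DeFine\#\{(i,j):i\neq j,\,d(V_i,V_j)=k\}.
\]

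Next, I will split at the scale $M\DeFine\lceil\sqrt{n(\log n)^{2+\epsilon}}\rceil$. For $k\leq M$, Lemma~\ref{J2} gives $E[N_k]\preceq kn$, so the contribution is at most $(n/N^d)\sum_{k\leq M}k\,P^{\mathrm{RW}}_x[Z(k)=y]$. The standard torus estimate yields
\[
P^{\mathrm{RW}}_x[Z(k)=y]\preceq k^{-d/2}\exp\!\bigl(-c\dist(x,y)^2/k\bigr)\;\;\text{for }k\leq N^2,\qquad P^{\mathrm{RW}}_x[Z(k)=y]\preceq N^{-d}\;\;\text{for }k>N^2.
\]
Since $d\geq 5$, the first regime gives $\sum_{k\leq N^2}k^{1-d/2}\exp(-c\dist(x,y)^2/k)\preceq\dist(x,y)^{4-d}$, while the second gives $\sum_{N^2<k\leq M}k\,N^{-d}\preceq M^2/N^d=n(\log n)^{2+\epsilon}/N^d$. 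Combining with the prefactor and using $n\leq N^{\beta_2}$, the $k\leq M$ piece is at most
\[
\frac{n}{N^d}\!\left(\frac{C}{\dist(x,y)^{d-4}}+\frac{C(\log N)^{2+\epsilon}}{N^{d-\beta_2}}\right),
\]
which is stronger than the stated bound for $N$ large.

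For the remaining range $k>M$, $\sum_{k>M}N_k$ vanishes unless $\mathrm{diam}(T)\geq M$, so $\sum_{k>M}E[N_k]\leq n^2\,P[\mathrm{diam}(T)\geq M]$, and combined with $P^{\mathrm{RW}}_x[Z(k)=y]\preceq N^{-d}$ the contribution is bounded by $n^2\,P[\mathrm{diam}(T)\geq M]/N^{2d}$. Since $\mathrm{diam}(T)\leq 2\,\mathrm{height}(T)$, it suffices to prove that $P[\mathrm{height}(\Tbf^n)\geq M/2]$ decays faster than any polynomial in $N$; using that $\mathrm{height}(\Tbf^n)$ is controlled by $\max_{k\leq n}L(k)$ for the Lukasiewicz walk $L$ together with the last assertion of Lemma~\ref{lem-rw}, the unconditioned walk satisfies $P[\max_{k\leq n}L(k)\geq M/2]\preceq\exp(-cM^2/n)=\exp(-c(\log n)^{2+\epsilon})$, and conditioning on the probability-$\asymp n^{-3/2}$ event $\{H_{-1}=n\}$ costs only a polynomial factor, making the $k>M$ remainder negligible. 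The main obstacle is cleanly lining up the three regimes of the torus heat kernel with the diameter scale $M$; once $M$ is chosen slightly above the typical diameter $\sqrt{n}$ by a polylog factor, the $\dist(x,y)^{4-d}$ term emerges without extra log corrections and the torus-mixing range produces the correct $(\log N)^{2+\epsilon}/N^{d-\beta_2}$ error, which is within the $(\log N)^{3+\epsilon}/N^{d-\beta_2}$ budget of the statement.
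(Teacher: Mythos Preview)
Your core strategy --- union bound over ordered pairs of tree vertices, condition on the tree distance $k=d(V_i,V_j)$, and invoke Lemma~\ref{J2} to get $E[N_k]\preceq kn$ --- is exactly what the paper does. The difference lies in how the torus correction is handled. The paper lifts to $\Z^d$, writes $P[Z_{\varphi(x)}(k)=\varphi(y)]=\sum_z P[\tilde Z_x(k)=y+Nz]$, and uses the \emph{spatial} range bound of Lemma~\ref{large-range} (which says the tree-indexed walk stays within radius $\asymp n^{1/4}(\log n)^{3/4}$ of its root) to discard all $|z|>b=N^{\beta_2/4-1}(\log N)^{3/4+o(1)}$; summing $\sum_{|z|\leq b}|x-y-Nz|^{4-d}$ then yields the two displayed terms. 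You instead truncate in the \emph{tree-distance} variable at $M\asymp\sqrt{n}\,(\log n)^{1+\epsilon/2}$ and appeal to a diameter tail for $k>M$. Both routes are viable; your splitting is arguably more direct since it avoids the lift to $\Z^d$.

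There is, however, a genuine gap in your tail argument for $k>M$. You write that ``$\mathrm{height}(\Tbf^n)$ is controlled by $\max_{k\leq n}L(k)$'' and then bound $P[\max L\geq M/2]$. This is false as a deterministic relation: for the path tree on $n$ vertices (each vertex has one child except the last), the Lukasiewicz walk is identically $0$ on $[[0,n-1]]$ while the height is $n-1$. The height of $V_k$ equals the number of weak ascending ladder times of $L$ in $[[0,k]]$, not $\max L$; bounding it requires the two-term estimate carried out in the proof of Lemma~\ref{lem-numofst} (large-deviation for the ladder-height sum \emph{and} for $\max L$). You can either reproduce that argument, or simply cite Lemma~\ref{lem-numofst} directly: its second assertion, together with a union bound over $k$, gives $P[\mathrm{height}(\Tbf^n)\geq M/2]\preceq\exp(-cM^2/n)$ in the needed range, which is what you want.

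A smaller point: your bound $P^{\mathrm{RW}}_x[Z(k)=y]\preceq N^{-d}$ for $k>M$ can fail when $M<N^2$ (i.e.\ when $\beta_2<4$), since before mixing one only has $\preceq k^{-d/2}$. This is harmless --- replacing $N^{-d}$ by the trivial bound $1$ still gives $(n^2/N^d)\,P[\mathrm{diam}\geq M]$, which is super-polynomially small once the diameter tail is established --- but the sentence as written is not correct.
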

\begin{proof}
Note that the left hand side is no greater than
\begin{align*}
 \sum_{0\leq i<j\leq n-1}&(P^{n,N}[X_i=x,X_j=y]+P^{n,N}[X_i=y,X_j=x])\\
&=2\sum_{0\leq i<j\leq n-1}P^{n,N}[X_i=x,X_j=y].
\end{align*}
Hence, we have
\begin{align*}
    \frac{N^d}{n}&P[\{x,y\}\subset\mathrm{Range}(\Snake^{n,N})]\preceq \frac{N^d}{n}\sum_{0\leq i<j\leq n-1}P^{n,N}[X_i=x,X_j=y]\\
    =&\frac{1}{n}\sum_{0\leq i<j\leq n-1}P^{n,N}_{i,x}[X_j=y]=\frac{1}{n}\sum_{0\leq i<j\leq n-1}\sum_{k}P^{n,N}_{i,x}[X_j=y,\mathbf{d}(V_i,V_j)=k]\\
    =&\frac{1}{n}\sum_{k}P[Z_x(k)=y]\sum_{0\leq i<j\leq n-1}P^{n,N}_{i,x}[\mathbf{d}(V_i,V_j)=k],
\end{align*}
where $P^{n,N},P^{n,N}_{i,x}$ are similar to the ones in Section~4, $\mathbf{d}(V_i,V_j)$ is the graph distance of the $i$-th vertex $V_i$ and the $j$-vertex $V_j$ in the corresponding tree $\Tbf^n$, and $Z_x=(Z_x(i))_{i\in \N}$ is a random walk in $\Torus_N$ with starting point $x$. From Lemma~\ref{J2}, we have
$$
\frac{1}{n}\sum_{0\leq i<j\leq n-1}P^{n,N}_{i,x}[\mathbf{d}(V_i,V_j)=k]\preceq k.
$$
and therefore
$$
\frac{N^d}{n}P[\{x,y\}\subset\mathrm{Range}(\Snake)]\leq \sum_k kP[Z_x(k)=y].
$$
Note that when $x\neq y\in\Z^d$,
\begin{equation}\label{kP}
   \sum_kP[Z_x(k)=y]\cdot k\preceq|x-y|^{4-d}.
\end{equation}
This is false for $x\neq y\in\Torus_N$ since when $x,y\in\Z^d$ with $\varphi(x)\neq\varphi(y)$,
$$
P[Z_{\varphi(x)}(k)=\varphi(y)]=\sum_{z\in\Z^d}P[Z_x(k)=y+Nz].
$$
However, similarly to the argument in the proof of Lemma~\ref{ll-l1}, one can use Lemma~\ref{large-range} to rule out those $z$ with norm bigger than $b=N^{\beta_2/4-1}(\log N)^{0.75+o(1)}$. The sum over the remaining $z$ has the desired bound: \\
(without loss of generality, assume that $y$ achieves $\min_{y':\varphi(y')=\varphi(y)}\{|x-y'|\}$)
\begin{align*}
&\sum_{z\in\Z^d:|z|\leq b}\sum_k kP[Z_x(k)=y+Nz]\asymp\sum_{z\in\Z^d:|z|\leq b}|x-y-Nz|^{4-d}\\
\asymp&|x-y|^{4-d}+\sum_{i=1}^b\frac{i^{d-1}}{(iN)^{d-4}}\asymp \frac{1}{(\dist(x,y))^{d-4}}+\frac{C(\epsilon)(\log N)^{3+\epsilon}}{N^{d-\beta}}.
\end{align*}
The proof is complete.
\end{proof}
\begin{proof}[Proof of Theorem~\ref{thm-cover}]
Write $a_0=\BCap(\{0\})$ and $\Snake=\Snake^{n,N}$. We start with the upper bound. Assume that $n(N)>(1+\epsilon)N^d \log N^d/a_0$. Let $T$ be the corresponding family tree in $\Snake$.  Apply Theorem~\ref{cutting} with $(d_0,e,\alpha,\beta,\delta,\epsilon)=(d,2,2.01,d-0.01,0.01,0.01)$. Then, by discarding an event with small probability, we can find subtrees $T_1,\dots,T_m$ as in the theorem. Write $l_1,\dots,l_m$ for their sizes. By Theorem~\ref{visit-small}, we have (when $N$ is large enough)
$$
P[x\in \Snake(T_i)]\geq (1-0.1\epsilon)\frac{l_i}{N^d}a_0,
$$
for any $x\in\Torus_N$.
Similarly to the argument in the last section, by Lemma~\ref{C3o}, one can get
$$
P[x\notin \cup_{i=1}^{m}\Snake(T_i)]\leq \prod_{i=1}^m P[x\notin \Snake(T_i)]+C\exp(-cN^{\alpha-2}).
$$
Note that (when $N$ is large)
\begin{multline*}
\prod_{i=1}^m P[x\notin \Snake(T_i)]\leq\prod_{i=1}^m(1-(1-0.1\epsilon)\frac{l_i}{N^d}a_0)\leq\exp(-(1-0.2\epsilon)\frac{\sum_{i=1}^ml_i}{N^d}a_0)\\
\leq \exp(-(1-0.3\epsilon)(1+\epsilon)\log (N^d))\leq \exp(-(1+0.6\epsilon)\log (N^d)).
\end{multline*}
Hence we have
$$
P[x\notin \cup_{i=1}^{m}\Snake(T_i)]\leq \frac{1}{N^{d(1+0.5\epsilon)}}.
$$
Since there are only $N^d$ vertices in $\Torus_N$, one can get $P[\mathrm{Range}(\Snake)\neq\Torus_N]\leq N^{-0.5\epsilon d}$. This finishes the proof of the upper bound.

We now turn to the lower bound. Fix some $\eta<\epsilon \wedge 0.01$. Apply Theorem~\ref{cutting} with
$$
(d_0,e,\alpha,\beta,\delta,\epsilon)=(d,2,2+\cc,d-\cc,\cc,\cc),
$$
where $\cc$ is a small number (in fact $9\cc<0.5$ is enough).
By discarding an event with small probability, we can assume that there exist subtrees $T_1,\dots,T_m$ as in the theorem. Write $l_1,\dots,l_m$ for their sizes and $o_1,\dots,o_m$ for their roots. We condition on the values of the subtree $\hat{T}$ (recall this notation in the theorem) and $\Snake|_{\hat{T}}$ (and the values of $l_1,\dots,l_m$). Note that conditionally on these vaules, $(T_1,\Snake|_{T_1}),\dots,(T_m,\Snake|_{T_m})$ are independent tree-indexed random walks with given sizes and starting points. By Proposition~\ref{visit-small-fixed} with $\beta_1=\beta-2\cc-o(1)$ and $\beta_2=\beta$ , we see that as long as $\gamma>(9\cc+4)/d$, \eqref{eq-visit-small-fixed} holds for $\Snake(T_i)$. Let $\gamma=4.5/d$. Then we have \eqref{eq-visit-small-fixed}.

We claim that we can pick up $\lfloor N^{(1-\eta)d}\rfloor$ vertices (write $F$ for the set of these vertices) in $\Torus_N$, such that the distance between any two vertices in $F$ is at least $N^{\eta}/8$, $F\cap(\Snake(\hat{T}))=\emptyset$ and $\dist(F,\Snake(\cup\{o_i\}))>N^{\gamma}$. Here is a way to do so. First we can find at least $4N^{(1-\eta)d}$ boxes with radius $N^{\eta}/8$ in $\Torus_N$ such that the distance between any two in them are at least $N^{\eta}/8$. Write $B_0$ for the set of these boxes. Note that for each $y\in A_1\doteq \Snake(\cup\{o_i\})$, there are at most $(32N^\gamma/N^{\eta})^d\asymp N^{4.5-d\eta}$ boxes in $B_0$ whose distance from $y$ is at most $N^\gamma$. Since there are only $m$ ($m\leq N^{4\cc}$) vertices in $A_1$, there are at least $3N^{(1-\eta)d}$ boxes in $B_0$ which are at least $N^\gamma$ away from $A_1$. On the other hand, note that there are at most $n/N^\cc\ll N^d$ vertices in $\Snake(\hat{T})$ and that a strictly positive proportion of vertices in $\Torus_N$ are covered by the remaining $3N^{(1-\eta)d}$ boxes. Hence, most of the $3N^{(1-\eta)d}$ boxes are not fully covered by $\Snake(\hat{T})$ and in each such box we can pick up a point not in $\Snake(\hat{T})$. In this way, we obtain $F$ with the desired properties.

We argue that with high probability, $F$ is not covered by $R\doteq\Snake(\cup_i^mT_i)$, and therefore $F$ is not covered by $\Snake(T)$. Then we finish the proof.

Without loss of generality, assume that $\sum_{i=1}^ml_i\sim (1-\epsilon)N^d\log N^d/a_0$. Write $U=\sum_{x\in F}1_{x\notin R}$. We need to show that it is positive with high probability. We manage to do so by using the Chebyshev inequality and hence need to estimate the expectation and the variance of $U$. First, note that (since $(\Snake(T_i))_{i=1,\dots,m}$ are independent)
$$
E[1_{x\notin R}]=P[x\notin \Snake(\cup_i^mT_i)]=\prod_i^mP[x\notin \Snake(T_i)]=\prod^m_i(1-P[x\in \Snake(T_i)]).
$$
Using \eqref{eq-visit-small-fixed}, we have
$$
\prod^m_i(1-P[x\in \Snake(T_i)])
=\exp(-\frac{(1+o(1))a_0\sum_i^m l_i}{N^d})=\frac{1}{N^{(1-\epsilon)d+o(1)}}.
$$
Hence, we obtain $E[U]=|F|/N^{(1-\epsilon)d+o(1)}=N^{d(\epsilon-\eta)+o(1)}$.

We now turn to the variance. Note that $\mathrm{Var}[U]=\sum_{x,y\in F}q_{x,y}\leq E[U]+\sum_{x\neq y\in F}q_{x,y}$ where $q_{x,y}=P[x,y\notin R]-P[x\notin R]P[y\notin R]$.
Note that, as before, by independence, we have
\begin{eqnarray*}
P[x,y\notin R]=\prod_i^mP[x,y\notin\Snake(T_i)],\\
P[x\notin R]P[y\notin R]=\prod_i^m P[x\notin\Snake(T_i)] P[y\notin\Snake(T_i)].
\end{eqnarray*}
On the other hand,
$$
P[x,y\notin\Snake(T_i)]=1-P[x\in\Snake(T_i)]-P[y\in\Snake(T_i)]+P[x,y\in\Snake(T_i)].
$$
We point out an inequality and show it later: for any $x\neq y\in F$
\begin{equation}\label{prob-xy}
\frac{N^d}{l_i}P[x,y\in\Snake(T_i)]\leq \frac{C}{N^c},
\end{equation}
where $C,c$ are constants which may depend on $\cc,\eta,\epsilon$.
Therefore, we have
\begin{align*}
\frac{P[x,y\notin R]}{P[x\notin R]P[y\notin R]}\leq&\prod_i^m \frac{1-P[x\in\Snake(T_i)]-P[y\in\Snake(T_i)]+P[x,y\in\Snake(T_i)]}{(1-P[x\in\Snake(T_i)])(1-P[y\in\Snake(T_i)])}\\
\leq& \prod_i^m(1+C\frac{l_i}{N^{d+c}})\leq 1+\frac{C}{N^c}.
\end{align*}
From this, we get $q_{x,y}\preceq P[x\notin R]P[y\notin R]/N^c$ and hence
$$
\sum_{x\neq y\in F}q_{x,y}\preceq (\sum_xP[x\notin R])^2/N^c=(E[U])^2N^{-c}.
$$
By the Chebyshev inequality, we have
$$
P[|U-E[U]|>\frac{E[U]}{2}]\leq\frac{4\mathrm{Var}[U]}{(E[U])^{2}}\preceq\frac{1}{N^{c/2}}.
$$
Hence, at least $E[U]/2$ vertices in $F$ are not covered by $\Snake(\cup_i^mT_i)$ with high probability.

We still need to show \eqref{prob-xy}. The argument is almost the same as the one in the proof of Proposition~\ref{visit-small-fixed} except that we need to use Lemma~\ref{range-xy} instead of Theorem~\ref{visit-small}. As in the proof of Proposition~\ref{visit-small-fixed}, (recall that we have verified that our $\gamma$ and the size $l_i$ satisfy the requirements in that proposition, see the paragraph after the proof of the upper bound) by discarding an event with probability less than $Cl_i/N^{d+c}$, we can find a subtree $T_0$ of $T_i$ as in the proof of proposition. Similarly, we have
$$
P[\Snake(T_i\setminus T_0)\cap\{x,y\}\neq\emptyset]\preceq \frac{l_i}{N^{d+c}}.
$$
For $\Snake(T_0)$, its starting point is very close to the uniform measure hence we can apply Lemma~\ref{range-xy} and obtain
$$
P[x,y\in \Snake(T_0)]\preceq \frac{l_i}{N^{d+c}}.
$$
Now \eqref{prob-xy} follows and the proof is complete.
\end{proof}

\section*{Acknowledgements}
This work was partially done when the author was a PhD student in the University of British Columbia. The author would like to thank Omer Angel and Bal\'{a}zs R\'{a}th for supervising his research and useful discussions.

\end{document}